\DeclareFontFamily{OT1}{pzc}{}
\DeclareFontShape{OT1}{pzc}{m}{it}{<-> [1.15] rpzcmi}{}
\DeclareMathAlphabet{\mathzc}{OT1}{pzc}{m}{it}
\newtheorem{thm}{Theorem}[subsection]
\newtheorem{lem}[thm]{Lemma}
\newtheorem{cor}[thm]{Corollary}
\newtheorem{prop}[thm]{Proposition}
\theoremstyle{definition}
\newtheorem{defn}[thm]{Definition}
\newtheorem{rem}[thm]{Remark}
\newtheorem{rems}[thm]{Remarks}
\numberwithin{equation}{subsection}
\numberwithin{thm}{section}
\newcommand{\op}{{\text{\rm op}}}
\newcommand{\wH}{{\widetilde \sH}}
\newcommand{\rJ}{{\sqrt{J}}}
\newcommand{\im}{\mbox{Im}}
\newcommand{\sH}{{\mathcal H}}
\newcommand{\sT}{{\mathcal T}}
\newcommand{\sZ}{{\mathcal{Z}}}
\newcommand{\Ext}{{\text{\rm Ext}}}
\newcommand{\htt}{\operatorname{ht}}
\newcommand{\id}{\operatorname{Id}}
\newcommand{\Amod}{A\mbox{--mod}}
\newcommand{\Hom}{\text{\rm Hom}}
\newcommand{\End}{\operatorname{End}}
\newcommand{\sO}{{\mathscr{O}}}
\newcommand{\modH}{\mbox{mod--}H}
\newcommand{\wT}{{\widetilde{\sT}}}
\newcommand{\Hmod}{H{\text{\rm --mod}}}
\def\sT{{\mathcal T}}
\def\sQ{{\mathscr Q}}
\def\sO{{\mathcal O}}
\newcommand{\nsZ}{{\mathcal Z}^\natural}
\newcommand{\Spec}{{\text{\rm Spec}}}
\begin{document}
 
\title[Local and global methods in representations of Hecke algebras]{Local and global methods in representations
of Hecke algebras}
   \author{Jie Du, Brian J. Parshall, and Leonard L. Scott}
 \address{School of Mathematics and Statistics\\University of New South Wales\\ UNSW Sydney 2052
}
 \email{j.du@unsw.edu.au {\text{\rm (Du)}}}

\address{Department of Mathematics \\
University of Virginia\\
Charlottesville, VA 22903} \email{bjp8w@virginia.edu {\text{\rm
(Parshall)}}}

\address{Department of Mathematics \\
University of Virginia\\
Charlottesville, VA 22903} \email{lls2l@virginia.edu {\text{\rm
(Scott)}}}

\thanks{This work was supported by a 2017 University of New South Wales Science Goldstar Grant (Jie Du), and the Simons Foundation (\#359360, Brian Parshall; \#359363, Leonard Scott).}
\subjclass{20C08, 20C33, 16S50, 16S80}
\keywords{quasi-hereditary algebra, stratified algebra, Hecke algebra, Schur algebra, left cell, endomorphism algebra, exact category, height one prime}

\begin{abstract} This paper aims  at developing a ``local--global"  approach for various types of finite
dimensional algebras, especially those related to Hecke algebras.  The eventual intention is to apply the methods 
and applications developed here to the cross-characteristic
representation theory of finite groups of Lie type.  The authors first review the notions of quasi-hereditary and stratified algebras over a Noetherian commutative ring. They prove that many global properties of these
algebras hold if and only if they hold locally at every prime ideal. When the commutative ring is sufficiently good, it is
often sufficient to check just the prime ideals of height at most one. These methods are applied to construct certain
generalized $q$-Schur algebras, proving they are often quasi-hereditary (the ``good" prime case) but always stratified. Finally, these results are used to prove  a
triangular decomposition matrix theorem for the modular representations of Hecke algebras at good primes. In
the bad prime case, the generalized $q$-Schur algebras are at least stratified, and a block triangular analogue
of the good prime case is proved, where the blocks correspond to Kazhdan-Lusztig cells. \end{abstract}

\maketitle
 
\tableofcontents
\section{Introduction.} The focus of this paper is the representation theory of generic Hecke algebras which
arise in the study of principal and related series of finite groups of Lie type in cross-characteristic. Actually, our
preference is for the broader context of algebras generalizing the Dipper-James notion of a
$q$-Schur algebra, on which we will (re)focus in a later paper. Nevertheless, the $q$-Schur algebra generalizations
we found recently in \cite{DPS15}, \cite{DPS17a} already have consequences for Hecke algebra theory, as the later sections of this paper show.

The $q$-Schur algebras of Dipper-James were originally used  to study representations
of $GL_n(q)$ in cross-characteristic. For some time,  these $q$-Schur algebras have been known to be quasi-hereditary, even over the ring
$\mathbb Z[t,t^{-1}]$ of integral Laurent polynomials (with $t^2=q$, an indeterminate). In the case of types besides $GL_n$,  
the use of quasi-hereditary algebras in cross-characteristic
theory, while a good starting  point, seems too restrictive, if one
is seeking a theory for all characteristics different from the defining characteristic $p$.
 
One approach involves weakening the notion of a quasi-hereditary
algebra $A$. In general, the definition of a quasi-hereditary
algebra depends on the existence of certain idempotent ideals
$J=AeA$ in $A$ and recursively defined factor algebras (replacing
A with A/J, and repeating). Here, $e^2=e$ and $eAe$ is required to
be a semisimple algebra. Then, the axioms imply the various module
categories $eAe$-mod form "strata" in $A$-mod, which collectively
``glue'' together to give all of $A$-mod, at a derived category
level. The notion of a standardly stratified algebra parallels that of a
quasi-hereditary algebra, but the condition that $eAe$ be semisimple
is not assumed. The categories $eAe$-mod still glue together to
give $A$-mod, as before. This provides a somewhat cruder picture of
$A$-mod, but one which is still quite useful. Stratified algebras
were first introduced in \cite{CPS96}, largely over fields, and then
a full study of their integral versions was begun in \cite{DPS98a}.
This later paper began a long-term project by the authors to apply
stratified algebras in cross-characteristic representation theory of finite
groups of Lie type. See \cite{DPS98a}, \cite{DPS98b}, \cite{DPS15},
\cite{DPS17a}, \cite{DPS17b}. See also \cite[\S 9]{CPS99} and \cite{BDK01}.

In particular, \cite{DPS98a} formulated a conjecture
providing the cross-characteristic representation theory of finite groups $G(q)$ of Lie
type with a kind of generalized $q$-Schur algebra $A^+$, defined
directly from the generic Hecke algebra $H$, of the same type as
$G$, over the ring $\sZ:=\mathbb Z[t,t^{-1}]$. The authors conjectured in \cite{DPS98a} that $A^+$ could be constructed
as an (integral) standardly stratified algebra, with strata described in terms
of Kazhdan-Lusztig cell theory. The conjecture was verified in that
paper for all rank 2 cases (some of which led to standardly stratified algebras
which were not quasi-hereditary). In \cite{DPS15}, the conjecture,
in a slightly modified form, was established, if $\sZ$ is replaced by
its localization at the prime ideal generated by a cyclotomic
polynomial $\Phi_{2e}(t)$ with $e\neq 2$. It was required also, to
be able to quote work of \cite{GGOR03}, to only treat the so-called
``equal parameter'' case. (The original conjecture itself, as well as
the modified version, allows unequal parameters. It just requires
that they appear in a Hecke algebra arising from the 
 BN-pair structure of a finite group of Lie type.)

In \S 2 of this paper we develop a theory for integral
quasi-hereditary algebras and height 1 prime ideals (in their base
rings) strong enough to deduce global results from such local
results as the above. It can also be used to 
deduce local results at maximal ideals from results at height 1 primes.  This is applied in
\S 5 to prove a  local ``triangular decomposition matrix'' theorem in
the spirit of Geck and Jacon \cite[Thm. 4.4.1]{GJ11} as well as a global version, giving, in particular, a new
way to think about their use of Lusztig's $a$-function. We use more general height functions
on quasi-posets arising in quasi-hereditary/stratified algebra 
theory.  See Theorem \ref{Theorem4.2} and Remark \ref{5rem}(b). Though this result and most of \S5 focus on quasi-hereditary algebras 
and ``good prime'' results, standardly stratified algebra methods from
\cite{DPS17a} and \cite{DPS17b} are useful.

The two latter papers  deal with $\sZ$-versions of $A^+$,  and also
with standardly stratified algebras (rather than just quasi-hereditary
algebras). In \S 6 we stick with standardly stratified algebras and
address the question of what can be said regarding  (analogs of) the triangularization theorem of Geck (see \cite[Thm
4.4.1]{GJ11}) for ``bad'' primes. We show that a similar formulation
can be obtained if the role of single ordinary irreducible
characters is suitably replaced by the characters of two sided Kazhdan-Lusztig cells.

Next, \S 3, omitted in the above discussion so far, gives a
candidate parallel treatment of the results in  \S 2, but in a
stratified algebra context.  Proposition \ref{proposition1}
may also be regarded as a useful part of this theory. These
results are largely not needed in the later sections \S\S\S 4,5,6. Also,
\S4 provides some technical results on Morita theory needed
later in \S5.

We mention that  Corollary 2.6 corrects (and provides
a generalization of) an old local-global result
\cite[Thm. 3.3(c)]{CPS90}.  
The use of height one primes in 
the statement and ``proof'' of  \cite[op.cit.]{CPS90}
was one of the inspirations for  our approach here.

Finally, we make a comment on terminology. A local commutative ring is, of course,
any commutative ring with a unique maximal ideal.  The main base rings used in
\cite{DPS15} are primarily local. The terminology for global rings is much less standard. 
Typical examples are $\mathbb Z$, $\mathbb Z[t]$, and also $\sZ:=\mathbb Z[t,t^{-1}]$, the latter used as the
main base ring in \cite{DPS17a}. In the present paper, we focus on $\sZ$ as well as the fraction
rings $\mathbb S^{-1}\sZ$, where $\mathbb S$ is a multiplicative monoid generated by an explicit (and small) finite set of nonzero
elements in $\sZ$. Here we regard these rings as global, and try to understand algebras over them in terms of localizations
$\sZ_{\mathfrak p}=(\mathbb S^{-1}\sZ)_\mathfrak p$ at prime ideals $\mathfrak p$ containing no element of $\mathbb S$, and
especially those $\mathfrak p$ of height $\leq 1$.

 \section{Localization of integral quasi-hereditary algebras (QHAs).}
Let $k$ be a commutative Noetherian ring (with 1). All algebras over $k$ are assumed to  be finitely generated as $k$-modules
(i.e., they are $k$-finite).
  For ${\mathfrak p}\in\Spec\,k$ and a $k$-module $X$, let
$$X({\mathfrak p}):=X_{\mathfrak p}/{\mathfrak p}X_{\mathfrak p}$$
be the resulting module for the residue field $k({\mathfrak p})$. The functor $X\mapsto X(\mathfrak p)=X\otimes_kk(\mathfrak p)$ from the category of $k$-modules to the category of $k(\mathfrak p)$-vector spaces is right exact.
If $X$ is a $k$-submodule of a $k$-algebra $A$, let $\overline {X({\mathfrak p})}$ be the image of the natural $k({\mathfrak p})$-map
$X({\mathfrak p})\longrightarrow A({\mathfrak p})$. In general, $A(\mathfrak p)$ is a finite dimensional $k(\mathfrak p)$-algebra, and, if $X$ is a (left, right, 2-sided) ideal in $A$, then $\overline{X(\mathfrak p)}$ is a (left, right, or 2-sided, respectively) ideal in $A(\mathfrak p)$.

\begin{defn}\label{defn1} Assume that the $k$-algebra $A$ is projective over $k$. An ideal $J$ in $A$ is called a {\it heredity ideal} provided the following conditions hold.

\begin{itemize}
\item[(0)] $A/J$ is projective over $k$.
\item[(1)] $J$ is a projective as a left $A$-module.
\item[(2)] $J^2=J$.
\item[(3)] The $k$-algebra $E:=\End_A({_AJ})$ is $k$-semisimple.
\end{itemize}
\end{defn}

The heredity ideal $J$ is of separable (resp., semisplit, split) type provided that $E$ is separable (resp., semisplit, split)
over $k$. Recall that a $k$-algebra $E$ is separable if the $(E,E)$-bimodule map $E\otimes_kE\to E$
is split. One says that $E$ is semisplit if it is a finite direct product of algebras, each of which is separable and has
center $k$ (i.e., each factor is an Azumaya algebra over $k$). If each factor is the endomorphism algebra of a finite projective
$k$-module, then $E$ is called split.\footnote{In particular, full matrix algebras over $k$ are split. This often occurs 
for quasi-hereditary algebras using integral standard modules, see \cite[Lem. 1.6]{DPS98b} and its proof.} 

 Semisimple algebras over commutative rings arise in the context of relative homological algebra.
Alternatively,  a (finitely generated) $k$-algebra
$E$ is $k$-semisimple if and only if the $k(\mathfrak p)$-algebras $E(\mathfrak p)$ are $k$-semisimple for all
$\mathfrak p\in \Spec\, k$. (See \cite[Thm. 2.1]{CPS90}. This implies, in particular, that any $k$-algebra
Morita equivalent to $E$ is $k$-semisimple. We will also need the facts that every separable $k$-algebra is $k$-semisimple, and that every  module projective over $k$ for a $k$-semisimple algebra $E$ is projective over $E$.  See \cite[pp. 133--135]{CPS90}.)

Most idempotent ideals $J$ dealt with in this paper have the form $J=AeA$ for 
an idempotent $e\in A$. Indeed, if the idempotent ideal
$J$ is projective as a left $A$-module, then
$A$ is Morita equivalent to an algebra $A'$ having the property that  the corresponding idempotent ideal $J'$ is
 generated by an idempotent $e'\in A'$. In fact, for some positive integer $n$, we can take $A'=M_n(A)$, so that
 $J'=M_n(J)=M_n(A)e'M_n(A)$ for some idempotent $e'\in M_n(A)$. (See \cite[Rem. 1.4(b)]{CPS90} for more details.)

The following proposition is new in an integral context. Note that properties (0), (3) in the definition of a heredity
ideal are not used.

\begin{prop}\label{proposition1} Suppose $A$ is an $k$-algebra which is projective over $k$,  and $J$ is an ideal in $A$ satisfying conditions
(1) and (2) of Definition \ref{defn1}, that is, $J=J^2$ is projective as a left $A$-module. Then the following statements
hold.

(a) $E:=\End_A({_AJ})$ is projective over $k$.

(b) If $J=AeA$ for some idempotent $e\in A$, then $E$ is Morita equivalent to $eAe$.

(c) If $J=AeA$ for some idempotent $e\in A$, then $eA$ is a projective left $eAe$-module. Also, the natural multiplication map $Ae\otimes_{eAe}eA\longrightarrow J$ is
bijective, even at the derived category level, i.e., $Ae\otimes^{\mathbb L}_{eAe}eA\overset\sim\longrightarrow J$.
\end{prop}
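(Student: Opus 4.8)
The plan is to first establish that $eA$ is projective as a left $eAe$-module, then deduce the bijectivity of the multiplication map $Ae\otimes_{eAe}eA\to J$, and finally upgrade this to a derived-category isomorphism by showing all higher $\operatorname{Tor}$ terms vanish. For the first step, I would use part (b): since $E=\End_A({}_AJ)$ is projective over $k$ by part (a), and $E$ is Morita equivalent to $eAe$ (by (b)), the algebra $eAe$ is projective over $k$ as well. Now ${}_AJ$ is projective as a left $A$-module by hypothesis (1), so $eJ = eAeA$ is a direct summand of a direct sum of copies of $eA$ as a left $eAe$-module; but $eJ = eA$ (since $eA = eAeA \cdot$ ... more precisely $eA \subseteq eAeA = eJ \subseteq eA$ because $e\in AeA = J$ forces $eA = eAeA$). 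Actually the cleaner route: ${}_AJ$ projective means $J$ is a summand of $A^n$ for some $n$; applying the exact functor $e(-)$ gives $eJ = eA$ as a summand of $(eA)^n$, hence $eA$ is projective over ... wait, that shows $eA$ projective as a left module over $A$ restricted — let me instead apply $\operatorname{Hom}_A(Ae, -) = e(-)$ which is exact and sends left $A$-modules to left $eAe$-modules; since $Ae$ is a summand of $A^m$ (as $e$ is idempotent), $e(-)$ preserves projectives in the appropriate sense, but the direct argument is: $eJ$ is a summand of $e(A^n) = (eA)^n$ as left $eAe$-modules, and $eJ = eAeA = eA$, so $eA$ is $eAe$-projective.

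For bijectivity of $Ae\otimes_{eAe}eA\to J$, I would combine left-module projectivity of ${}_AJ$ with the $eAe$-projectivity of $eA$ just established. The standard approach (as in Cline–Parshall–Scott's treatment of heredity ideals) is: the map is always surjective since $J = AeA$; for injectivity, tensor the defining exact sequence and use that $J$ is $A$-projective to split things, or use that applying $e(-)$ to the map $Ae\otimes_{eAe}eA\to J$ yields $eAe\otimes_{eAe}eA = eA \to eJ = eA$, the identity, which is an isomorphism, and then bootstrap using projectivity of ${}_AJ$ to conclude the original map is an isomorphism (a projective left $A$-module $P$ with $eP$ determining $P$ via $P \cong Ae\otimes_{eAe}eP$ — this is exactly the Morita-type reconstruction that holds because $J$, being $A$-projective and contained in $AeA$, lies in the image of $Ae\otimes_{eAe}(-)$).

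For the derived statement $Ae\otimes^{\mathbb L}_{eAe}eA\xrightarrow{\sim} J$, it suffices to show $\operatorname{Tor}^{eAe}_i(Ae, eA) = 0$ for $i>0$. Since $eA$ is projective as a \emph{left} $eAe$-module by the first step, and $\operatorname{Tor}$ can be computed from either variable, the higher Tor vanishes immediately — this is the payoff of having proved $eA$ projective over $eAe$ first. Then the complex $Ae\otimes^{\mathbb L}_{eAe}eA$ is concentrated in degree $0$ and equals $Ae\otimes_{eAe}eA \cong J$ by the bijectivity already shown.

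The main obstacle I anticipate is the reconstruction step in the middle: carefully justifying that the $A$-projectivity of $J$ (hypothesis (1)), together with the identification of $eJ$ with $eA$, forces the natural map $Ae\otimes_{eAe}eA\to J$ to be an isomorphism and not merely a surjection. This is where one genuinely uses that ${}_AJ$ is projective rather than just idempotent-generated — one wants to say that the counit of the adjunction $(Ae\otimes_{eAe}(-), e(-))$ is an isomorphism on $J$, which holds because $J$ is a direct summand of a direct sum of copies of $Ae$ (the latter being true since ${}_AJ$ is projective and $J = AeA$ implies every element of $J$ is an $A$-combination of elements of $Ae$, giving a surjection $Ae^{(I)}\twoheadrightarrow J$ that splits). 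Once $J$ is realized as a summand of $Ae^{(I)}$, naturality of the counit and its being an isomorphism on $Ae$ itself finishes the argument. I would also double-check the finiteness bookkeeping ($k$-finiteness of all algebras, so all the relevant index sets can be taken finite), though this is routine given the blanket hypotheses at the start of §2.
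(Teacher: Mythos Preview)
Your argument for the projectivity of $eA$ over $eAe$ is circular as written. You deduce that $eJ=eA$ is a direct summand of $(eA)^n$ by applying $e(-)$ to a splitting $J\hookrightarrow A^n$, but this is vacuous: $eA$ is trivially a summand of $(eA)^n$, and $(eA)^n$ is not known to be $eAe$-projective --- that is precisely the statement in question. Likewise, the assertion that ``$e(-)$ preserves projectives in the appropriate sense'' (because $Ae$ is a summand of $A$) is equivalent to $eA$ being $eAe$-projective, so it cannot be invoked here.

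The repair is already present in your proposal, just in the wrong place. In your bijectivity paragraph you correctly observe that $J$, being left $A$-projective and generated as a left $A$-module by $Ae$, is a direct summand of $(Ae)^n$ for some finite $n$. Apply $e(-)$ to \emph{that} decomposition instead: one gets $eJ=eA$ as a summand of $(eAe)^n$, which is free over $eAe$, and projectivity follows at once. The paper takes an equivalent route, realizing $J$ as a summand of $Ae\otimes_k eA$ (via the multiplication surjection $Ae\otimes_k eA\twoheadrightarrow J$ and projectivity of ${}_AJ$), so that $eJ$ becomes a summand of $eAe\otimes_k eA$, which is $eAe$-projective because $eA$ is $k$-projective. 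Your counit-of-adjunction argument for bijectivity and your $\operatorname{Tor}$-vanishing argument for the derived isomorphism are both correct and essentially the same as the paper's; the paper phrases the bijectivity step as an explicit splitting computation (split off the kernel $N$ of $\mu$, show $eN=0$, and observe that $e(Ae\otimes_{eAe}eA)$ generates $Ae\otimes_{eAe}eA$ as a left $A$-module), which amounts to the same naturality-on-a-retract reasoning you give.
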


\begin{proof} (a) Actually, this holds provided $J$ is any  left ideal in $A$ which is projective as a left $A$-module. In fact, in that case
there is a left $A$-module $X$ so that $Y:=J\oplus X\cong ({_AA})^{\oplus n}$ is a free $A$-module. Then
$\End_A({_AJ})$ is a direct summand of $\End_A(Y)$. The latter identifies with $n\times n$ matrices over
$A^{\op}$ and hence is projective over $k$. This proves (a).

To prove (b), note that for each $x\in A$, the $A$-submodule $Aex$ of $_AJ$ is a homomorphic image of $Ae$ (via
the identity map $Ae\longrightarrow Ae$ composed with right multiplication by $x$). Clearly, $AeA$ is a sum of finitely
many such submodules $Aex$, $x\in A$, since $k$ and $A$ are left Noetherian. Thus, $J$ is a homomorphic
image of the left $A$-module $(Ae)^{\oplus n}$ for some positive integer $n$. Thus, since $_AJ$ is projective,
it is a direct summand of $(Ae)^{\oplus n}$. Also, $Ae$ is a direct summand of $J$, viz., $J=Ae\oplus J(1-e)$.
Of course, the module $N:= J(1-e)$ is also a homomorphic image and direct summand of $(Ae)^{\oplus n}$.

Thus, letting $m=n+1$, $Ae=M$ and $N=J(1-e)$, we fulfill the hypothesis of the following lemma.

\begin{lem}
 \label{badlemma} Let $M,N$ be finitely generated (left) modules for a $k$-algebra $A$, and let $m$ be a positive integer. Suppose
there is a split $A$-module epimorphism $\pi:M^{\oplus (m-1)}\twoheadrightarrow N$. Then $\End_A(M)$ and
$\End_A(M^{\oplus m})$ are each Morita equivalent to $\End_A(M\oplus N)$.  \end{lem}

\begin{proof} 
 Of course, $\End_A(M)$ and $\End_A(M^{\oplus m})$ are trivially Morita equivalent,
so it suffices to show that $\End_A(M^{\oplus m})$ and $\End_A(M\oplus N)$ are Moirta equivalent. We will use the fact that if
$f$ is an idempotent in an algebra $B$, then $B$ is Morita equivalent to the algebra $fBf$ whenever
$BfB=B$ (i.e., $Bf$ is a progenerator of $B$-mod).
In our case, we let $B:=\End_A(M^{\oplus m})$.

  Let
$f'\in \Hom_A(M^{\oplus m}, M\oplus N)\cong\Hom_A(M, M)\oplus\Hom_A(M^{\oplus (m-1)},N)$ be given by
 $f':=\id_M\oplus \pi$.  Here we have written $M^{\oplus m}=M\oplus M^{\oplus(m-1)}$. This distinguishes a
 ``first summand'' $M=M_1$. Then
 put $f=(\id_M\oplus\sigma)\circ f'$, where $\sigma$ is a fixed splitting of the projection $\pi:M^{\oplus (m-1)}\longrightarrow
 N$. It is clear that $f$ is an idempotent in $B$.

\smallskip\noindent
{\bf Claim}: $BfB=B$.

Write $M^{\oplus m}=M_1\oplus\cdots\oplus M_m$, where each $M_i=M$ concentrated in position $i$. Let $b\in B$.
To show $b\in BfB$, it suffices to consider the case where $b|_{M_i}=0$ for all but one index  $i$, call it $j$.
 Without loss, $j=1$.    (If $bu\in BfB$, where $u$ is a unit in $B$, then $b\in BfB$. Choose $u$ to be a unit interchanging
 $M_j$ and $M_1$, and fixing the other summands $M_i.$)
 Thus, $b=b\pi_1$, where $\pi_1:M^{\oplus m}
\to M^{\oplus m}$ is defined by $(x_1,\cdots , x_m)\mapsto (x_1,0,\cdots, 0)$. Since $\pi_1=f\pi_1$,  we have
that $b=b\pi_1=bf\pi_1
\in BfB$, proving the Claim, and then the lemma. \end{proof}

Part (b) of the Proposition follows.

Finally, we prove (c). There is an evident surjection $Ae\otimes_keA\twoheadrightarrow J$ of left
$A$-modules. Since $_AJ$ is projective, $_AJ$ is a direct summand of $Ae\otimes_keA$. Thus,
$eJ$ is a direct summand of $eAe\otimes_keA$ in $eAe$-mod.  Thus, $eJ$ is a projective $eAe$-module.
Next,
$$eA\subseteq (eAe)A\subseteq eJ\subseteq eA,$$
so $eJ=eA$. Thus, $eA$ is a projective left $eAe$-module.

In particular, $Ae\otimes_{eAe}eA\cong A\otimes^{\mathbb L}_{eAe}eA$, since $-\otimes_{eAe}eA$ is
  exact as, say,
a functor from $A\otimes_k(eAe)^\op$--mod  to $(A\otimes_kA^\op)$--mod. To complete the proof of (c), it remains to
show that the multiplication map $Ae\otimes_{eAe}eA\overset\mu\longrightarrow J$ is bijective. It is clearly
surjective, hence split as a map of left $A$-modules. Let $N$ be the kernel of $\mu$, and let $J'\subseteq Ae\otimes_{eAe}eA$
be a left $A$-submodule complementary to $N$ (a section of $\mu$). Then $e(Ae\otimes_{eAe}eA)\cong
e((1-e)Ae\otimes_{eAe}eA)\oplus eAe\otimes_{eAe} eA\cong eAe\otimes_{eAe}eA\cong eA.$ However,
$eJ'\cong eJ$ through the bijection $\mu|_{J'}$, while $eJ=eA$, as shown above. Hence, $eN=0$. In more detail:
$Ae\otimes_{eAe}eA=J'\oplus N$, so $e(Ae\otimes_{eAe}Ae)= eJ'\oplus eN\cong eA\oplus eN\cong e(Ae\otimes_{eAe}eA)
\oplus eN$ as $k$-modules.

Thus, the set $e(Ae\otimes_{eAe}eA)$, which clearly generates the left $A$-module $Ae\otimes_{eAe}eA$,
is contained in $J'$. Thus, $J'$ is all of $Ae\otimes_{eAe}eA$, forcing $N=0$. This finishes the proof of (c).
\end{proof}

 \begin{rems}\label{rem2.3} (a) The argument for part (c) of the above proposition was inspired by a parallel argument in the field case given in \cite[\S2.1]{CPS96}.

 (b) It is well-known \cite{CPS88} that quasi-hereditary algebras are ``left quasi-hereditary'' if and only if they
 are ``right quasi-hereditary.'' Looking ahead to standardly stratified algebras in \S3 below, \cite[p.42]{CPS96} provides examples
 where this left-right symmetry fails for standardly stratified algebras. However, (c) can be used to show that, in the stratified case,
 $A^{\op}$ does satisfy the full embedding condition \eqref{coho} below.  (In \cite[p. 180]{DPS98a}, this full embedding
 condition was taken as [a weaker version] of the notion of a standardly stratified algebras, while the notion used here was called
 a standardly stratified algebra.)
\end{rems}

 In the following result, let $k$ be a Noetherian integrally closed domain.  Let $A$ be a (finite) $k$-algebra which is
 projective over $k$. Let
 $K$ be the fraction field of $k$. If $J$ is an ideal in $A$, let $\sqrt{J}:=J_K\cap A$.
Also, recall that, if $\mathfrak p\in \Spec\,k$, then $\overline{J(\mathfrak p)}$ denotes the image of the natural map $J(\mathfrak p)\to A(\mathfrak p)$.

Now we have
\begin{lem}\label{thm1} Maintain the notation introduced above.
Let $J=AeA$ be an idempotent ideal of $A$ generated by an idempotent $e$. Assume that $A/\rJ$ is a projective $k$-module. Also,  for each ${\mathfrak p}\in\Spec\,k$ of height $\leq 1$,
assume that $\overline {J({\mathfrak p})}$ is a heredity ideal in $A({\mathfrak p})$ such that
$\End_{A({\mathfrak p})}({_{A({\mathfrak p})}}\overline{J({\mathfrak p})})$ is isomorphic to a direct product of central simple algebras over $k({\mathfrak p})$.
Then

(a) the map $J({\mathfrak p})\longrightarrow\overline{ J({\mathfrak p})}$ is bijective (and, thus,  an isomorphism of $(A({\mathfrak p}), A(\mathfrak p))$-bimodules);

(b) $J=\rJ$, and, furthermore, $J$ is a heredity ideal in $A$. In addition, $\End_A({_AJ})$ is semisplit (i.e., it is a direct product of
Azumaya algebras).
\end{lem}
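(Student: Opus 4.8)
The plan is to prove (a) and the local content of (b) one prime at a time, for primes of height $\le 1$, and then globalize using that $k$ is normal. At a height-$0$ prime $\mathfrak p=(0)$ everything is immediate: flat base change along $k\to K$ identifies $J(\mathfrak p)=J_K$ with its image $\overline{J(\mathfrak p)}\subseteq A_K$, so (a) holds, and the height-$0$ instance of the hypothesis says $J_K=\overline{J(0)}$ is a heredity ideal of $A_K=A(0)$, so the multiplication $A_Ke\otimes_{eA_Ke}eA_K\to J_K$ is bijective.

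The heart of the argument is the case $\htt\,\mathfrak p=1$, where $R:=k_{\mathfrak p}$ is a DVR with residue field $F:=k(\mathfrak p)$ and fraction field $K$. Here $A_{\mathfrak p}$ is $R$-free, and $J_{\mathfrak p}=A_{\mathfrak p}eA_{\mathfrak p}$, being an $R$-submodule of the free module $A_{\mathfrak p}$, is $R$-free of rank $r:=\dim_KJ_K$. I would introduce $P:=A_{\mathfrak p}e\otimes_{eA_{\mathfrak p}e}eA_{\mathfrak p}$ with its surjective multiplication map $\mu\colon P\twoheadrightarrow J_{\mathfrak p}$ and compute the two fibres of $P$. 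Base change of $\otimes$ gives $P\otimes_RK\cong A_Ke\otimes_{eA_Ke}eA_K$ and $P\otimes_RF\cong A(\mathfrak p)e(\mathfrak p)\otimes_{e(\mathfrak p)A(\mathfrak p)e(\mathfrak p)}e(\mathfrak p)A(\mathfrak p)$; since $J_K$ is a heredity ideal of $A_K$ and $\overline{J(\mathfrak p)}=A(\mathfrak p)e(\mathfrak p)A(\mathfrak p)$ is a heredity ideal of $A(\mathfrak p)$, the respective multiplication maps are bijective, so $P\otimes_RK\cong J_K$ and $P\otimes_RF\cong\overline{J(\mathfrak p)}$. Now a dimension count closes the circle. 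On one hand $\overline{J(\mathfrak p)}$ is the image of the surjection $J_{\mathfrak p}\otimes_RF\twoheadrightarrow A(\mathfrak p)$ out of an $F$-space of dimension $r$, so $\dim_F\overline{J(\mathfrak p)}\le r$. On the other hand, for any finitely generated $R$-module $\dim_F(P\otimes_RF)\ge\dim_K(P\otimes_RK)=r$. Hence $r\le\dim_F(P\otimes_RF)=\dim_F\overline{J(\mathfrak p)}\le r$; the equality of generic and closed fibre dimensions forces $P$ to be $R$-free, while $\mu\otimes_RK$ being an isomorphism makes $\ker\mu$ an $R$-torsion module, so comparing dimensions in $0\to\ker\mu\to P\to J_{\mathfrak p}\to0$ reduced mod $\mathfrak p$ (using $\operatorname{Tor}^R_1(J_{\mathfrak p},F)=0$) gives $\ker\mu\otimes_RF=0$, hence $\ker\mu=0$ by Nakayama. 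Thus $\mu$ is an isomorphism, and then $J_{\mathfrak p}\otimes_RF\twoheadrightarrow\overline{J(\mathfrak p)}$ is a surjection of $F$-spaces both of dimension $r$, hence bijective: this is (a).

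From (a), $J_{\mathfrak p}\otimes_RF\hookrightarrow A(\mathfrak p)$, so reducing $0\to J_{\mathfrak p}\to A_{\mathfrak p}\to A_{\mathfrak p}/J_{\mathfrak p}\to0$ mod $\mathfrak p$ shows $\operatorname{Tor}^R_1(A_{\mathfrak p}/J_{\mathfrak p},F)=0$; hence $A_{\mathfrak p}/J_{\mathfrak p}$ is $R$-free, its torsion submodule $\rJ_{\mathfrak p}/J_{\mathfrak p}$ is zero, and $J_{\mathfrak p}=\rJ_{\mathfrak p}=J_K\cap A_{\mathfrak p}$. Also $J_{\mathfrak p}\otimes_RF\cong\overline{J(\mathfrak p)}$ is $A(\mathfrak p)$-projective (heredity) and $J_{\mathfrak p}$ is $R$-free, so the usual lifting of projectivity over the local ring $R$ shows $J_{\mathfrak p}$ is projective as a left $A_{\mathfrak p}$-module; by Proposition \ref{proposition1}(a),(b) then $\End_{A_{\mathfrak p}}(J_{\mathfrak p})$ is $R$-projective, Morita equivalent to $eA_{\mathfrak p}e$, with reductions $\End_{A(\mathfrak p)}(\overline{J(\mathfrak p)})$ and $\End_{A_K}(J_K)$ that are products of central simple algebras over their fields, so $\End_{A_{\mathfrak p}}(J_{\mathfrak p})$ is a product of Azumaya $R$-algebras. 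To globalize, I would show $J$ is reflexive as a $k$-module. Granting that, normality of $k$ gives $J=\bigcap_{\htt\,\mathfrak p=1}J_{\mathfrak p}=\bigcap_{\htt\,\mathfrak p=1}\rJ_{\mathfrak p}=\rJ$ (here $\rJ$ is a $k$-summand of $A$, since $A/\rJ$ is $k$-projective by hypothesis, hence reflexive and recovered from its codimension-$1$ localizations); then $A/J=A/\rJ$ is $k$-projective, $J^2=J$ is automatic for $J=AeA$, $J$ is $A$-projective (projectivity checked prime by prime using the local result and reflexivity), $E:=\End_A(J)$ is $k$-projective by Proposition \ref{proposition1}(a), and---since $J=\rJ$ makes $0\to J\to A\to A/J\to0$ split over $k$---$E(\mathfrak p)\cong\End_{A(\mathfrak p)}(\overline{J(\mathfrak p)})$ for all $\mathfrak p$; finally $E$ is $k$-semisimple and semisplit because it is so in codimension $\le1$ and one propagates the Azumaya/semisimplicity statement over the normal base $k$.

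The main obstacle---and the only nonroutine point beyond the DVR bookkeeping---is exactly this globalization: deducing that $J$ is reflexive (equivalently $J=\rJ$, equivalently $A/J$ is $k$-projective) and that $E$ is $k$-semisimple and semisplit, from information only at primes of height $\le1$. This is the delicate step mishandled in \cite[Thm. 3.3(c)]{CPS90}; it genuinely uses that $k$ is integrally closed (so reflexive modules and Azumaya classes are controlled in codimension one), the full force of the ``central simple over $k(\mathfrak p)$'' hypothesis at height-one primes rather than mere heredity, and the hypothesis that $A/\rJ$---not just $A$---is $k$-projective.
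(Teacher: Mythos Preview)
Your local analysis at primes of height $\le 1$ is correct, and in fact your proof of (a) can be shortened to match the paper's: since $\overline{J(\mathfrak p)}$ is a heredity ideal, $\overline{J(\mathfrak p)}\cong A(\mathfrak p)e\otimes_{eA(\mathfrak p)e}eA(\mathfrak p)$, and the reduction of the multiplication map $Ae\otimes_{eAe}eA\twoheadrightarrow J$ gives a surjection $\overline{J(\mathfrak p)}\twoheadrightarrow J(\mathfrak p)$; combined with the defining surjection $J(\mathfrak p)\twoheadrightarrow\overline{J(\mathfrak p)}$ this forces equality, with no need for your freeness-of-$P$ bookkeeping.

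The real issue is your globalization step. You write ``I would show $J$ is reflexive as a $k$-module. Granting that\ldots'' and then deduce $J=\bigcap_{\htt\mathfrak p=1}J_{\mathfrak p}=\sqrt J$. But you never prove reflexivity of $J$, and the only route you suggest toward it---via $J=\sqrt J$ and the $k$-projectivity of $A/\sqrt J$---is exactly what you are trying to conclude, so the argument is circular. Similarly, ``$J$ is $A$-projective (projectivity checked prime by prime using the local result and reflexivity)'' is not a valid principle at height $\le 1$ primes alone. You correctly diagnose that this is the delicate point, but you have not supplied the missing idea.

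The paper fills this gap by reversing your order of operations: it applies the codimension-one Azumaya propagation result \cite[Prop.~2.3]{CPS90} not to $E=\End_A(J)$ at the end, but to the \emph{known} $k$-projective algebra $eAe$ at the start. By Proposition~\ref{proposition1}(b) over each $k(\mathfrak p)$, the central-simple hypothesis on $\End_{A(\mathfrak p)}(\overline{J(\mathfrak p)})$ transfers to $(eAe)(\mathfrak p)$, and then \cite[Prop.~2.3]{CPS90} gives that $eAe$ itself is a product of Azumaya algebras over $k$, hence separable. Separability of $eAe$ makes the $k$-projective module $eA$ automatically $eAe$-projective; this in turn forces $Ae\otimes_{eAe}eA$ to embed in its $K$-base change and hence to be isomorphic to $J$ via multiplication. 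Now $J$ is globally $A$-projective (and $k$-projective, hence reflexive) with no circularity, and $J=\bigcap_{\htt\mathfrak p=1}J_{\mathfrak p}=\sqrt J$ follows. The semisplitness of $\End_A(J)$ is then immediate from Proposition~\ref{proposition1}(b) and the Azumaya structure already established on $eAe$.
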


\begin{proof} Of course, (a) is a consequence of (b), but we need to prove (a) first.

By the Peirce decomposition, $eAe$ is a direct summand of $A$ as a $k$-module. Thus, $eAe$ is itself
projective as a $k$-module. By hypothesis, $J(0)=\overline {J(0)}$ is a heredity ideal such that $\End_{A(0)}({_{A(0)}J(0))}$
is separable over $K$. Therefore, $eA(0)e=(eAe)(0)$ is also a separable algebra since the multiplication map $A(0)e\otimes_{eA(0)e}eA(0)\longrightarrow J(0)$ is an isomorphism.

Let $\mathfrak p$ be a prime ideal of $k$ having height 1. Because $\overline {J({\mathfrak p})}$ is a heredity ideal, $\overline {J({\mathfrak p})}\cong
A({\mathfrak p})\otimes_{eA({\mathfrak p})e}eA({\mathfrak p})$. By the universal property of tensor products,
there is a natural surjective map $\overline{ J({\mathfrak p})}\to J({\mathfrak p})$. Thus, $\dim \overline {J{(\mathfrak p})}
\geq\dim J({\mathfrak p})$. By definition, $\overline{J(\mathfrak p)}$ is an image of $J(\mathfrak p)$. We conclude
the surjection $\overline{J(\mathfrak p)}\to J(\mathfrak p)$ is an isomorphism, as is the defining surjection
$J(\mathfrak p)\twoheadrightarrow\overline{J(\mathfrak p)}$. This proves (a).

It follows also that $A_{\mathfrak p}/J_{\mathfrak p}$ is torsion free over the discrete valuation ring (DVR) $k_{\mathfrak p}$ \cite[Thm. 4.25]{Reiner75} (or \cite[Thm. 7.13]{Bass68}) , so that
$\sqrt{J_{\mathfrak p}}=J_{\mathfrak p}$. Also, $(\sqrt{J}/J)_{\mathfrak p}\subseteq (A/J)_{\mathfrak p}$ which is torsion
free, so $(\sqrt{J}/J)_{\mathfrak p}=0$. Thus,
\begin{equation}\label{fact} (\sqrt{J})_{\mathfrak p}=J_{\mathfrak p}.\end{equation}

% Observe that the free $k_{\mathfrak p}$-module $eA_{\mathfrak p}e$ has rank equal to the dimension of $eA({\mathfrak %p})e$
%and to the dimension of $eA(0)e=(eAe)_K$.

Continuing with $\mathfrak p$ as above, the algebra $E':=
    \End_{A(\mathfrak p)}({_{A(\mathfrak p)}J}(\mathfrak p))$ is, by hypothesis, a direct product of central simple
algebras over $k(\mathfrak p)$.  Since $\overline{J(\mathfrak p)}$ is, also by hypothesis, a heredity ideal in
$A(\mathfrak p)$, $eA(\mathfrak p)e$ is, by Proposition \ref{proposition1}(b) above, applied over $k(\mathfrak p)$,  Morita equivalent to $E'$. Thus, $(eAe)(\mathfrak p) \cong eA(\mathfrak p)e$
is also a direct product of central simple algebras over $k(\mathfrak p)$. This statement holds for all prime ideals
$\mathfrak p$ of height $\leq 1$. By \cite[Prop. 2.3]{CPS90} (which uses the
separability results
\cite[Prop. 4.6]{AG60b}),
the algebra $eAe$ is a direct product of Azumaya algebras over $k$, and, in particular, it is separable over $k$.
 In particular, since $eA$ is a projective $k$-module, it is a projective
$eAe$-module.  It follows that $Ae\otimes_{eAe}eA\subseteq (Ae\otimes_{eAe}eA)_K$. However, we have
that a surjection $Ae\otimes_{eAe}eA\to AeA$. Composing with the inclusion $AeA\subseteq (AeA)_K
\cong (Ae\otimes_{eAe} eA)_K$, we obtain the previous inclusion.
It follows that $_AJ\cong {_AA}e\otimes_{eAe}eA$, so $J$ is projective as a left $A$-module.
Applying Proposition \ref{proposition1}(b) again, we obtain that $\End_A({_AJ})$ is semisplit over $k$.

To complete the proof, it suffices to show that $\rJ=J$. Observe that $J$ is projective over $k$, since it is a direct
summand of $Ae\otimes_{eAe}(eAe)^{\oplus r}$ for some $r$. Thus, $J=\bigcap_{\htt(\mathfrak p)=1}J_{\mathfrak p}.$
However, for each $\mathfrak p\in\Spec\, k$ of height 1,
$$J_{\mathfrak p}=(\sqrt {J})_{\mathfrak p}\supseteq\sqrt{J}\supseteq J$$
using (\ref{fact}).
Hence,
$J=\bigcap_{{\htt(\mathfrak p)})=1}J_{\mathfrak p}$ must equal $\sqrt{J}$. Here we use \cite[Thm. 4.25]{Reiner75} and
$k$-projectivity of $J$. (See also \cite[Prop. 7.13]{Bass68}.)
 \end{proof}

 \begin{thm}\label{BigTheorem} Assume that $A$ is a finite $k$-algebra which is projective over a commutative Noetherian
 ring $k$. Let $J$ be an idempotent ideal in $A$.

 (a) For any given $\mathfrak p\in\Spec\,k$, if $\overline{J(\mathfrak p)}$ is a heredity ideal in
 $A(\mathfrak p)$, then $J(\mathfrak p)\cong\overline{J(\mathfrak p)}$, so that $J(\mathfrak p)$ identifies with
 an ideal in $A(\mathfrak p)$.
 Moreover,  $J$ is a heredity ideal if and only if for each $\mathfrak p\in\Spec\, k$, $\overline{J(\mathfrak p)}$ is a heredity ideal
 in $A(\mathfrak p)$.

 (b) $J$ is a heredity ideal of separable type if and only if, for every maximal ideal $\mathfrak m$ in $k$, 
 $\overline{J(\mathfrak m)}$ is a heredity ideal in $A(\mathfrak m)$ of
 separable type.

 (c) Now assume that $k$ is an integrally closed domain and $A/J$ is projective over $k$. Then $J$ is a heredity ideal of semisplit 
 type if and only if $\overline{J(\mathfrak p)}$ is a semisplit  heredity ideal in $A(\mathfrak p)$ for every prime ideal $\mathfrak p$ of height $\leq 1$.
 
 (d) Assume that $k$ is a regular domain of dimension at most 2, and that $A/J$ is projective over $k$.
    Then $J$ is a heredity ideal of split 
 type if and only if $\overline{J(\mathfrak p)}$ is a heredity ideal of split type in $A(\mathfrak p)$ for every prime ideal $\mathfrak p$ of height $\leq 1$.
 
   \end{thm}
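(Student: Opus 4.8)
The plan is to prove (a) in full and to bootstrap (b), (c), (d) from it; in every part the ``only if'' implication is routine base change, while ``if'' combines a fibrewise criterion with one structural input. The ``only if'' in (a) is immediate: if $J$ is heredity then $A/J$ is $k$-projective, so $0\to J\to A\to A/J\to 0$ stays exact after $-\otimes_kk(\mathfrak p)$, giving $J(\mathfrak p)\overset\sim\to\overline{J(\mathfrak p)}$; conditions (2) and (1) base-change directly, and since $_AJ$ is a summand of a free module, comparing the $\Hom$-blocks of $\End_A(A^{\oplus n})$ before and after base change gives $E\otimes_kk(\mathfrak p)\cong\End_{A(\mathfrak p)}(J(\mathfrak p))$, which is semisimple because $E$ is. The one genuinely new point is the first assertion of (a), and here is the argument I would use. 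Assume $\overline{J(\mathfrak p)}$ is a heredity ideal of $A(\mathfrak p)$; being left projective, the canonical surjection $\iota\colon J(\mathfrak p)\twoheadrightarrow\overline{J(\mathfrak p)}$ splits as left $A(\mathfrak p)$-modules, $J(\mathfrak p)=S\oplus C$ with $C=\ker\iota$ and $S$ a left submodule carried isomorphically onto the ideal $\overline{J(\mathfrak p)}\subseteq A(\mathfrak p)$. Base-changing the surjection $J\otimes_AJ\twoheadrightarrow J$ (onto since $J^2=J$) gives a surjective multiplication $\mu\colon J(\mathfrak p)\otimes_{A(\mathfrak p)}J(\mathfrak p)\to J(\mathfrak p)$ with $\mu(x\otimes y)=\iota(x)\cdot y=x\cdot\iota(y)$ for the two $A(\mathfrak p)$-actions on $J(\mathfrak p)$. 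Then $\iota(x)\cdot c=\mu(x\otimes c)=x\cdot\iota(c)=0$ for all $x\in J(\mathfrak p)$, $c\in C$, so the ideal $\overline{J(\mathfrak p)}$ kills $C$ on the left; hence, writing $y=s+c$, we get $\mu(x\otimes y)=\iota(x)\cdot s\in S$, so $J(\mathfrak p)=J(\mathfrak p)^2=\operatorname{im}\mu\subseteq S$ and $C=0$.

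Granting this, ``if'' in (a) follows: $J(\mathfrak m)\hookrightarrow A(\mathfrak m)$ for every maximal $\mathfrak m$ says $\operatorname{Tor}_1^k(A/J,k(\mathfrak m))=0$, so the $k$-finite module $A/J$ is $k$-flat, hence $k$-projective (giving (0)), hence $J$ is a $k$-summand of $A$ and so $k$-projective; a standard fibrewise criterion for projectivity of a $k$-flat module (cf.\ \cite{CPS90}) upgrades ``$_{A(\mathfrak m)}J(\mathfrak m)$ projective for all maximal $\mathfrak m$'' to ``$_AJ$ projective'' (giving (1)); (2) is assumed; and $E\otimes_kk(\mathfrak p)\cong\End_{A(\mathfrak p)}(\overline{J(\mathfrak p)})$ is semisimple for every $\mathfrak p$, so $E$ is $k$-semisimple by \cite[Thm.~2.1]{CPS90} (giving (3)). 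For (b), ``only if'' is base change (separability survives $-\otimes_kk(\mathfrak m)$); for ``if'', separable-type heredity at each maximal $\mathfrak m$ in particular gives heredity at each $\mathfrak m$, so the ``if'' argument above (which consulted only maximal ideals for (0),(1)) makes $J$ a heredity ideal and $E=\End_A({_AJ})$ $k$-projective by Proposition \ref{proposition1}(a); since $E(\mathfrak m)\cong\End_{A(\mathfrak m)}(\overline{J(\mathfrak m)})$ is separable for every maximal $\mathfrak m$, the Auslander--Goldman fact that a $k$-projective algebra separable at all maximal residue fields is separable (cf.\ \cite[pp.~133--135]{CPS90}, \cite{AG60b}) forces $E$ separable, i.e.\ $J$ is of separable type.

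For (c) and (d) the ``only if'' directions are base change (an Azumaya, resp.\ split, $k$-algebra tensored with $k(\mathfrak p)$ is central simple, resp.\ a product of matrix algebras) combined with (a). For ``if'' in (c): since $A/J$ is $k$-projective it is torsion free over $k$, so $\sqrt J/J\subseteq A/J$, being torsion, is $0$ and $J=\sqrt J$; after checking (via the ``if'' method of (a)) that $_AJ$ is projective I would pass to the Morita-equivalent pair $(M_n(A),M_n(J))$, reducing to $J=AeA$, and then run the proof of Lemma \ref{thm1}: its height $\leq 1$ hypothesis is met, so \cite[Prop.~2.3]{CPS90} makes $eAe$ a product of Azumaya $k$-algebras, whence $eA$ is $eAe$-projective, $Ae\otimes_{eAe}eA$ is $_A$-projective, coincides with $J$ after inverting $k\setminus\{0\}$, and $\End_A({_AJ})$ is semisplit by Proposition \ref{proposition1}(b). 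For ``if'' in (d): split-type heredity at height $\leq 1$ is a fortiori semisplit-type heredity there, so by (c) $J$ is heredity of semisplit type with $E=\prod_iE_i$, each $E_i$ Azumaya over $k$; for each height $\leq 1$ prime $\mathfrak p$, $E_i(\mathfrak p)$ is a direct factor of the split algebra $E(\mathfrak p)\cong\End_{A(\mathfrak p)}(\overline{J(\mathfrak p)})$ and is central simple over $k(\mathfrak p)$, hence a single matrix algebra; so $E_i$ is split at every height $\leq 1$ prime and therefore $[E_i]=0$ in $\operatorname{Br}(k)$ by the Auslander--Goldman purity theorem for a regular domain of dimension $\leq 2$, giving $E_i\cong\End_k(P_i)$ for a finitely generated projective $k$-module $P_i$, i.e.\ $E$ is split.

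The main obstacle I anticipate is not the new local assertion of (a) — the bimodule computation disposes of it cleanly — but, in (c) and (d), establishing in advance that $_AJ$ is projective so that the reduction to $J=AeA$ is legitimate, and then extracting the Azumaya (resp.\ full-matrix-algebra) structure of the factors of $\End_A({_AJ})$ from codimension-one information; the latter is exactly where Lemma \ref{thm1}, \cite[Prop.~2.3]{CPS90}, and the Brauer-group purity theorem for regular rings of dimension $\leq 2$ are needed, and where the normality (resp.\ regularity and dimension bound) on $k$ is indispensable.
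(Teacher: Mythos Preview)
Your bimodule argument for the first assertion of (a) is correct and genuinely different from the paper's route. The paper passes to the completion $\widehat A_{\mathfrak p}$ in order to write $J=AeA$ for an idempotent $e$, and then uses the tensor-product description $\overline{J(\mathfrak p)}\cong A(\mathfrak p)e\otimes_{eA(\mathfrak p)e}eA(\mathfrak p)$ (the field case of Proposition~\ref{proposition1}) to manufacture a surjection $\overline{J(\mathfrak p)}\twoheadrightarrow J(\mathfrak p)$ in the reverse direction. Your argument avoids both the completion and the idempotent by exploiting the $(A(\mathfrak p),A(\mathfrak p))$-bimodule structure of $J(\mathfrak p)$ and the surjectivity of the base-changed multiplication; it is more elementary and self-contained. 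The remainder of (a) and all of (b) agree with the paper.

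There is, however, a real gap in your plan for (c). You propose to ``check (via the `if' method of (a)) that $_AJ$ is projective'' before performing the $M_n$-reduction to the case $J=AeA$. But the fibrewise projectivity criterion you invoke in (a) uses \emph{maximal} ideals $\mathfrak m$, whereas the hypothesis of (c) supplies information only at primes of height $\leq 1$; there is no reason those coincide. You are right to flag this in your final paragraph as the central difficulty, but the fix you offer does not work. The paper handles this differently: it does \emph{not} establish projectivity first. It passes directly to $A'=M_n(A)$ and $J'=M_n(J)$, citing \cite[Rem.~1.4(b)]{CPS90} for the existence of an idempotent $e'\in A'$ with $J'=A'e'A'$, and then applies Lemma~\ref{thm1} as a black box. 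That lemma deduces projectivity of $_{A'}J'$ internally, as a consequence of $e'A'e'$ being a product of Azumaya algebras over $k$ (established from height $\leq 1$ data via \cite[Prop.~2.3]{CPS90}), together with the resulting isomorphism $A'e'\otimes_{e'A'e'}e'A'\cong J'$. So the order of the argument is the reverse of what you propose.

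For (d) your Brauer-group approach is again different from the paper's and is valid. The paper proves an ad hoc Lemma~\ref{awfullemma}: part (a) lifts full-matrix-algebra structure over a DVR from the two fibres, and part (b) uses maximal-order theory together with the Auslander--Goldman projectivity criterion in dimension $\leq 2$ to globalize. Your route through $[E_i]=0$ in $\operatorname{Br}(k)$ is cleaner; note that once (c) gives each $E_i$ Azumaya, you actually need only the generic point ($E_i\otimes_kK$ a matrix algebra) together with the injection $\operatorname{Br}(k)\hookrightarrow\operatorname{Br}(K)$ for regular $k$, so the height~$1$ information and the bound $\dim k\leq 2$ play no further role in this last step.
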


 \begin{proof} We begin with the following

 \smallskip\noindent{\bf Claim:} For any $\mathfrak p\in \Spec \,k$ such that $\overline{J(\mathfrak p)}$ is
 a heredity ideal in $A(\mathfrak p)$, we have:

 \begin{itemize}\item[(i)] $(A/J)_{\mathfrak p}$ is $k_{\mathfrak p}$-projective; and

 \item[(ii)] $J(\mathfrak p)\cong \overline{J(\mathfrak p)}$.

 \end{itemize}

We first prove (ii). For this, we may temporarily take $A$ to be $A_{\mathfrak p}$ and then even pass to its
completion $\widehat A_{\mathfrak p}$,
a faithfully flat base change from $A_{\mathfrak p}$. At this point, with $A$ local and complete,
 we can assume that $J=AeA$
 for an idempotent $e$; see the discussion in \cite[\S 1]{CPS90}. Note that $\overline {J(\mathfrak p)}$ remains a heredity ideal
 in $A(\mathfrak p)$.  By the well-known field case of Proposition \ref{proposition1},
 \begin{equation}\label{eqn2}\overline {J(\mathfrak p)}\cong A(\mathfrak p)e\otimes_{eA(\mathfrak p)e} eA(\mathfrak p)\cong
 (Ae\otimes_{eAe}eA)\otimes_k k(\mathfrak p).\end{equation}
 The natural multiplication map $Ae\otimes_{eAe}eA\to AeA$ gives a surjection
 \begin{equation}\label{eqn3}(Ae\otimes_{eAe}eA)\otimes_k k(\mathfrak p)\twoheadrightarrow
 J\otimes_kk(\mathfrak p)=J(\mathfrak p)\end{equation}
 So $J(\mathfrak p)$ is a homomorphic image of $\overline{J(\mathfrak p)}$
 by (\ref{eqn2}). Since $\overline{J(\mathfrak p)}$ is defined as the image of a natural surjection $J(\mathfrak p)\twoheadrightarrow \overline{J(\mathfrak p)}$, the latter must be an isomorphism, proving (ii). By \cite[Lem. 3.3.1]{CPS90} (a well-known
 commutative algebra fact \cite[Lem. 4]{M80}), assertion (i) also follows. This completes the proof of the Claim.

\medskip We now prove (a). First, assume that $J$ is a heredity ideal in $A$. By the Claim above, $(A/J)_{\mathfrak  p}$ is $k_{\mathfrak p}$-projective for every prime ideal $\mathfrak p$ in $k$. In particular, this holds for every maximal ideal
 $\mathfrak m$ in $k$, so that $A/J$ is a projective $k$-module. Thus,
 the sequence $0\to J\to A\to A/J\to 0$ of $k$-modules is $k$-split. So, for any prime ideal $\mathfrak p$,
 the sequence remains split upon applying the functor $-\otimes_kk(\mathfrak p)$. Thus, $\overline{J(\mathfrak p)}\cong J(\mathfrak p)$,
 and we may identify $J(\mathfrak p)$ with its image in $A(\mathfrak p)$, for any prime ideal $\mathfrak p$. Since
 $J$ is idempotent, so is each $J(\mathfrak p)$ idempotent, and also $J(\mathfrak p)$ is a projective left $A(\mathfrak p)$-module, since it is obtained from the projective left $A$-module $_AJ$ by base change from $k$ to $k(\mathfrak p)$.
 Finally, since $_AJ$ is projective, its $k$-semisimple endomorphism algebra $\End_A({_AJ})$ has $\End_{A(\mathfrak p)}({_{A(\mathfrak p)}J}(\mathfrak p))$
 as its $k(\mathfrak p)$--base
 change. The latter algebra is, therefore, $k(\mathfrak p)$-semisimple.

Conversely, assume $\overline{J(\mathfrak p)}$ is a heredity ideal for all prime ideals $\mathfrak p$. By the Claim, applied
just for all maximal ideals, it follows that $A/J$ is projective over $k$.   By \cite[Lem. 3.3.2]{CPS90}, $_AJ$ is $A$-projective. Finally, since $_AJ$ is $A$-projective, it base changes to $\End_{A(\mathfrak p)} ({_{A(\mathfrak p)}J}(\mathfrak p))$,
which is $k(\mathfrak p)$-projective by hypothesis. This statement holds for all $\mathfrak p\in\Spec\,k$. Thus, $\End_A({_AJ})$ is $k$-semisimple.  This completes the
proof of (a).

Part (b) is proved similarly, but using \cite[Thm. 4.7]{AG60b}.

Next, consider (c). First, assume $J$ is a heredity ideal of semisplit type.  We need to show that if $\mathfrak p\in\Spec\,k$ has height $\leq 1$, then $\overline{J(\mathfrak p)}$
is a heredity ideal of semisplit type in the algebra $A(\mathfrak p)$.  By (a) above, we know that $\overline{J(\mathfrak p)}$
is a heredity ideal in $A(\mathfrak p)$ and $J(\mathfrak p)\cong\overline{J(\mathfrak p)}$. As noted in the proof of (a), the endomophism algebra $\End_A(J)$ base
changes to $\End_{A(\mathfrak p)}({_{A(\mathfrak p)}J}(\mathfrak p))$. Since $\End_A({_AJ})$ is a direct sum  (algebra-theoretic direct product) of
Azumaya algebras (i.e., it is semisplit), $\End_{A(\mathfrak p)}({_{A(\mathfrak p)}J}(\mathfrak p))$ is also a direct
sum of Azumaya algebras over $k(\mathfrak p)$; see \cite[Rem. 2.4]{CPS90}, \cite[Prop. 1.4, Cor. 1.6]{AG60b}.

Conversely, suppose that $\overline{J(\mathfrak p)}$ is a heredity ideal in $A(\mathfrak p)$ of semisplit type, for every
prime ideal $\mathfrak p$ of height $\leq 1$. We wish to show $J$ is an heredity ideal of semisplit type in the
$k$-algebra $A$. The reader may check that this statement holds if and only if it is true for the ideal $J'=M_n(J)$
in the algebra $A'=M_n(A)$ for any particular positive integer $n$. As noted above Proposition \ref{proposition1}, we
can choose $n$, so that $J'$ is generated by an idempotent. By Lemma \ref{thm1}(b), $J'$ is a heredity ideal in $A'$ of
semisplit type.
 
 Now consider (d). 
 We require the following
 
 \begin{lem} \label{awfullemma} (a) Let $\mathtt D$ be a DVR with maximal ideal $\mathfrak m$, fraction field $\mathtt F$ and residue
 field $\mathtt f=\mathtt D/\mathfrak m$. Let $B$ be an projective algebra over $\mathtt D$ with the property that $B_{\mathtt F}$ and $B_{\mathtt f}$
 are full $n\times n$ matrix algebras over $\mathtt F$ and $\mathtt f$, respectively.  Then $B$ is an full $n\times n$ matrix
 algebra over $\mathtt D$.
 
 (b) Suppose that $E$ is an Azumaya algebra projective over a regular domain $k$ of dimension $\leq 2$. Suppose
 that, for any prime ideal $\mathfrak p\in\Spec\,k$ of height $\leq 1$, $E_{\mathfrak p}$ is split.  Then $E$ is split.\end{lem}
 
 \begin{proof}  (a) is an exercise using Nakayama's lemma. 
 
 (b) If $\mathfrak p=0$, put $K:=E_{\mathfrak p}$. Then we have $E_K\cong \End_K(V)$,
 where $V=K^n$ for some $n>0$. Using \cite[Prop. 1.1.1]{DPS98a}, $V$ has a $E$-stable full $k$-lattice $N$. 
 Thus, identifying $E$ with its image in $\End_k(N)$ we have
 $E\subseteq\End_k(N)$. For any prime ideal $\mathfrak p$  of height $\leq 1$ in $k$,
 $E_{\mathfrak p}$ is split, by hypothesis, so that $E_{\mathfrak p}\cong \End_{k_{\mathfrak p}}(P)$ for a projective $k_{\mathfrak p}$-module $P$ (which depends on $\mathfrak p$).   In particular, $E_{\mathfrak p}$ is a maximal
 order in $\End_K(P_K)$, as is well-known \cite[Thm. 8.7]{Reiner75}. Clearly, the $E_{\mathfrak p}$-modules $P_{\mathfrak p}$ and
 $N_{{\mathfrak p}}$ have the same rank, so are isomorphic as $E_{\mathfrak p}$-modules \cite[Thm. 18.7i]{Reiner75}.
 Thus, $E_{\mathfrak p}=\End_{k_{\mathfrak p}}(N_{\mathfrak p})$. Intersecting over all $\mathfrak p$ of height
 $\leq 1$, we get that $E\cong\End_k(P)$ is split as required; see \cite[(11.3)]{Reiner75} and the well-known (local version
 of) Auslander-Goldman's criterion for projectivity of modules over regular domains of dimension $\leq 2$ \cite[p. 18]{AG60a}. This proves the Lemma. \end{proof}
 
 Return to the proof of (d). First, suppose that $J(\mathfrak p)$ is a heredity ideal in $A(\mathfrak p)$ of split type, for
 every prime ideal $\mathfrak p$ of height $\leq 1$ of the regular domain $k$ of dimension $\leq 2$. We will
 show that $J$ is an heredity ideal of split type in the $k$-algebra $A$. (This will prove the $\Longleftarrow$ direction in (d). 
 We leave the $\Longrightarrow$ direction to the reader; it is not needed later in this paper.) The righthand hypothesis of
 (d) implies the righthand hypothesis of (c). The proof of (c) shows that $E=\End_A({_AJ})$ is a direct product of 
 Azumala algebras which may be taken as projections of $E$ onto the central simple components of $E_K$. For
 a prime ideal $\mathfrak p$ of height 1, $E({\mathfrak p})$ is a direct product of full matrix algebras. Clearly, if $B$
 is such a projection, the same
 statement holds if $E$ is replaced by $B$. Dimension considerations show that if $B(\mathfrak p)$ is itself a
 full matrix algebra. By (a) of the above lemma, $B_{\mathfrak p}$ is a full matrix algebra. Now by part (a) of
 the lemma, the $E$ is a full matrix algebra over $k$.
 \end{proof}

 Recall that the projective $k$-algebra $A$ is called a quasi-hereditary algebra (QHA) provided there exists a finite ``defining
 sequence'' $0=J_0\subseteq J_1\subseteq J_2\subseteq\cdots\subseteq J_t=A$ of ideals in $A$ such that for $0<i\leq t$,
 $J_i/J_{i-1}$ is a hereditary ideal in $A/J_{i-1}$.  In case  $k$ is a field, this definition agrees with the classical notion
 of a QHA given in \cite{CPS88}.  Given such a defining sequence $\{J_\bullet\}$, we say that it is
 a {\it defining sequence of separable type} provided that each $J_i/J_{i-1}$ is a heredity ideal of separable type, i.e.,
 $\End_{A/J_{i-1}}({_{A/J_{i-1}}J}_i/J_{i-1})$ is of separable type, $i=1,\cdots, t$.  A defining sequence of semisplit type, etc., can
 be defined similarly.

We end this section with the following improvement (and correction---see the remark following it) of \cite[Thm. 3.3]{CPS90}.
The proof is easily obtained from Theorem \ref{BigTheorem}, and further details are omitted.
%Corollary \ref{OldCPSTheorem}
 \begin{cor}\label{OldCPSTheorem} Let $A$ be as in Theorem \ref{BigTheorem}. Assume that
 \begin{equation}\label{sequence} 0=J_0\subseteq J_1\subseteq J_2\subseteq\cdots \subseteq J_t=A\end{equation}
 is a sequence of idempotent ideals. The following statements
 hold.

 (a) The $k$-algebra $A$ is quasi-hereditary with defining sequence (\ref{sequence}) if and only if, for each $\mathfrak p\in
 \Spec\,k$, the algebra $A(\mathfrak p)$ is quasi-hereditary with defining sequence
 \begin{equation}\label{sequence'}
 0=\overline{J_0(\mathfrak p)}\subseteq \overline{J_1(\mathfrak p)}\subseteq\overline{J_2(\mathfrak p)}\subseteq
 \cdots \subseteq\overline{J_t(\mathfrak p)}= A(\mathfrak p).\end{equation}
 (For any given $\mathfrak p\in\Spec\,k$, when these conditions
hold, the isomorphisms $\overline{J_i(\mathfrak p)}\cong J_i(\mathfrak p)$ are also valid.)

 (b) The k-algebra $A$ is quasi-hereditary of separable type with separable type defining sequence (\ref{sequence}) if and
 only if, for each maximal ideal $\mathfrak p$ in $ k$,  the $k(\mathfrak p)$-algebra $A(\mathfrak p)$ is quasi-hereditary of separable type with separable type defining sequence (\ref{sequence'}).

(c) Assume that $k$ is a Noetherian integrally closed domain. Assume also that for each $i$, $0\leq i<t$, $A/J_i$ is projective over $k$. The $k$-algebra $A$ is then quasi-hereditary of  semisplit type with semisplit type defining sequence (\ref{sequence}) if and only if, for each prime ideal  $\mathfrak p\in\Spec\,k$ of height $\leq 1$, the $k(\mathfrak p)$-algebra $A(\mathfrak p)$ is quasi-hereditary of semisplit type with semisplit  type defining sequence
(\ref{sequence'}).

(d) Assume that $k$ is a regular domain of dimension $\leq 2$. Assume also that for each $i$, $0\leq i<t$, $A/J_i$ is projective over $k$. The $k$-algebra $A$ is then quasi-hereditary of  split type with split type defining sequence (\ref{sequence}) if and only if, for each prime ideal  $\mathfrak p\in\Spec\,k$ of height $\leq 1$, the $k(\mathfrak p)$-algebra $A(\mathfrak p)$ is quasi-hereditary of split type with split  type defining sequence (\ref{sequence'}). \end{cor}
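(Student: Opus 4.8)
The plan is to deduce all four parts from Theorem~\ref{BigTheorem} by induction on the length $t$ of the chain (\ref{sequence}), at each stage peeling off the bottom ideal $J_1$ and passing to the quotient algebra $A/J_1$. The one point that has to be checked with care --- and it is the only genuine content of the argument --- is that reduction modulo a prime $\mathfrak p$ is compatible with forming the subquotients $J_i/J_{i-1}$ and the quotient algebras $A/J_{i-1}$, as long as the relevant quotients of $A$ are $k$-projective. Concretely: if $I\subseteq I'$ are ideals of $A$ with $A/I$ and $A/I'$ both $k$-projective, then, since $A$ itself is $k$-projective, the modules $I$, $I'$, $I'/I$ are all $k$-projective and the three short exact sequences $0\to I\to A\to A/I\to 0$, $0\to I'/I\to A/I\to A/I'\to 0$, $0\to I\to I'\to I'/I\to 0$ are $k$-split; hence they remain exact after applying $-\otimes_kk(\mathfrak p)$, so that $I(\mathfrak p)\overset\sim\longrightarrow\overline{I(\mathfrak p)}$, $(A/I)(\mathfrak p)\cong A(\mathfrak p)/\overline{I(\mathfrak p)}$, and, viewing $J_i/J_{i-1}$ as an ideal of the $k$-projective algebra $A/J_{i-1}$, one obtains $(A/J_{i-1})(\mathfrak p)\cong A(\mathfrak p)/\overline{J_{i-1}(\mathfrak p)}$ and $\overline{(J_i/J_{i-1})(\mathfrak p)}\cong\overline{J_i(\mathfrak p)}/\overline{J_{i-1}(\mathfrak p)}$. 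Thus the chain (\ref{sequence}) for $A$ and its reduction (\ref{sequence'}) for $A(\mathfrak p)$ match step by step, and the assertion ``$J_i/J_{i-1}$ is a heredity ideal (of separable / semisplit / split type) in $A/J_{i-1}$'' matches the assertion ``$\overline{J_i(\mathfrak p)}/\overline{J_{i-1}(\mathfrak p)}$ is a heredity ideal (of the same type) in $A(\mathfrak p)/\overline{J_{i-1}(\mathfrak p)}$''.

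With this translation in hand, part (a) runs as follows. If $A$ is quasi-hereditary with defining sequence (\ref{sequence}), then by definition each $A/J_i$ is $k$-projective and each $J_i/J_{i-1}$ is a heredity ideal in the $k$-projective algebra $A/J_{i-1}$; applying Theorem~\ref{BigTheorem}(a) at every step and translating as above shows (\ref{sequence'}) is a defining sequence of $A(\mathfrak p)$ for each $\mathfrak p$, and simultaneously yields the isomorphisms $\overline{J_i(\mathfrak p)}\cong J_i(\mathfrak p)$. For the converse I would induct on $t$: the case $t=1$ is the known equivalence recorded in \S 2 (after \cite[Thm. 2.1]{CPS90}) between $k$-semisimplicity of $A$ (equivalently of $A^{\op}\cong\End_A({_AA})$) and $k(\mathfrak p)$-semisimplicity of all the $A(\mathfrak p)$, using left--right symmetry of semisimplicity over a field. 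For $t>1$, the hypothesis at $i=1$ says $\overline{J_1(\mathfrak p)}$ is a heredity ideal of $A(\mathfrak p)$ for every $\mathfrak p$, so Theorem~\ref{BigTheorem}(a) makes $J_1$ a heredity ideal of $A$; in particular $A/J_1$ is $k$-projective, the compatibility above applies, and the chain $0\subseteq J_2/J_1\subseteq\cdots\subseteq A/J_1$ of length $t-1$ reduces mod $\mathfrak p$ to a defining sequence of $(A/J_1)(\mathfrak p)\cong A(\mathfrak p)/\overline{J_1(\mathfrak p)}$. By induction $A/J_1$ is quasi-hereditary with that chain (and the remaining $A/J_i$ are $k$-projective), and prepending the heredity ideal $J_1$ of $A$ shows $A$ is quasi-hereditary with defining sequence (\ref{sequence}).

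Parts (b), (c), (d) go through word for word the same way, using parts (b), (c), (d) of Theorem~\ref{BigTheorem} in place of (a) and replacing ``heredity ideal'' by ``heredity ideal of separable / semisplit / split type'' throughout. The standing hypotheses that $A/J_i$ is $k$-projective for $0\leq i<t$ (in (c), (d)) are precisely what keeps Theorem~\ref{BigTheorem} and the compatibility lemma applicable at every stage, and the ring-theoretic conditions on $k$ (Noetherian integrally closed domain in (c); regular of dimension $\leq 2$ in (d)) are conditions on $k$ alone and so survive the passage from $A$ to $A/J_1$. The forward implications in (b)--(d) are easier still and need only part (a) together with plain base change: separability, being a product of Azumaya algebras, and being a full matrix algebra over $k$ are each preserved under the $k$-split reduction $-\otimes_kk(\mathfrak p)$, so one reads off directly that $\overline{J_i(\mathfrak p)}/\overline{J_{i-1}(\mathfrak p)}$ is a heredity ideal of the required type in $A(\mathfrak p)/\overline{J_{i-1}(\mathfrak p)}$. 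The main (indeed only) obstacle in all of this is the bookkeeping of the first paragraph --- checking that reduction mod $\mathfrak p$ commutes with the subquotient and quotient-algebra operations and that the inductive hypotheses of the relevant part of Theorem~\ref{BigTheorem} remain satisfied at each peeling-off step --- after which nothing remains to prove.
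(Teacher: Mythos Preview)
Your proposal is correct and is exactly the argument the paper has in mind: the authors simply write ``The proof is easily obtained from Theorem~\ref{BigTheorem}, and further details are omitted,'' and what you have written is precisely those omitted details --- induction on $t$, peeling off $J_1$, and invoking the appropriate part of Theorem~\ref{BigTheorem} at each step.

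One small point worth tightening: in the converse direction of (a), when you first invoke your compatibility paragraph to identify $\overline{(J_i/J_1)(\mathfrak p)}$ with $\overline{J_i(\mathfrak p)}/\overline{J_1(\mathfrak p)}$, you do not yet know that $A/J_i$ is $k$-projective for $i>1$ --- only $A/J_1$ is known to be so, from the heredity of $J_1$. Fortunately the identification $\overline{(J_i/J_1)(\mathfrak p)}=\overline{J_i(\mathfrak p)}/\overline{J_1(\mathfrak p)}$ requires only that $A/J_1$ be $k$-projective (chase the commutative square obtained from $J_i\to A\to A/J_1$ and $J_i\to J_i/J_1\to A/J_1$ after tensoring with $k(\mathfrak p)$; surjectivity of $J_i(\mathfrak p)\to(J_i/J_1)(\mathfrak p)$ and the identification $(A/J_1)(\mathfrak p)\cong A(\mathfrak p)/\overline{J_1(\mathfrak p)}$ give the result). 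So your compatibility lemma should be stated with the weaker hypothesis, but the argument goes through unchanged.
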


\begin{rems}\label{rem2.7} (a) Parts (a) and (b) of Corollary \ref{OldCPSTheorem} are essentially the same as parts (a) and (b) in
\cite[Thm. 3.3]{CPS90}.  We essentially adapted the arguments given in \cite{CPS90} to obtain
Theorem \ref{BigTheorem}(a), (b)
above.
 Part (c) of \cite[Thm. 3.3]{CPS90} is parallel to Corollary \ref{OldCPSTheorem}(c) above, but for
a smaller class of algebras $k$, omitting the assumption
 that the $A/J_i$ be projective over $k$. This omission appears
to be a mistake, and in any case the proof given in \cite{CPS90} is incorrect. For example, the assertion on \cite[p. 141]{CPS90}
that the $(E(\mathfrak m), A(\mathfrak m))$-bimodule $J(\mathfrak m)$ has $\overline{J(\mathfrak m)}$ as a homomorphic image appears to be wrong.

(b) Rouquier \cite[Thm. 4.15]{Ro08} gives a variation on Corollary \ref{OldCPSTheorem}(b) in a highest weight category setting. In the present
context, this is very close (but not identical)
to using heredity ideals $J_i/J_{i-1}$ of split type, and Corollary \ref{OldCPSTheorem}(c) holds as written using defining ideals
with this property with a proof similar to the proof above in the semisplit case. A similiar remark holds for part (b) in
Theorem \ref{BigTheorem}.
 
\end{rems}

\section{Stratified algebras and their localizations.} This section follows the outline of the previous section on integral
QHAs. The idea is to weaken the notion of a heredity ideal. As we see elsewhere, the
new class of algebras, called {\it standardly stratified algebras} (or SSAs), arise naturally in the study of the
cross-characteristic representation theory of finite groups of Lie type. Stratified algebras over a field (with some discussion over
DVRs) were first
introduced in \cite{CPS96}. The version we follow here, valid over general commutative rings, was first given in \cite{DPS98a}.
Essentially, condition (3) in the definition of a heredity ideal in Definition \ref{defn1} is dropped to give the notion of a {\it standard
stratifying} ideal. In particular, Proposition \ref{proposition1}(c) of the previous section could have been used to begin this section.

As in \S2, let $k$ be a Noetherian commutative ring, and let $A$ be a $k$-algebra, always assumed to be a finite
$k$-module which projective over $k$. We make the following definition, analogous to the notion of a heredity ideal.

\begin{defn}\label{SSI}  An ideal $J$ in $A$ is called a {\it standard stratifying ideal} if the following conditions hold.

\smallskip\noindent
(0) $A/J$ is projective over $k$;

\smallskip\noindent
(1)  $J$ is $A$-projective as a left $A$-module.

\smallskip\noindent
(2) $J^2=J$.
\end{defn}
\medskip

Observe that, in particular, a heredity ideal is a standard stratifying ideal. With the above notion, we can make the following
definition.

\begin{defn}\label{SSA} The algebra $A$ over $k$ is called a {\it standardly stratified algebra} (SSA) provided there exists
a finite ``defining sequence'' $0=J_0\subseteq J_1\subseteq J_2\subseteq\cdots\subseteq J_t=A$ of ideals in $A$
such that, for $0<i\leq t$, $J_i/J_{i-1}$ is a standard stratifying ideal in $A/J_{i-1}$.
\end{defn}

Thus, if $A$ is a QHA over $k$, it is also a standardly stratified algebra over $k$. The definition of both types of
algebras are given by defining sequences, but the requirements on the sections $J_i/J_{i-1}$ are weaker in the SSA case.

\begin{rems}\label{rem3.6}
(a) Suppose that $J$ is an ideal in a $k$-algebra $A$. Let $M,N$ be $A/J$-modules. Of course,
they can by inflation be regarded as $A$-modules. Thus, for any integer $n\geq 0$, there is
a $k$-module homomorphism
\begin{equation}\label{coho}\Ext^n_{A/J}(M,N)\longrightarrow\Ext^n_A(M,N).\end{equation}
By \cite[Appendix B]{DPS17a}, if $J$ satisfies conditions (1), (2) of Definition \ref{SSI}, then the maps
in (\ref{coho}) are isomorphisms for all $n\geq 0$. In particular, they are isomorphisms provided that $J$ is
a standard stratifying ideal or a heredity ideal in $A$.

(b) Stratified (and quasi-hereditary) algebras often arise naturally as endomorphism algebras $\End_R(T)$,
where $T$ is an appropriate (right) module for a $k$-algebra $R$. See \cite[(1.2.9) and Thm. 1.2.10]{DPS98a},
as well as the discussion in \S\S4,5 below.
\end{rems}

Let $A$ be a finite $k$-algebra which is projective over $k$. Let $J$ be an ideal in $A$, and assume (for simplicity)
that $A/J$ is projective over $k$.  The following proposition gives a simple condition that guarantees that $J$ 
is a standard stratifying ideal of $A$. 

\begin{prop}\label{prop1}  Let $A$ and $J$ be as immediately above. Suppose there is an idempotent $e\in J$, a $k$-subalgebra $E$ of $eAe$, and a projective $E$-submodule
$P$ of the (left) $E$-module $eA$ such that the natural multiplication map
\begin{equation}\label{mult}Ae\otimes_E P\overset\mu\longrightarrow J\end{equation}
is bijective.  Then $J^2=J$ and $_AJ$ is projective.\end{prop}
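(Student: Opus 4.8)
The plan is to first use the surjectivity of $\mu$ to recognize $J$ as the idempotent ideal $AeA$, and then to transport the $E$-projectivity of $P$ across the isomorphism $\mu$ to obtain the $A$-projectivity of $J$, the main leverage being that $Ae$ is a direct summand of the regular left module $A$.

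First I would pin down $J$. Since $P$ is an $E$-submodule of $eA$, every element of $P$ lies in $eA$, so the image of the multiplication map $\mu$ is contained in $Ae\cdot eA=AeA$; on the other hand, $e\in J$ and $J$ is a two-sided ideal, so $AeA\subseteq J$. As $\mu$ is surjective, these two inclusions force $J=AeA$. Now $e=e^2\in J^2$ because $e\in J$, and $J^2$ is a two-sided ideal, so $J=AeA\subseteq J^2$; together with the trivial inclusion $J^2\subseteq J$ this gives $J^2=J$.

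Next I would show that $J$ is projective as a left $A$-module. Since $\mu$ is, by construction, a homomorphism of left $A$-modules and is bijective, it suffices to prove that $Ae\otimes_E P$ is projective as a left $A$-module. Here $Ae$ is a left ideal of $A$ and also a right $E$-module, the action coming from the inclusion $E\subseteq eAe$ by right multiplication (for $ae\in Ae$ and $u\in E$ one has $aeu=(aeu)e\in Ae$, since $u=ue$); thus $Ae$ is an $(A,E)$-bimodule and $Ae\otimes_E(-)$ is an additive functor from left $E$-modules to left $A$-modules. The module $P$ is finitely generated over $k$ (indeed $eA$ is $k$-finite and $k$ is Noetherian), hence finitely generated over $E$, so, being $E$-projective, it is a direct summand of a free module $E^{\oplus n}$ for some $n$. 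Applying $Ae\otimes_E(-)$, the module $Ae\otimes_E P$ becomes a direct summand of $Ae\otimes_E E^{\oplus n}\cong(Ae)^{\oplus n}$ as left $A$-modules. Finally, $Ae$ is a direct summand of $A$ as a left $A$-module (via $A=Ae\oplus A(1-e)$), hence $A$-projective; so $(Ae)^{\oplus n}$ is $A$-projective, and therefore so is its direct summand $Ae\otimes_E P\cong J$.

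I do not expect a genuine obstacle: the whole argument is short once the module structures are in place. The only points deserving care are the bookkeeping that makes $Ae\otimes_E P$ a bona fide left $A$-module and $\mu$ a morphism of left $A$-modules (so that the identification $J\cong Ae\otimes_E P$ is one of left $A$-modules), and the minor verification that $P$ is $E$-finite; even without finiteness one could replace $E^{\oplus n}$ by an arbitrary free $E$-module and invoke the fact that arbitrary direct sums of projective $A$-modules are projective.
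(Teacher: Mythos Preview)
Your proof is correct and follows essentially the same route as the paper: use surjectivity of $\mu$ to identify $J=AeA$ (hence $J^2=J$), and then use that $P$ is a direct summand of a free $E$-module to realize $Ae\otimes_E P\cong J$ as a direct summand of $(Ae)^{\oplus n}$, which is $A$-projective. Your write-up is in fact a bit more careful than the paper's, supplying the finiteness of $P$ over $E$ and the explicit $(A,E)$-bimodule structure on $Ae$.
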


\begin{proof} Obviously the image of the multiplication map $\mu$ in (\ref{mult}) is contained in $AeP\subseteq AeA\subseteq J$. So, if
$J=\im\,\mu$, then $J=AeA$ is idempotent. (This part of the proof only requires the surjectivity of  $\mu$.)

It remains to prove that $_AJ$ is projective. If the  multiplication map $\mu$ in (\ref{mult}) is bijective, it gives an isomorphism of left $A$-modules.  Thus, $J$ is isomorphic to a direct summand of a sufficiently large direct sum $(Ae)^{\oplus n}$ of copies of $Ae$. This follows from the projectivity of $P$ as an
$E$-module, which implies that $P$ is a direct summand of $E^{\oplus n}$ for some positive integer $n$. \end{proof}

\begin{rem}\label{rem3.3} Proposition \ref{proposition1}(c) provides a converse to Proposition \ref{prop1} above. Specifically, suppose that
$J$ is a standard stratifying ideal in a $k$-algebra $A$ (still assumed to be a finite module which is projective over $k$). Then
by Definition \ref{SSI}, $J^2=J$ and $_AJ$ is a projective $A$-module.  Assume that $J=AeA$ for an idempotent
$e\in A$. By Proposition \ref{proposition1}(c), there is a surjective  map  (\ref{mult}) of $k$-modules taking $E=eAe$ and $P=eA$
(which is a projective left $E=eAe$-module). In fact, $\mu$ is an isomorphism.   \end{rem}

\begin{prop} \label{prop3.4} Assume that $k$ is an integrally closed Noetherian domain. Let $A$ and $J$ be as immediately above Proposition \ref{prop1}. Let $e\in J$ be an
idempotent and let $E$ be a $k$-subalgebra of $eAe$. Assume, for each $\mathfrak p\in\Spec\,k$ of height
$\leq 1$, that $E(\mathfrak p)$ is a direct product of copies of $k(\mathfrak p)$. 
 Assume $E$ is projective over $k$,
and that $P$ is an $E$-submodule, projective over $k$, of $eA$ such that multiplication induces an isomorphism
$$(Ae)(\mathfrak p)\otimes_{E(\mathfrak p)}P(\mathfrak p)\longrightarrow J(\mathfrak p)$$
for each $\mathfrak p\in Spec\,k$ of height $\leq1$.  Then the multiplication map (\ref{mult}) is an isomorphism, and 
 $P$ is projective over $E$. In particular, the conclusions of Proposition \ref{prop1} hold.\end{prop}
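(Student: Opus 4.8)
The plan is to follow the template of Lemma \ref{thm1}: first upgrade the height-$\leq 1$ hypotheses on $E$ to a global structural statement, then exhibit both the source $Ae\otimes_E P$ and the target $J$ of $\mu$ as reflexive $k$-modules, and finally verify injectivity of $\mu$ at the generic point and surjectivity at the height-one primes. The first step is to show $E$ is $k$-semisimple. Since $E$ is projective over $k$ and, for every $\mathfrak p\in\Spec\,k$ with $\htt(\mathfrak p)\leq 1$, the algebra $E(\mathfrak p)$ is a direct product of copies of $k(\mathfrak p)$ --- in particular a direct product of central simple $k(\mathfrak p)$-algebras --- the separability-propagation result \cite[Prop.~2.3]{CPS90} (which uses \cite[Prop.~4.6]{AG60b}) gives that $E$ is a direct product of Azumaya algebras over $k$; in particular $E$ is separable, and hence $k$-semisimple (one could note further that, since the fibres are commutative, $E$ is a finite product of copies of $k$, though this is not needed). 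Consequently, since $P$ is projective over $k$ and $E$ is $k$-semisimple, $P$ is projective over $E$ (\cite[pp.~133--135]{CPS90}); this already settles the second assertion of the proposition. Moreover $P$ is then a direct summand of some $E^{\oplus n}$, so $Ae\otimes_E P$ is a direct summand of $(Ae)^{\oplus n}$ and is therefore projective --- hence torsion-free and reflexive --- over $k$, since $Ae$ is a $k$-direct summand of $A$. Likewise, because $A/J$ is projective over $k$, the ideal $J$ is a $k$-direct summand of $A$, so $J$ too is $k$-projective and reflexive.

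For injectivity of $\mu$, apply the hypothesis at $\mathfrak p=0$: the map $\mu_K\colon (Ae)_K\otimes_{E_K}P_K\to J_K$ is an isomorphism. Set $N=\ker\mu$; tensoring the left-exact sequence $0\to N\to Ae\otimes_E P\overset{\mu}{\longrightarrow}J$ with the flat $k$-algebra $K$ identifies $N_K$ with $\ker\mu_K=0$, and since $N$ is a submodule of the torsion-free module $Ae\otimes_E P$ it embeds into $N_K$, so $N=0$. For surjectivity, put $C=\ck\mu$, a finitely generated $k$-module. For each $\mathfrak p$ with $\htt(\mathfrak p)\leq 1$, right-exactness of $-\otimes_k k(\mathfrak p)$ applied to $Ae\otimes_E P\to J\to C\to 0$ identifies $C(\mathfrak p)$ with the cokernel of the induced map $(Ae)(\mathfrak p)\otimes_{E(\mathfrak p)}P(\mathfrak p)\to J(\mathfrak p)$, which is zero by hypothesis; Nakayama over the local ring $k_{\mathfrak p}$ then forces $C_{\mathfrak p}=0$, i.e.\ $\mu_{\mathfrak p}$ is surjective. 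Now work inside the common ambient space $J_K\cong(Ae\otimes_E P)_K$: both $\im\mu$ (identified with $Ae\otimes_E P$ via the injectivity just shown) and $J$ are reflexive $k$-modules, hence each equals the intersection of its localizations at the height-one primes of $k$ (\cite[Thm.~4.25]{Reiner75}); since $(\im\mu)_{\mathfrak p}=\im\mu_{\mathfrak p}=J_{\mathfrak p}$ at every such $\mathfrak p$, these intersections coincide and $\im\mu=J$. Thus $\mu$ is bijective, which is the first assertion; and as $P$ is projective over $E$, the hypotheses of Proposition \ref{prop1} are satisfied for this choice of $e$, $E$, $P$, so $J^2=J$ and ${}_AJ$ is projective.

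The main obstacle is the opening step --- promoting ``$E(\mathfrak p)$ is split semisimple for all $\mathfrak p$ of height $\leq 1$'' to ``$E$ is $k$-semisimple'' --- which is exactly where the Auslander--Goldman-type purity input (via \cite{CPS90,AG60b}) is indispensable. Everything after that is the reflexive-module bookkeeping already performed in Lemma \ref{thm1}; the only points demanding care are to carry out all the localizations inside the fixed ambient space $J_K$, and to invoke the right-exactness --- not exactness --- of base change $-\otimes_k k(\mathfrak p)$ when comparing cokernels.
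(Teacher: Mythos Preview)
Your proof is correct and follows essentially the same route as the paper's: invoke \cite[Prop.~2.3]{CPS90} to promote the height-$\leq 1$ hypotheses on $E$ to separability (in fact $E\cong k^r$), deduce $P$ is $E$-projective, establish injectivity of $\mu$ from the generic fibre using torsion-freeness of $Ae\otimes_E P$, and obtain surjectivity by combining Nakayama at height-one primes with the reflexive-intersection argument. The only cosmetic differences are that the paper phrases injectivity via an explicit factorization $Ae\otimes_E P\hookrightarrow (Ae)_K\otimes_E P$ rather than your kernel-commutes-with-flat-base-change argument, and handles the surjectivity bookkeeping inside $(AeP)_K$ directly rather than through the cokernel $C$; both are equivalent.
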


\begin{proof}Since $E$ is projective over $k$, and $E(\mathfrak p)$ is semisimple for each $\mathfrak p\in\Spec\,k$ having
height $\leq 1$, $E$ is the product of its projections onto simple factors of $E\otimes_kK$. (See \cite[Prop. 2.3b]{CPS90}, which uses also the fact that $k$ is an integrally closed domain with fraction field $K$.) It follows that $E$ is a direct product of copies of $k$.

In particular, $E$ is separable as a $k$-algebra in the classical sense of Auslander-Goldman \cite{AG60b}, 
and so each $E$-module projective over $k$
is projective as an $E$-module. (See the discussion in \cite[p. 133]{CPS90}.) In particular,
$P$ is projective as an $E$-module.

It remains to show that the multiplication map $\mu$ in (\ref{mult}) is an isomorphism.
 By hypothesis, for $\mathfrak p=(0)$, $\mu$
becomes an isomorphism upon base change to $K$. However, the natural map $Ae\otimes_EP\longrightarrow
(Ae\otimes_EP)_K$ factors as the composite of the map $Ae\otimes_EP\to (Ae)_K\otimes_EP$ (which is an injection, using the projectivity of $P$ over
$E$) with the natural invertible map
$$(Ae)_K\otimes_EP=(K\otimes_kAe)\otimes_EP\longrightarrow K\otimes_k(Ae\otimes_EP)=(Ae\otimes_EP)_K.$$ Thus,
the  map $\mu$ in (\ref{mult}), when followed by the inclusion $J\subseteq J_K$, becomes an injection. (Note that $J$ is $k$-torsion
free.) Hence, the displayed map itself is an injection.

Note that the injectivity of $\mu$ gives an isomorphism of $Ae\otimes_EP$ with its image $AeP
\subseteq J$.

It remains to show that $\mu$ is surjective. By hypothesis, the
 isomorphism $(Ae)(\mathfrak p)\otimes_{E(\mathfrak p)}P(\mathfrak p)\overset\sim\longrightarrow J(\mathfrak p)$ gives a surjection
$$\begin{aligned}(Ae\otimes_EP)_{\mathfrak p }\cong(Ae)_{\mathfrak p}\otimes_EP \cong(Ae)_{\mathfrak p}\otimes_{E_\mathfrak p}E_{\mathfrak p}\otimes_EP  &\\ \cong (Ae)_{\mathfrak p}\otimes_{E_{\mathfrak p}}P_{\mathfrak p}
\twoheadrightarrow
(Ae)(\mathfrak p)\otimes_{E(\mathfrak p)}P_{\mathfrak p}\cong J(\mathfrak p),\end{aligned}$$
compatible with multiplication, for each height one prime ideal $\mathfrak p$. So multiplication gives surjections $(Ae\otimes_EP)_{\mathfrak p}\twoheadrightarrow J_{\mathfrak p},$ since $J(\mathfrak p)$ is the head of the finitely
generated $k$-module $J_{\mathfrak p}$. Since $P$ is a projective $E$-module, and $Ae$ is
 projective over $k$, the $A$-module $AeP\cong Ae\otimes_EP$ is projective over $k$. Hence,
$$ AeP=\bigcap_{\htt(\mathfrak p)=1}(AeP)_{\mathfrak p}$$
where $\htt(\mathfrak p)$ denotes the height of the prime ideal $\mathfrak p$.   The intersection is taken in $(AeP)_K$.
However, the inclusion $AeP\subseteq J$ induces an isomorphism $(AeP)_K\overset\sim\longrightarrow J_K$, which we use to identify $J$ with a submodule of $(AeP)_K$.  We may similarly regard any $J_{\mathfrak p}$ as a $k_{\mathfrak p}$-submodule of $(AeP)_K$ containing, and, in fact, equal to $(AeP)_{\mathfrak p}$. It is still true that
$J\subseteq\bigcap_{\htt(\mathfrak p)=1} J_{\mathfrak p}$. Thus, $J\subseteq AeP\subseteq J$ in $(AeP)_K$. The
resulting equality $AeP=J$ holds as well back in $J_K$. We have now obtained all the hypotheses of Proposition \ref{prop1},
and, so the proof of this proposition is complete.
\end{proof}

We conclude this section with a brief discussion of stratifying systems. These are analogous 
 to highest weight category structures for a module category $A$-mod.

By a quasi-poset, we mean a (usually finite) set $\Lambda$ with a transitive and reflexive relation $\leq$.  (In other words,
$\leq$ is a pre-order on $\Lambda$.) An
equivalence relation $\sim$ is defined on $\Lambda$ by putting $\lambda\sim\mu$ if and only if $\lambda\leq\mu$ and $\mu\leq\lambda$. Let $\bar\lambda$ be the equivalence class containing $\lambda\in\Lambda$. Of course, $\bar\Lambda$
inherits a poset structure. 

 Given a finite quasi-poset $\Lambda$, a {\it height function} on $\Lambda$ is a mapping $\htt:\Lambda\to\mathbb Z$ with the
properties that $\lambda<\mu\implies \htt(\lambda)<\htt(\mu)$ and $\bar\lambda=\bar\mu\implies\htt(\lambda)=\htt(\mu)$. 
We also say that the function $\htt$ is a height function compatible with quasi-poset structure. Given $\lambda\in\Lambda$,  a sequence
 $\lambda=\lambda_n>\lambda_{n-1}>\cdots>\lambda_0$ is called a chain of length $n$ starting at $\lambda=\lambda_n$.
 Then the standard height function $\htt:\Lambda\to\mathbb N$ is defined by setting $\htt(\lambda)$ to be the maximal
 length of a chain beginning at $\lambda$.

 We can now review the notion of a  {\it stratifying system}
for a finite
$k$-algebra $A$ and a quasi-poset $\Lambda$. We follow the discussion in \cite[\S2]{DPS15} fairly closely.
As noted there, in the original discussion of stratifying system \cite{DPS98a}, what we define below was called a ``strict'' stratifying
system. As in \cite{DPS17a}, we drop the word ``strict'' in our treatment.

%By a quasi-poset, we mean a (usually finite) set $\Lambda$ with a transitive and reflexive relation $\leq$. An
%equivalence relation $\sim$ is defined on $\Lambda$ by putting $\lambda\sim\mu$ if and only if $\lambda\leq\mu$ and $\mu\leq\lambda$. Let $\bar\lambda$ be the equivalence class containing $\lambda\in\Lambda$. Of course, $\bar\Lambda$ inherits a poset structure.

 \begin{defn}\label{strat} Let $k$ be as in the previous section, and let $A$ be a finite $k$-algebra which is 
 projective over $k$. Let $\Lambda$ be
  a quasi-poset.
  For $\lambda\in\Lambda$, there is given a finitely generated $A$-module $\Delta(\lambda)$, projective as a
 $k$-module\footnote{The assumption that each $\Delta(\lambda)$ is projective over $k$ was (incorrectly) omitted
 in \cite{DPS15}, though it was used in that paper; see also footnote 3 in \cite{DPS17a}.},
 and a finitely generated, projective $A$-module $P(\lambda)$, together with an epimorphism $P(\lambda)\twoheadrightarrow \Delta(\lambda)$.  The following
conditions are assumed to hold:

\begin{itemize}
\item[(1)] For $\lambda,\mu\in\Lambda$,
$$\Hom_{ A}(P(\lambda), \Delta(\mu))\not=0\implies \lambda\leq\mu.$$

\item[(2)] Every irreducible $A$-module $L$ is a homomorphic image of some $\Delta(\lambda)$.

\item[(3)] For $\lambda\in\Lambda$, the $A$-module $P(\lambda)$ has a finite filtration by $ A$-submodules
with top section $\Delta(\lambda)$ and other sections of the form $\Delta(\mu)$ with $\bar\mu>\bar\lambda$.
\end{itemize}
When these conditions all hold, the data consisting of the $\Delta(\lambda)$, $P(\lambda)$, etc. form (by definition) a  {\it  stratifying system} for the category
 $ \Amod$ of finitely generated $A$-modules.\end{defn}

  The modules $\Delta(\lambda)$, $\lambda\in\Lambda$, are called
 the {\it standard modules } for the stratifying system. It is also clear that $\Delta(\lambda)_{k'}, P(\lambda)_{k'}, \dots$ is a
 stratifying system for $ A_{k'}$-mod {\it for any base change $k\to k'$}, provided $k'$ is a Noetherian commutative
ring. (Notice that condition (2) is redundant, if it is known that the direct sum of the projective modules in (3) is a
progenerator---a property preserved by base change.)

We record the following useful result.

\begin{lem}(\cite[Lem. 2.1]{DPS17a} )  \label{firstlemma} Suppose that $A$ has a  stratifying system as above. Let $\lambda,\mu\in\Lambda$.
  Then
   $$\Ext^1_A(\Delta(\lambda),\Delta(\mu))\not=0
 \implies\lambda<\mu.$$ \end{lem}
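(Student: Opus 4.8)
The plan is to prove the contrapositive-adjacent statement directly: given a stratifying system, show that any nonzero class in $\Ext^1_A(\Delta(\lambda),\Delta(\mu))$ forces $\lambda<\mu$. First I would record the two-step ordering information available from the axioms. From axiom (1), $\Hom_A(P(\nu),\Delta(\mu))\neq 0$ implies $\nu\leq\mu$; since $P(\lambda)\twoheadrightarrow\Delta(\lambda)$ and this map is an isomorphism on the top section, $\Hom_A(P(\lambda),\Delta(\lambda))\neq 0$, which is consistent. The key extra input is axiom (3): $P(\lambda)$ has a finite filtration with top section $\Delta(\lambda)$ and all other sections $\Delta(\nu)$ with $\bar\nu>\bar\lambda$. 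So I would let $K=\ker(P(\lambda)\twoheadrightarrow\Delta(\lambda))$; then $K$ is filtered by modules $\Delta(\nu)$ with $\bar\nu>\bar\lambda$.

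Next I would apply $\Hom_A(-,\Delta(\mu))$ to the short exact sequence $0\to K\to P(\lambda)\to\Delta(\lambda)\to 0$, obtaining the exact piece
\begin{equation*}
\Hom_A(K,\Delta(\mu))\longrightarrow\Ext^1_A(\Delta(\lambda),\Delta(\mu))\longrightarrow\Ext^1_A(P(\lambda),\Delta(\mu)).
\end{equation*}
Since $P(\lambda)$ is projective, the right-hand term vanishes, so $\Ext^1_A(\Delta(\lambda),\Delta(\mu))$ is a quotient of $\Hom_A(K,\Delta(\mu))$. Hence if the $\Ext^1$ is nonzero, then $\Hom_A(K,\Delta(\mu))\neq 0$. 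Using the $\Delta$-filtration of $K$ and a standard dévissage along the filtration (a map out of $K$ restricts nontrivially to some subquotient, or one uses the long exact sequences layer by layer), I get $\Hom_A(\Delta(\nu),\Delta(\mu))\neq 0$ for some $\nu$ appearing in the filtration of $K$, i.e. with $\bar\nu>\bar\lambda$.

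Then I would close the loop: $\Hom_A(\Delta(\nu),\Delta(\mu))\neq 0$ together with the surjection $P(\nu)\twoheadrightarrow\Delta(\nu)$ gives $\Hom_A(P(\nu),\Delta(\mu))\neq 0$, so by axiom (1), $\nu\leq\mu$, hence $\bar\nu\leq\bar\mu$. Combined with $\bar\nu>\bar\lambda$ this yields $\bar\lambda<\bar\mu$, and in particular $\lambda<\mu$ (strictness of the inequality of classes forces $\lambda\not\sim\mu$, and $\lambda\le\mu$). I expect the only mildly delicate point to be the dévissage step extracting a nonzero $\Hom_A(\Delta(\nu),\Delta(\mu))$ from $\Hom_A(K,\Delta(\mu))\neq 0$: one has to run the long exact $\Hom/\Ext^1$ sequences along the finite filtration of $K$ and note that a nonzero homomorphism out of $K$ cannot kill every filtration layer, so it is nonzero on the top layer of some sub-piece, which is one of the $\Delta(\nu)$. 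Since this is exactly the content of \cite[Lem. 2.1]{DPS17a} being cited, I would simply invoke that reference for the filtration bookkeeping if a self-contained argument is not desired, but the sketch above is the natural route.
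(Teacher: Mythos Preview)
Your proof is correct and is the standard argument; the paper itself does not reprove the lemma but simply cites \cite[Lem.~2.1]{DPS17a}, and your route (pull back along $P(\lambda)\twoheadrightarrow\Delta(\lambda)$, use projectivity to kill $\Ext^1_A(P(\lambda),\Delta(\mu))$, then d\'evissage on the $\Delta$-filtration of the kernel) is exactly the natural one. Your worry about the d\'evissage step is unwarranted: given a nonzero $f\colon K\to\Delta(\mu)$ and the filtration $0=K_0\subset\cdots\subset K_n=K$, take the least $j$ with $f|_{K_j}\neq 0$; then $f|_{K_j}$ factors through $K_j/K_{j-1}\cong\Delta(\nu_j)$, producing the required nonzero $\Hom_A(\Delta(\nu_j),\Delta(\mu))$.
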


 Given $A$-modules $X,Y,$ recall that the trace module ${\text{\rm trace}}_X(Y)$ of $Y$
 in $X$ is the submodule of $X$ generated by the images of all homomorphisms $Y\to X$.

 \begin{prop}\cite[Prop. 2.2]{DPS17a}     \label{prop2.2}  Suppose that $A$ has a  stratifying system as above, and let $\htt:\Lambda\to\mathbb Z$
 be a height function. Let $\lambda\in\Lambda$.  Then the $\Delta$-sections  arising from the
 filtration of  $P(\lambda)$ in Definition \ref{strat}(3)
 can be reordered (constructively, see its proof in {\it op. cit.}) so that, if we set
 $$ P(\lambda)_j=\text{\rm trace}_{P(\lambda)}\big(\bigoplus_{\htt(\mu)\geq j}P(\mu)\big),$$
 then $P(\lambda)_{j+1}\subseteq P(\lambda)_j$, for $j\in\mathbb Z$, and
 $$P(\lambda)_j/P(\lambda)_{j+1}$$
 is a direct sum of modules $\Delta(\mu)$ satisfying $\htt(\mu)=j$.
 \end{prop}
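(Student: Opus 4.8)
The plan is to isolate one key lemma about $\Delta$-filtered modules and the trace construction and then read the proposition off from it. Call a finitely generated $A$-module \emph{$\Delta$-filtered} if it admits a finite filtration with sections among the $\Delta(\mu)$. I will use repeatedly: (i) if $N$ is $\Delta$-filtered with all its sections $\Delta(\sigma)$ of height $<s$, then $\Hom_A(P(\nu),N)=0$ for all $\nu$ with $\htt(\nu)\ge s$, by a d\'evissage argument using that $P(\nu)$ is $A$-projective and $\Hom_A(P(\nu),\Delta(\sigma))\ne 0\Rightarrow\nu\le\sigma\Rightarrow\htt(\nu)\le\htt(\sigma)$ (Definition \ref{strat}(1)); (ii) if $N$ is $\Delta$-filtered with all sections of height $\ge j$, then $N=\operatorname{trace}_N\big(\bigoplus_{\htt(\mu)\ge j}P(\mu)\big)$, by induction on the filtration length, lifting $P(\sigma)\twoheadrightarrow\Delta(\sigma)$ onto a top section through the projective $P(\sigma)$; and (iii) $\Ext^1_A(\Delta(\mu),\Delta(\mu'))=0$ whenever $\htt(\mu)\ge\htt(\mu')$, immediately from Lemma \ref{firstlemma}.

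The key lemma is: for every $\Delta$-filtered $M$ and every $j\in\Z$, the submodule $O_j(M):=\operatorname{trace}_M\big(\bigoplus_{\htt(\mu)\ge j}P(\mu)\big)$ is $\Delta$-filtered with all sections of height $\ge j$, while $M/O_j(M)$ is $\Delta$-filtered with all sections of height $<j$. I would prove it by induction on the $\Delta$-filtration length of $M$, peeling off a top section $0\to M'\to M\to\Delta(\mu)\to 0$. If $\htt(\mu)<j$, then $M/O_j(M')$ is an extension of $\Delta(\mu)$ by $M'/O_j(M')$, hence $\Delta$-filtered with all sections of height $<j$; by (i) every map $P(\nu)\to M$ with $\htt(\nu)\ge j$ lands in $O_j(M')$, so $O_j(M)=O_j(M')$ and we are done by induction. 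If $\htt(\mu)\ge j$, then all sections of $M'/O_j(M')$ have height $<j\le\htt(\mu)$, so by (iii) and d\'evissage the extension $0\to M'/O_j(M')\to M/O_j(M')\to\Delta(\mu)\to 0$ splits; letting $M''\subseteq M$ be the preimage of the $\Delta(\mu)$-summand gives $0\to M''\to M\to M'/O_j(M')\to 0$ with $M''$ $\Delta$-filtered of sections $\ge j$ and quotient of sections $<j$, whence $M''\subseteq O_j(M)$ by (ii) and $O_j(M)\subseteq M''$ by (i) applied to $M/M''$, i.e.\ $O_j(M)=M''$. This last case, $\htt(\mu)\ge j$, is where I expect the real work.

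Granted the key lemma, the proposition is formal. First $P(\lambda)_{j+1}\subseteq P(\lambda)_j$ since $\{\htt(\mu)\ge j+1\}\subseteq\{\htt(\mu)\ge j\}$. As $\bigoplus_{\htt(\mu)\ge j}P(\mu)$ is projective, $\operatorname{trace}$ commutes with quotients, so $P(\lambda)_j/P(\lambda)_{j+1}=O_j\big(P(\lambda)/P(\lambda)_{j+1}\big)$; applying the key lemma at level $j+1$ (so $N:=P(\lambda)/P(\lambda)_{j+1}$ is $\Delta$-filtered with all sections of height $\le j$) and then at level $j$, we get that $X:=P(\lambda)_j/P(\lambda)_{j+1}$ is a $\Delta$-filtered submodule of $N$ with all sections of height $\ge j$. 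To see the sections of $X$ have height exactly $j$: reorder a $\Delta$-filtration of $X$ so that section heights are non-increasing from the bottom --- transposing an adjacent pair whose lower section has strictly smaller height amounts to splitting a length-two subquotient, an extension classified by an $\Ext^1$ between two $\Delta$'s which vanishes by (iii) --- and then the largest-height section of $X$ sits at the bottom, hence appears as a submodule of $X\subseteq N$; if its height exceeded $j$, then (i) applied to $N$ would force $\Hom_A$ from its projective cover into $N$ to vanish, contradicting the existence of that submodule. Finally, by (iii) a $\Delta$-filtered module all of whose sections share one height splits as their direct sum (induct, splitting off a top section using $\Ext^1$-vanishing), so $X=P(\lambda)_j/P(\lambda)_{j+1}$ is a direct sum of $\Delta(\mu)$ with $\htt(\mu)=j$. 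The ``reordering of the Definition \ref{strat}(3) sections'' is now obtained by reordering that filtration itself to non-increasing section heights (the same legitimate transpositions): this yields a $\Delta$-filtration refining $P(\lambda)_\bullet$ whose height-$j$ block is exactly $X$, the identification of its terms with the $P(\lambda)_j$ resting on uniqueness of the submodule $U\subseteq P(\lambda)$ with $U$ $\Delta$-filtered of sections $\ge j+1$ and $P(\lambda)/U$ $\Delta$-filtered of sections $\le j$, which one proves by a $\Hom$-vanishing argument like (i). The one genuine obstacle is the key lemma; everything else is bookkeeping with (i)--(iii).
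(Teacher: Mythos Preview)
Your argument is correct and uses the same two ingredients as the paper: the $\Ext^1$-vanishing of Lemma~\ref{firstlemma} to rearrange adjacent $\Delta$-sections, and axiom (1) of Definition~\ref{strat} to pin down the trace submodule. The organization differs, however. The paper works directly with the given filtration of $P(\lambda)$: starting from the maximal height $j$ occurring, it repeatedly transposes adjacent sections (via the $\Ext^1$ splitting) to push all height-$j$ sections to the bottom, then invokes axiom (1) to identify the resulting bottom piece with $P(\lambda)_j$, and inducts on the quotient $P(\lambda)/P(\lambda)_j$. You instead isolate a clean general statement---that for any $\Delta$-filtered $M$ the trace $O_j(M)$ is exactly the ``height $\geq j$ part''---and prove it by induction on filtration length, then read everything off. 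Your packaging is a bit heavier (the separate argument bounding the section heights of $X$ from above is automatic in the paper's top-down induction), but the abstract lemma is reusable and makes the identification of the trace filtration with a reordered $\Delta$-filtration more transparent.
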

 
 \begin{proof} First, fix $j$ maximal with a section $\Delta(\mu)$ appearing in $P(\lambda)$ such that $\htt(\mu)=j$.  Lemma \ref{firstlemma} implies that, whenever $M$ is a module with a submodule $D\cong \Delta(\nu)$ and $M/D\cong \Delta(\mu)$, with $\mu,\nu\in\Lambda$ and $\htt(\nu)\leq \htt(\mu)$, then $M$ is the direct sum of $D$ and a submodule $E\cong \Delta(\mu)$. Of course the quotient $M/E$ is isomorphic to $D$. This interchange of $E$ with $D$ can be repeatedly applied to adjacent $\Delta$-sections in a filtration (SS3) of $P(\lambda)$ to construct a submodule $P(\lambda)(j)$, a term in a modified filtration, which is filtered by modules $\Delta(\nu)$ with $\htt(\nu)= j$, and $P(\lambda)/P(\lambda)(j)$ filtered by modules $\Delta(\nu)$ with $\htt(\nu)<j$.
 Axiom (SS1) clearly gives $P(\lambda)(j)=P(\lambda)_j$, and $P(\lambda)_{j+1}=0$. Clearly,
 $P(\lambda)_j/P(\lambda)_{j+1}$ is a direct sum as required  by the proposition. We have not used projectivity of $P(\lambda)$, only its filtration properties. Induction applied to the quotient module $P(\lambda)/P(\lambda)_j$ completes the proof.\end{proof}

 In \cite[Thm. 1.2.8]{DPS98a}, it is shown that if an algebra $A$ over $k$ has a stratifying system, then $A$ has a
 standard stratification involving idempotent ideals. For our purposes, the following result using a height function
 $\htt$ on $\Lambda$ is more useful.

 \begin{thm}  \label{Morita 3.10} Let $A$ be a finite projective $k$-algebra which has a stratifying system
 $$\{\Delta(\lambda), P(\lambda)\}_{\lambda\in\Lambda}.$$
 Put $P:=\bigoplus_{\lambda\in\Lambda}P(\lambda)$ and $A':=\End_A(P)^\op.$
 Then

 \begin{itemize}
 \item[(i)] $A'$ is Morita equivalent to $A$ by means of the functor
 $$\Hom_A(P,-):A\text{-mod}\longrightarrow A'\text{-mod}.$$

 \item[(ii)] the category $A'-$mod has a stratifying system $\{\Delta'(\lambda), P'(\lambda)\}_{\lambda\in\Lambda}$
 corresponding to $\{\Delta(\lambda), P(\lambda)\}_{\lambda\in\Lambda}$ under the Morita
 equivalence of (i).

 \item[(iii)] $A'$ is standardly stratified with a defining sequence $0=J'_0\subseteq J'_1\subseteq\cdots\subseteq J'_n=A'$,
 where $n=\max_{\lambda\in\Lambda}\{\htt(\lambda)\}$, and $J'_i/J'_{i-1}$ is a direct sum of modules $\Delta'(\lambda)$, each with
 $\htt(\lambda)=n+1-i$.
  \end{itemize}
  \end{thm}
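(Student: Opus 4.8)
The plan is to deduce (i) from the fact that $P$ is a progenerator, then obtain (ii) by transporting the stratifying system along the Morita equivalence of (i), and finally get (iii) by building the defining sequence out of trace ideals and reading off its sections via Proposition \ref{prop2.2}.

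For (i) I would first check that $P=\bigoplus_{\lambda\in\Lambda}P(\lambda)$ is a progenerator of $A$-mod. It is projective, being a finite direct sum of the projective modules $P(\lambda)$. To see it is a generator it suffices, since $P$ is projective, to show $A$ is a direct summand of $P^{\oplus m}$ for some $m$, i.e.\ that the trace ideal $\sum_{f}\operatorname{Im}(f\colon P\to A)$ equals $A$. This ideal commutes with base change along $k\to k(\mathfrak m)$ for every maximal ideal $\mathfrak m$, so by Nakayama it is enough that every simple $A(\mathfrak m)$-module be a quotient of $P(\mathfrak m)$. But a simple $A(\mathfrak m)$-module is a simple $A$-module killed by $\mathfrak m$; by axiom (2) of Definition \ref{strat} it is a quotient of some $\Delta(\lambda)$, the surjection factoring through $\Delta(\lambda)(\mathfrak m)$, which is a quotient of $P(\lambda)(\mathfrak m)$, a summand of $P(\mathfrak m)$. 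With $P$ a progenerator, classical Morita theory gives that $\Hom_A(P,-)$ is an equivalence $A\text{-mod}\to\End_A(P)^{\op}\text{-mod}=A'\text{-mod}$ with quasi-inverse $P\otimes_{A'}(-)$, which is (i). Note also that $A'=\End_A(P)^{\op}$ is a $k$-direct summand of $M_m(A^{\op})$ (because $P$ is a summand of $A^{\oplus m}$), hence projective over $k$, so it is eligible to be an SSA.

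For (ii) I would write $F=\Hom_A(P,-)$ and set $\Delta'(\lambda):=F\Delta(\lambda)$, $P'(\lambda):=FP(\lambda)$, with $P'(\lambda)\twoheadrightarrow\Delta'(\lambda)$ the image under the exact functor $F$ of $P(\lambda)\twoheadrightarrow\Delta(\lambda)$. An exact equivalence preserves projectivity, simple objects, epimorphisms, $\Hom$-groups and finite filtrations; moreover $\Delta'(\lambda)=\Hom_A(P,\Delta(\lambda))$ is a summand of $\Delta(\lambda)^{\oplus m}$, hence projective over $k$, and likewise $P'(\lambda)$ is $k$-projective and finitely generated over $A'$. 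Axioms (1)--(3) of Definition \ref{strat} for $\{\Delta'(\lambda),P'(\lambda)\}$ then follow from those for $\{\Delta(\lambda),P(\lambda)\}$: (1) because $\Hom_{A'}(P'(\lambda),\Delta'(\mu))\cong\Hom_A(P(\lambda),\Delta(\mu))$; (2) because every simple $A'$-module is $FL$ for a simple $A$-module $L$, which is a quotient of some $\Delta(\lambda)$; and (3) by applying $F$ to the filtration of $P(\lambda)$. This gives (ii).

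For (iii) I note that under $F$ the left regular module corresponds to $P$, so that, as left $A'$-modules, $A'\cong FP=\Hom_A(P,P)=\bigoplus_{\mu\in\Lambda}\Hom_A(P,P(\mu))=\bigoplus_{\mu\in\Lambda}P'(\mu)$; I fix the resulting decomposition $1_{A'}=\sum_\mu e_\mu$ into orthogonal idempotents with $P'(\mu)=A'e_\mu$ (normalizing $\htt$ to take values in $\{1,\dots,n\}$). For $0\le i\le n$ put $f_i:=\sum_{\htt(\mu)\ge n+1-i}e_\mu$ and $J'_i:=A'f_iA'$: these are two-sided idempotent ideals, nested since the supports of the $f_i$ increase with $i$, and $J'_0=0$, $J'_n=A'$. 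Since $J'_i=\text{\rm trace}_{A'}(A'f_i)=\text{\rm trace}_{A'}\bigl(\bigoplus_{\htt(\mu)\ge n+1-i}P'(\mu)\bigr)$ and trace into a direct sum is the direct sum of the traces, we get $J'_i=\bigoplus_\mu P'(\mu)_{n+1-i}$ in the notation of Proposition \ref{prop2.2}, applied to the stratifying system of (ii) with the height function $\htt$. Hence
$$J'_i/J'_{i-1}=\bigoplus_\mu P'(\mu)_{n+1-i}/P'(\mu)_{n+2-i},$$
which by that proposition is a direct sum of modules $\Delta'(\nu)$ with $\htt(\nu)=n+1-i$, exactly as claimed. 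It remains to verify each $J'_i/J'_{i-1}$ is a standard stratifying ideal in $A'/J'_{i-1}$ in the sense of Definition \ref{SSI}: it is a two-sided ideal, idempotent as the image of the idempotent ideal $J'_i$ (in fact generated by the idempotent $\bar f_i$, since $\bar f_{i-1}=0$ in $A'/J'_{i-1}$); $A'/J'_i=\bigoplus_\mu P'(\mu)/P'(\mu)_{n+1-i}$ is $k$-projective because each summand is an iterated extension of the $k$-projective modules $\Delta'(\nu)$ with $\htt(\nu)<n+1-i$; and for left $A'/J'_{i-1}$-projectivity I would argue that, for $\nu$ with $\htt(\nu)=n+1-i$, axiom (3) forces every non-top $\Delta'$-section of $P'(\nu)$ to have height $>n+1-i$, so by Proposition \ref{prop2.2} $P'(\nu)/P'(\nu)_{n+2-i}\cong\Delta'(\nu)$, while $P'(\nu)_{n+2-i}$ is exactly the $\nu$-component $J'_{i-1}\cap A'e_\nu=J'_{i-1}e_\nu$ of $J'_{i-1}$; hence $\Delta'(\nu)\cong (A'/J'_{i-1})\bar e_\nu$ is a direct summand of the regular left module of $A'/J'_{i-1}$, thus projective. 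This establishes (iii).

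I expect the main obstacle to be the bookkeeping in part (iii): lining up the two-sided ideals $A'f_iA'$ with the per-summand filtrations supplied by Proposition \ref{prop2.2}, and in particular pinning down $\ker(P'(\nu)\to\Delta'(\nu))$ as the correct component of $J'_{i-1}$ via axiom (3), so that the sections $J'_i/J'_{i-1}$ are genuinely standard stratifying ideals in the successive quotients. By comparison, parts (i) and (ii) are routine — progenerator recognition together with classical Morita theory, then transport of structure across the equivalence — the only mild subtlety being the descent to residue fields needed to know $P$ is a progenerator over a general base ring.
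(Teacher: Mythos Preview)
Your proposal is correct and follows essentially the same strategy as the paper: use Proposition~\ref{prop2.2} to filter $P$ (equivalently $A'$) by height, then read off the defining sequence from those filtrations. The paper packages (iii) on the $A$-side, defining $J'_{n-j+1}:=\Hom_A(P,P_j)\subseteq\Hom_A(P,P)=A'$ and observing directly (via axiom~(1) and the rearranged filtration) that there are no maps $\bigoplus_{\htt(\mu)\ge j}P(\mu)\to P/P_j$, which makes this a two-sided ideal; the quotient $A'/J'_{n-j+1}\cong\Hom_A(P,P/P_j)\cong\End_A(P/P_j)$ then allows an induction. You instead work on the $A'$-side with explicit idempotents $e_\mu$ and trace ideals $J'_i=A'f_iA'$, and verify the three SSI conditions for each section directly. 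Under the Morita equivalence these constructions coincide: $A'f_iA'=\bigoplus_\mu P'(\mu)_{n+1-i}=\Hom_A(P,P_{n+1-i})$. Your treatment is more explicit than the paper's (which leaves details to the reader), and your verification that $\Delta'(\nu)\cong(A'/J'_{i-1})\bar e_\nu$ is projective is a clean way to handle what the paper does by induction.
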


  \begin{proof} The proof is an easy application of Proposition \ref{prop2.2}: Filter each $P(\lambda)$ and $P$ itself
  by standard modules $\Delta(\mu)$'s according to height as in Proposition \ref{prop2.2}. Note that there are
  no module homomorphisms $\oplus_{\htt(\mu)\geq j}P(\mu)\to P/P_j$ by axiom (1) and the (rearranged) version of axiom (3) in
  Definition \ref{strat}, where $P_j:=\oplus_{\lambda\in\Lambda}P(\lambda)_j$. It follows that
  $$\Hom_A(P,P_j)\subseteq\Hom_A(P,P)=A'$$
  is an ideal in $A'$, which we set to be $J'_{n-j+1}$. We also have the short exact sequence
  $$0\to\Hom_A(P,P_j)\longrightarrow\Hom_A(P,P)\longrightarrow\Hom_A(P,P/P_j)\to 0$$
  which identifies with the short exact sequence $0\to J'_{n-j+1}\to A'\to A'/J'_{n-j+1}\to 0$. Note that
  $P/P_j$ is projective over $k$ since it is filtered by various of the $\Delta(\mu)$. It follows that
  $$A'/J'_{n-j+1}\cong \Hom_A(P,P/P_j)$$
  is projective over $k$ (since $P$ is projective over $A$).

 The remaining details follow by induction and are left to the reader. Note that
 $$A/J_1\cong \Hom_A(P,P/P_n)\cong\Hom_A(P/P_n,P/P_n)
 \cong\Hom_{A/J_1}(P/P_n,P/P_n).$$
  \end{proof}

 \begin{rem}\label{rem3.11} (a) $A$ itself is also standardly stratified by a sequence of defining ideals
 $J_i$ corresponding to the sequence $J'_i$ under the Morita equivalence. However, we may have
 less control over the summands of various $J_i/J_{i-1}$. A remedy is to replace $A$ with $A'$.

 (b) Assume the above replacement has been made. There is another useful choice of $E$ and $P$
 in Proposition \ref{prop3.4} closer to Proposition \ref{prop1}:  Take $e=\sum e_\lambda\in A$. (Recall that $A$ is the now
 relabeled $A'$; here $e_\lambda$ is the projection $P\to P(\lambda)$ in the construction of $A'$.)  Let
 $E=\sum ke_\lambda$. For $P$ in Proposition \ref{prop3.4}, we will use an $E$-module $Q$ constructed as the
 direct sum of $E$-modules $Q_\lambda\subseteq e_\lambda A\subseteq eA$, $\lambda\in\Lambda$.
 Each $Q_\lambda$ is a free
 $k$-module spanned by elements $a_{\lambda,\mu,s}\in e_\lambda J_1$ 
 ($\cong \Hom_A(Ae_\lambda,J_1)$) where
 $\mu\in\Lambda$ and $s$ belongs to a set of integers indexed by the pair $\lambda,\mu$, such that 
  $J_1=\bigoplus_{\lambda,\mu,s} Ae_{\lambda,\mu,s} a_{\lambda,\mu,s}$ and
 $Ae_\lambda a_{\lambda,\mu,s}\cong\Delta(\lambda).$
  As a result, we get the hypothesis of Proposition \ref{prop3.4} with $E$ as above,
 $Q $ ($=P$ in Proposition \ref{prop3.4})
  a direct sum of $E$-modules isomorphic to various $ke_\lambda$.
  Thus, $Q$ is projective over $E$. A particular interest of this example
 is that the stratifying system can be reconstructed from this description.
\end{rem}
 
 \section{Some Morita equivalences.} Let $\mathcal Z={\mathbb Z}[t,t^{-1}]$ be the ring of
Laurent polynomials over the ring of integers $\mathbb Z$, and let $K$ be its fraction field. 
Let $\mathscr G=\{G(q)\}$ be a family of finite groups of Lie type, in the sense of
\cite[Section 68.22]{CR87}. The groups $G(q)$ each have a BN-pair structure and there is associated a
finite Coxeter system $(W,S)$ (which is independent of $q$).
For simplicity, we ignore the Ree and Suzuki groups in this paper. We will consider the generic Hecke algebra $H$ over $\mathcal Z$ with generators $T_s$, $s\in S$. It has
$\mathcal Z$-basis $\{T_w\}_{w\in W}$ and is defined by relations
\begin{equation}\label{relations}
T_sT_w=\begin{cases} T_{sw}, \quad{\text{\rm if}}\,\, sw>w;\\ 
t^{2c_s}T_{sw}+ (t^{2c_s}-1)T_w, \quad{\text{\rm if}}\,\, sw<w.\end{cases} 
\end{equation}
Recall the index parameters $c_s$, $s\in S$, are given by $[B(q): {^sB}(q)\cap B(q)]=q^{c_s}$, where $B(q)$ is a Borel subgroup. For any commutative
$\mathcal Z$-algebra $R$, let $H_{R}:=R\otimes_{\mathcal Z}H$.

The distinct irreducible left (or right) $H_K$-modules are indexed by a finite set, which we denote by $\Lambda$. For
$\lambda\in \Lambda$, let $E_K(\lambda)$ denote the associated irreducible left $H_K$-module.\footnote{Thus, $\Lambda$ also indexes the irreducible
$\mathbb QW$-modules. For $^2F_4$, which is not allowed here, the algebras $ \mathbb Q W$ and $H_K$ are not split.}

Recall the pre-order $\leq_{LR}$ on $W$ \cite[Ch. 8]{Lus03}. Its associated equivalence classes are called two-sided (Kazhdan-Lusztig) cells.   From its definition, $\leq_{LR}$ induces a pre-order, denoted again by $\leq_{LR}$, on the set $\Xi$ of two-sided
cells in $W$.

The $\mathcal Z$-spans of the Kazhdan--Lusztig basis elements over the two-sided 
cells  are the sections in a filtration of $H$ by two-sided ideals, and there is a unique decomposition of the (split) semisimple algebra
$H_K$ as a direct product $\mathfrak C_1\times\cdots\times\mathfrak C_r$ of (semisimple) two-sided ideals, one for each
two-sided cell section. This provides a corresponding decomposition of $\Lambda$. Namely, given $\lambda\in\Lambda$, let $[\lambda]\subseteq \Lambda$, be the set of those $\mu$
such that $E_K(\mu)$ and $E_K(\lambda)$ are modules for the same $\mathfrak C_i$.
Each $\lambda\in\Lambda$ determines a unique two sided cell, which we denote by $c[\lambda]$: thus, $E_K(\lambda)$ is a left $H_K$-module for a unique ideal $\mathfrak C_i$ determined by $c[\lambda]$. Then $\leq_{LR}$ gives rise to a corresponding pre-order on $\Lambda$, setting $\lambda\leq_{LR}\mu$ if and
only if $c[\lambda]\leq_{LR}c[\mu]$. Similarly, the opposite pre-order $\leq^{\op}_{LR}$ may be defined
on $\Lambda$. 

   Let
$\text{ht}:\Lambda\to\mathbb Z$ be a height function compatible with the quasi-poset structure defined by
$\leq_{LR}^{\op}$. (See the discussion above Definition \ref{strat}.) 
For convenience, we
can assume that $\htt$ has image in $\mathbb N$. 

There is also a pre-order $\leq_{L}$ on $W$ whose associated equivalence classes are called left cells. (It is finer
than the pre-order $\leq_{LR}$ on $W$, i. e., $x\leq_Ly\implies x\leq_{LR} y$, for $x,y\in W$.) Let $\Omega$ be the set of left cells \cite{Lus03}.  For each left cell $\omega\in\Omega$, there is a corresponding
 left cell module $S(\omega)\in \Hmod$; see \cite[below Rem. 4.8]{DPS17a} or \cite[\S8.3]{Lus03}. 
Observe, by definition, that $\htt$ takes a constant value on left cells occurring in the same two-sided cell. 
 The $(H,H)$-bimodule decompositon above, of $H_K$ into the direct sum of all two-sided cell modules, can be refined
 into   a (left) $H$-module decomposition of $H_K$ into the direct sum over $\omega\in\Omega$ of all left cell modules
 $S(\omega)$. Consequently, given
two left cells $\omega,\omega'$, if $S(\omega)_K$ and $S(\omega')_K$ have a common composition factor,
then $\omega$ and $\omega'$ are contained in the same two-sided cell, and so $\htt(\omega)=\htt(\omega')$.
In particular, the function $\htt$ is well-defined on the set of left cells. Also, $\leq^\op_{LR}$ makes sense on
$\Omega$, just as it does on $W$, $\Lambda$ and $\Xi$.  In addition, $\htt:\Omega\to\mathbb Z$ is compatible with $\leq^\op_{LR}$.

As in \cite[\S3]{DPS17a},
 we will use the ``dual left cell modules'' $S_\omega:= \Hom_{\mathcal Z}(S(\omega),\mathcal Z)\in\modH$. Now for an integer
 $i$, let $\mathscr S_i$ be the full additive subcategory of $\modH$ whose objects are finite direct sums of 
 various (repetitions allowed) dual left cell modules $S_{\omega}$ with $\htt(\omega)=i$.
This notation agrees with that in \cite[op. cit.]{DPS17a} (except that our $X_j$ below would be denoted $X^j$ 
there). The (full) additive category $\mathscr A(\mathscr S)$ 
defined there consists of objects $X$, with a (finite) filtration $\cdots \supseteq X_j\supseteq X_{j-1}\supseteq \cdots
$ by right $H$-modules $X_j$ satisfying
$X_j/X_{j-1}\in \mathscr S_j$, for each $j$.  Of course, these filtrations depend on the height function $\htt$.

Note that the smallest nonzero filtration term $X_i$ of $X$ has the property that $X_i\in\mathscr S_i$. In
\cite{DPS17a} we constructed finite dimensional right $H$-modules $X_\omega$, $\omega\in \Omega$.
See, in particular, the discussion immediately above \cite[Thm. 4.9]{DPS17a}, which uses \cite[Thm. 4.7]{DPS17a}.
There is an exact category $({\mathscr A}(\mathscr S), {\mathscr E}({\mathscr S}))$ constructed
in \cite[Construction 3.8]{DPS17a}. Here $\mathscr E(\mathscr S)$ is defined by all short exact sequences (in $H$-mod) of objects in $\mathscr A(\mathscr S)$ which remain exact, and are even split, when passing to a section defined by any $\mathscr S_j$.
 Now put
\begin{equation}\label{dagger}
T^\dagger=\bigoplus_{\omega\in\Omega}X_\omega^{\oplus m_\omega}\end{equation}
for any fixed set $\{m_\omega\}_{\omega\in\Omega}$ of positive integers. The following properties hold:

\medskip
(1) For all $\omega\in\Omega$,  $X_\omega\in\mathscr A(\mathscr S)$, and its smallest nonzero filtration (with respect to $\htt$) term is isomorphic to $S_\omega$;
 
 \smallskip
 (2) For all $\omega\in \Omega$, 
 $$\Ext^1_{\mathscr E(\mathscr S)}(S_\omega, T^\dagger)=0.$$ 
 We remark that if $X\in \mathscr A(\mathscr S)$, all exact sequences
 $X_j\hookrightarrow X\twoheadrightarrow X/X_j$ belong to $\mathscr E(\mathscr S)$, $j\in\mathbb N$.
 If $Y\in \mathscr A(\mathscr S)$ is such that $\Ext^1_{\mathscr E(\mathscr S)}(S,Y)=0$ for all $S\in\mathscr S$,
 then $\Hom_H(-,Y)$ applied to an exact sequence in $\mathscr E(\mathscr S)$ yields an exact sequence;
 see \cite[Lem. 3.10]{DPS17a}. This will be used in the proof of the following result.

\begin{thm}\label{newlemma} Let $X\in\mathscr A(\mathscr S)$ satisfy 
$\Ext^1_{\mathscr E(\mathscr S)}(S_\omega,X)=0$ for all $\omega\in\Omega$. Let $T^\dagger$ be as in (\ref{dagger}). Put $T^{\dagger\prime}= T^\dagger\oplus X$. Then the $\mathcal Z$-endomorphism algebras
$A^\dagger=\End_H(T^\dagger)$ and $A^{\dagger\prime}=\End_H(T^{\dagger\prime})$ are Morita equivalent.
A specific Morita equivalence 
$$A^{\dagger\prime}-{\text{\rm mod}}\overset\sim\longrightarrow A^\dagger-{\text{\rm mod}}$$
 is given on objects
by $N'\mapsto eN'$, where $e:T^{\dagger\prime}\to T^\dagger\subseteq T^{\dagger\prime}$ is projection from
$T^{\dagger\prime}$ to $T^\dagger$ along $X$,  and $eA^{\dagger\prime}e$ is identified with $A^\dagger$. With
this identification $e(T^{\dagger\prime})_j=T^\dagger_j$ for each $j\in\mathbb N$.
 \end{thm}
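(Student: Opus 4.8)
The plan is to deduce the theorem from the standard Morita criterion already used in the proof of Lemma~\ref{badlemma}: if $e$ is an idempotent in an algebra $B$ with $BeB=B$, then $B$ is Morita equivalent to $eBe$, an equivalence $B\text{-mod}\to eBe\text{-mod}$ being $N'\mapsto eN'$. Here I would take $B=A^{\dagger\prime}=\End_H(T^{\dagger\prime})$ and let $e\in B$ be the projection of $T^{\dagger\prime}=T^\dagger\oplus X$ onto $T^\dagger$ along $X$; restriction of endomorphisms to $T^\dagger$ then identifies $eBe$ with $A^\dagger=\End_H(T^\dagger)$, and a standard Peirce-type computation shows $BeB$ is exactly the set of finite sums of $H$-endomorphisms of $T^{\dagger\prime}$ that factor through $T^\dagger$. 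Thus the whole statement reduces to proving that $X$ is a direct summand of $(T^\dagger)^{\oplus N}$ for some $N\geq 1$: granting this, the complementary projection $p_X=1-e$ of $T^{\dagger\prime}$ onto $X$ factors through $X$, hence through $(T^\dagger)^{\oplus N}$, hence is a finite sum of endomorphisms factoring through $T^\dagger$; so both $e$ and $p_X$, and therefore $1=e+p_X$, lie in $BeB$, whence $BeB=B$.

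The first step is to observe that $\Hom_H(-,X)$ is exact on the exact category $(\mathscr A(\mathscr S),\mathscr E(\mathscr S))$. By hypothesis $\Ext^1_{\mathscr E(\mathscr S)}(S_\omega,X)=0$ for all $\omega\in\Omega$, so by additivity $\Ext^1_{\mathscr E(\mathscr S)}(S,X)=0$ for every object $S$ of $\mathscr S$; hence, by the remark preceding the theorem (i.e.\ \cite[Lem.~3.10]{DPS17a}), $\Hom_H(-,X)$ carries short exact sequences in $\mathscr E(\mathscr S)$ to exact sequences. The identical reasoning applies with $X$ replaced by any finite direct sum of copies of the modules $X_\omega$, using property~(2).

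Next I would show that every $Y\in\mathscr A(\mathscr S)$ admits an admissible monomorphism in $\mathscr E(\mathscr S)$ into a finite direct sum $I$ of copies of the $X_\omega$, arguing by induction on the length of the $\mathscr A(\mathscr S)$-filtration of $Y$: the smallest nonzero filtration term $Y_i\in\mathscr S_i$ is a finite direct sum of modules $S_\omega$, which by property~(1) (the bottom inclusions $S_\omega\hookrightarrow X_\omega$) embeds --- and, one checks, admissibly --- into the corresponding finite direct sum $I_i$ of copies of the $X_\omega$; since $\Ext^1_{\mathscr E(\mathscr S)}(Y/Y_i,I_i)=0$ by the second step applied to $I_i$, the inclusion $Y_i\hookrightarrow I_i$ extends along $Y_i\hookrightarrow Y$ to a map $Y\to I_i$, and pairing this with the inductively constructed embedding $Y/Y_i\hookrightarrow\bar I$ precomposed with $Y\twoheadrightarrow Y/Y_i$ yields a map $Y\to I_i\oplus\bar I$ which a routine sectionwise check shows is an admissible monomorphism. (This is essentially the ``enough relative injectives'' content of \cite{DPS17a}; see Construction~3.8 and Theorems~4.7,~4.9 there.) Applying this with $Y=X$ gives an admissible short exact sequence $0\to X\overset{j}{\longrightarrow}I\to C\to 0$ in $\mathscr E(\mathscr S)$ with $C\in\mathscr A(\mathscr S)$ and $I$ a finite direct sum of copies of the $X_\omega$; since $\Hom_H(-,X)$ is exact on it (second step), $j^{*}\colon\Hom_H(I,X)\to\Hom_H(X,X)$ is surjective, so a preimage of $\id_X$ is a retraction of $j$ and $X$ is a direct summand of $I$. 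As every $m_\omega>0$, $I$ is a direct summand of $(T^\dagger)^{\oplus N}$ for $N$ large, so $X\mid(T^\dagger)^{\oplus N}$, as required.

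It remains to address the last assertions. The identification $eA^{\dagger\prime}e=A^\dagger$ was noted above; and under the natural direct-sum filtration of $T^{\dagger\prime}=T^\dagger\oplus X$ in $\mathscr A(\mathscr S)$ one has $(T^{\dagger\prime})_j=(T^\dagger)_j\oplus X_j$ (with $X_j$ the $j$-th term of the $\mathscr A(\mathscr S)$-filtration of $X$), and $e$ annihilates $X_j$ while fixing $(T^\dagger)_j$, so $e(T^{\dagger\prime})_j=T^\dagger_j$. The step I expect to be the main obstacle is the third one, i.e.\ showing $X\mid(T^\dagger)^{\oplus N}$: once the exactness of $\Hom_H(-,X)$ (immediate from the hypothesis) and the ``enough relative injectives'' input from \cite{DPS17a} are in hand, the splitting of $j$ and then the whole Morita equivalence are formal; the construction of the admissible embedding $X\hookrightarrow I$ and the verification that it is admissible are where the real work lies.
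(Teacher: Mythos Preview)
Your approach is correct and genuinely different from the paper's. You reduce the Morita equivalence to the stronger intermediate fact that $X$ is a direct summand of some $(T^\dagger)^{\oplus N}$, obtaining this by constructing an admissible embedding $X\hookrightarrow I$ (with $I$ a finite direct sum of $X_\omega$'s) and then splitting it using the $\Ext^1$-vanishing hypothesis on $X$. The paper instead avoids embedding $X$ altogether: it argues directly that $A^{\dagger\prime}e$ is a projective generator by observing that (i) each $S_\omega$ sits as the bottom filtration term of $X_\omega$, hence there is an inflation $S_\omega\hookrightarrow T^\dagger$; (ii) $\Hom_H(-,T^{\dagger\prime})$ is exact on $\mathscr E(\mathscr S)$ (since both $T^\dagger$ and $X$ satisfy the $\Ext^1$-vanishing), so this inflation yields a surjection $A^{\dagger\prime}e=\Hom_H(T^\dagger,T^{\dagger\prime})\twoheadrightarrow\Hom_H(S_\omega,T^{\dagger\prime})$; and (iii) the same exactness filters $_{A^{\dagger\prime}}A^{\dagger\prime}$ by such modules $\Hom_H(S_\omega,T^{\dagger\prime})$, so every irreducible $A^{\dagger\prime}$-module is a quotient of $A^{\dagger\prime}e$. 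This is shorter and uses only the obvious inflations $S_\omega\hookrightarrow X_\omega$ and filtration-term inclusions, with no inductive construction needed. Your route, on the other hand, yields the extra information $X\mid(T^\dagger)^{\oplus N}$, which is of independent interest.

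One comment on your ``routine sectionwise check'' that $\phi:Y\to I_i\oplus\bar I$ is an admissible monomorphism: this is where your argument is least routine, and it is cleaner to argue via pushouts in the exact category rather than sectionwise. Form the pushout $P$ of the admissible mono $Y_i\hookrightarrow Y$ along the admissible mono $Y_i\hookrightarrow I_i$; then both $Y\to P$ and $I_i\to P$ are admissible monos. The admissible sequence $I_i\hookrightarrow P\twoheadrightarrow Y/Y_i$ splits since $\Ext^1_{\mathscr E(\mathscr S)}(Y/Y_i,I_i)=0$, giving $P\cong I_i\oplus Y/Y_i$; composing the admissible mono $Y\hookrightarrow P$ with $\id_{I_i}\oplus h:I_i\oplus Y/Y_i\hookrightarrow I_i\oplus\bar I$ (admissible since $h$ is) gives the desired admissible embedding without any direct sectionwise verification.
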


\begin{proof} Since $X$ lies in $\mathscr A(\mathscr S)$, it has a  height filtration whose sections are direct sums of dual left cell modules $S_\omega$ (all having the same height).  For each $\omega\in\Omega$, $S_\omega$ appears as the lowest term in the filtration of the summand $X_\omega$
of $T^\dagger$. In particular, there is an inflation $S_\omega\to T^\dagger$.
Apply the remark made immediately above the statement of the theorem, with $Y=T^{\dagger\prime}
= T^\dagger\bigoplus X$.
There is a (nonzero) surjection (in $A^{\dagger\prime}$-mod)
$$A^{\dagger \prime}e=\Hom_H(T^\dagger, T^{\dagger\prime})\twoheadrightarrow\Hom_H(S_\omega,T^{\dagger\prime})$$
	using condition (2) above and the $\Ext^1_{\mathscr E(\mathscr S)}$-vanishing condition on 
	$X$ in the hypothesis of the theorem.  Observe $T^{\dagger\prime}=T^\dagger\oplus X\in{\mathscr A}(\mathscr S)$, since 
	$T^\dagger\in\mathscr A(\mathscr S)$ by (1) and $X\in\mathscr A(\mathscr S)$ by hypothesis. As an object in
	${\mathscr A}({\mathscr S})$,  $T^{\dagger\prime}$
	has a (height compatible) filtration with sections direct sums of modules $S_\omega$, $\omega\in\Omega$.
	Using the $\Ext^1_{\mathscr E(\mathscr S)}(-, T^{\dagger\prime})$-vanishing discussed above, we find that 
	the left regular module $_{A^{\dagger\prime}}A^{\dagger\prime}={_{A^{\dagger\prime}}\End}_H(T^{\dagger\prime})$ has a filtration with sections which are direct sum of  $A^{\dagger\prime}$-modules 
		$\Hom_H(S_\omega,T^{\dagger\prime})$. Thus, any
	irreducible $A^{\dagger\prime}$-module is a homomorphic image of some module
	 $\Hom_H(S_\omega, T^{\dagger\prime})$,
	and, thus,  by the surjection displayed above, of $A^{\dagger \prime}e$. 
	It follows 
	$A^{\dagger\prime}e$ is a projective generator for $A^{\dagger\prime}$-mod. 
The rest of the argument is either obvious, or follows from standard Morita theory, as in \cite[\S3.12]{Jac89}. 
For example, the functor $M\mapsto \Hom_{A^{\dagger\prime}}(A^{\dagger \prime}e,M)\cong eM$ from $A^{\dagger\prime}$-mod to
$\End_{A^{\dagger\prime}}(A^{\dagger\prime}e)^{\text{\rm op}}{\text{\rm -mod}}\cong eA^{\dagger\prime}e$-mod is an equivalence of
	categories.
\end{proof}

\begin{rem}  \label{rem4.2} The identification $e(T^{\dagger \prime})_j=T^\dagger_j$ above also yields an identification of the right $H$-modules
$e(T^{\dagger\prime}/(T^{\dagger\prime})_j)=T^\dagger/T^\dagger_j$. Thus, the given Morita equivalence takes the 
two-sided ideal
$$\Hom_H(T^{\dagger\prime}/(T^{\dagger^\prime})_j, T^{\dagger\prime})$$
 of $A^{\dagger\prime}$ to the two-sided ideal
$\Hom_H(T^\dagger/T^\dagger_j,T^\dagger)$ of $A^\dagger$. This will be very useful in our later discussion  of
stratified algebras and the ideals in their defining sequences.\end{rem}

In the next theorem, we work with the  commutative algebra $\sZ_{\mathfrak q}$
  the localization of $\sZ$ at a height one prime ideal $\mathfrak q=(\Phi_{2e})$, generated by a
cyclotomic polynomial $\Phi_{2e}$, $e\not=2$. (In \cite{DPS15}, the algebra $\sZ_{\mathfrak q}$ is denoted
$\sQ$.) Let $\widetilde H:= H_{\mathfrak q}$, the Hecke algebra over $\sZ_{\mathfrak q}$
with basis $T_w$, $w\in W$, and relations (\ref{relations}).  (In \cite{DPS15}, our $H$  here  is denoted 
$\mathcal H$, and $\widetilde H$ here is denoted $\widetilde{\mathcal H}$ there.)

 Recall that in \cite[Thm. 5.6]{DPS15}
the  $\sZ_{\mathfrak q}$-algebra $\widetilde{\mathcal A}^+$ is realized as an $\widetilde H$-endomorphism algebra 
\begin{equation}\label{endoalgebra}
\widetilde{\mathcal A}^+:=\End_{\widetilde H}(\wT^+),\quad{\text{\rm where}}\quad
 \wT^+:=\bigoplus_{\omega\in\Omega}\wT_\omega, \end{equation}
 where each $\wT_\omega $ is a right $\widetilde H$-module:
 If $\omega$ is a left cell containing the longest word $w_{\lambda,0}$ in a parabolic subgroup $W_\lambda$, then $\wT_\omega$
 in (\ref{endoalgebra})
 is a right $\widetilde H$ ``$q$-permutation'' module
  $x_\lambda\widetilde H$. (See \cite[(7.6.1)]{DDPW08}.) 
  (Actually, $\wT^+$ above allows for repetition of permutation modules as in (\ref{dagger}), whereas the
  original arguments in \cite{DPS15} do not. This does not affect the argument in \cite{DPS15}, and the endomorphism 
  algebra $\widetilde{\mathcal A}^+$ there
   is Morita equivalent to $\widetilde{\mathcal A}^+$ here in an obvious way.)
  Otherwise,
 the $\widetilde H$-modules $\wT_\omega$
are certain  right $\wH$-modules inductively constructed in \cite[\S5C; notation of \S5D]{DPS15} and called
$\widetilde X_\omega$, 
 indexed by the remaining left cells $\omega\in\Omega.$
  As pointed out in \cite[\S5C]{DPS15}, 
$\wT_\omega$ may be constructed as $(\mathcal T_\omega)_{\sQ}=(\mathcal T_\omega)_{\mathfrak q}$, where $\mathcal T_\omega$ 
is either a $q$-permutation modules $x_\lambda H$ or a $\mathcal Z$-free right $H$-module with certain
filtration properties.  Accordingly, we may write
\begin{equation}\label{endoalgebra2}
\mathcal A^+:=\End_H(\mathcal T^+), \quad{\text{\rm where}}\quad {\mathcal T}^+:=\bigoplus _{\omega\in\Omega}
\mathcal T_\omega\end{equation}
 with $\wT^+= \mathcal T^+_{\mathfrak q}:= (\mathcal T^+)_{\mathfrak q}$ and $\widetilde{\mathcal A}^+:=\mathcal A^+_{\mathfrak q}.$ Each of the localizations $(-)_{\mathfrak q}$ can be written as $(-)_{\sQ}$ in the
 notation of \cite{DPS15}. We also mention 
 that the expression ${\mathcal T}_\omega$ above is sometimes
 written as  $X_\omega$ in \cite{DPS15}.
   
    \medskip
  The proof of the following result requires a height function $\htt$ which is compatible with $\leq^\op_{LR}$, as used
  in \cite{DPS17a}. We quote below results from \cite[\S 5C,D]{DPS15}, which used a specific compatible height 
  function $f$ (called a sorting function in \cite{GGOR03}).
  However, this particular choice of height function is unnecessary.

  {\it In the following theorem, we assume that the parameters $c_s$ in (\ref{relations}) are all equal to 1.} However,
  we expect that the theorem holds for any choice of parameters that corresponds to a family of finite groups of
  Lie type. 

\begin{thm}\label{Steinberg}  Let $\mathcal A^+,\mathcal T^+, \mathfrak q$ be as above, and let $A^\dagger$ and $T^\dagger$
 be as in Theorem \ref{newlemma}. Then $\widetilde{\mathcal A}^+= {\mathcal A}^+_{\mathfrak q}$ is Morita equivalent to  $A^\dagger_{\mathfrak q}$ via an equivalence 
in which the height filtrations of $T^\dagger_{\mathfrak q}$ and $\widetilde{\mathcal T}^+=\mathcal T^+_{\mathfrak q}$
 all correspond. 
 \end{thm}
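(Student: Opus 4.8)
The plan is to show that the two endomorphism algebras $\widetilde{\mathcal A}^+ = \mathcal A^+_{\mathfrak q}$ and $A^\dagger_{\mathfrak q}$ are both obtained, up to Morita equivalence, from a common ``large'' endomorphism algebra by deleting redundant summands, and that Theorem \ref{newlemma} furnishes exactly the tool needed to compare them. First I would recall from \cite{DPS15} (in the formulation of \S5C,D, with the specific sorting function $f$ replaced by an arbitrary height function $\htt$ compatible with $\leq^{\op}_{LR}$, which as noted is harmless) that each $\widetilde{\mathcal T}_\omega = (\mathcal T_\omega)_{\mathfrak q}$ lies in $\mathscr A(\mathscr S)$ over $\widetilde H$ and has smallest nonzero height-filtration term isomorphic to $S_\omega$; this is precisely property (1) preceding Theorem \ref{newlemma}. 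Thus over $\sZ_{\mathfrak q}$ the module $\widetilde{\mathcal T}^+$ is a legitimate candidate for the role played by $T^{\dagger\prime}$ (or a summand thereof) in that theorem.

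The key step is to identify $\widetilde{\mathcal T}^+$ and $T^\dagger_{\mathfrak q}$ as mutually ``additively cofinal'' inside $\mathscr A(\mathscr S)$: each is a finite direct sum of modules whose lowest filtration sections run over all of $\Omega$, and each satisfies the relevant $\Ext^1_{\mathscr E(\mathscr S)}$-vanishing against the other. Concretely, I would check that $\widetilde{\mathcal T}^+$ satisfies $\Ext^1_{\mathscr E(\mathscr S)}(S_\omega, \widetilde{\mathcal T}^+) = 0$ for all $\omega$ --- this follows from the construction of the $\widetilde X_\omega$ in \cite[\S5C,D]{DPS15}, whose defining filtration properties are exactly what give the $\Ext^1$-vanishing in the exact category $(\mathscr A(\mathscr S), \mathscr E(\mathscr S))$, together with the fact that $q$-permutation modules $x_\lambda\widetilde H$ are relatively injective/projective in the appropriate sense. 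Granting this, set $T^{\dagger\prime} := T^\dagger_{\mathfrak q} \oplus \widetilde{\mathcal T}^+$ and apply Theorem \ref{newlemma} twice: once with $X = \widetilde{\mathcal T}^+$, showing $\End_{\widetilde H}(T^{\dagger\prime})$ is Morita equivalent to $A^\dagger_{\mathfrak q} = \End_{\widetilde H}(T^\dagger_{\mathfrak q})$ via projection along $\widetilde{\mathcal T}^+$; and once with the roles reversed, with $X = T^\dagger_{\mathfrak q}$ (which lies in $\mathscr A(\mathscr S)$ by property (1) and satisfies the needed $\Ext^1$-vanishing since $T^\dagger_{\mathfrak q}$ is a sum of the $X_\omega$ and property (2) holds after the flat base change $\sZ \to \sZ_{\mathfrak q}$), showing $\End_{\widetilde H}(T^{\dagger\prime})$ is Morita equivalent to $\widetilde{\mathcal A}^+ = \End_{\widetilde H}(\widetilde{\mathcal T}^+)$. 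Composing the two equivalences yields the desired Morita equivalence between $\widetilde{\mathcal A}^+$ and $A^\dagger_{\mathfrak q}$.

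Finally, I would verify that the height filtrations correspond. By the last sentence of Theorem \ref{newlemma} and Remark \ref{rem4.2}, each application of the theorem carries the filtration term $e(T^{\dagger\prime})_j$ to $T^\dagger_j$ (resp.\ to $\widetilde{\mathcal T}^+_j$) and the corresponding quotient two-sided ideals $\Hom_{\widetilde H}(T^{\dagger\prime}/(T^{\dagger\prime})_j, T^{\dagger\prime})$ to $\Hom_{\widetilde H}(T^\dagger_{\mathfrak q}/(T^\dagger_{\mathfrak q})_j, T^\dagger_{\mathfrak q})$ (resp.\ the analogous ideal in $\widetilde{\mathcal A}^+$). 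Since both equivalences factor through the common algebra $\End_{\widetilde H}(T^{\dagger\prime})$ and its fixed filtration $\{(T^{\dagger\prime})_j\}$, the composite matches the height filtrations of $T^\dagger_{\mathfrak q}$ and $\widetilde{\mathcal T}^+ = \mathcal T^+_{\mathfrak q}$ term by term, as claimed.

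\emph{Main obstacle.} The delicate point is not the formal Morita-theoretic bookkeeping but checking the hypothesis $\Ext^1_{\mathscr E(\mathscr S)}(S_\omega, \widetilde{\mathcal T}^+) = 0$ for the modules $\widetilde X_\omega$ coming from the inductive construction in \cite[\S5C,D]{DPS15} --- i.e.\ confirming that those modules are genuinely ``$\mathscr E(\mathscr S)$-acyclic for $\Hom(-, \widetilde{\mathcal T}^+)$'' in the sense needed by Theorem \ref{newlemma}, and that the filtrations used there (built with the particular sorting function $f$) can be replaced by ones compatible with an arbitrary $\htt$ without disturbing this vanishing. This requires unwinding the recursive definition in \cite{DPS15} and matching it against the exact-category framework of \cite[\S3]{DPS17a}; once that translation is in place, the rest is routine.
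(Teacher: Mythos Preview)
Your proposal is correct and reaches the same conclusion using the same essential ingredient---the vanishing $\Ext^1_{\widetilde H}(\widetilde S_\omega,\widetilde{\mathcal T}^+)=0$ from \cite[Cor.~4.5(2), Prop.~5.5, \S5C,D]{DPS15}---but the architecture differs from the paper's proof. You form $T^{\dagger\prime}=T^\dagger_{\mathfrak q}\oplus\widetilde{\mathcal T}^+$ directly and invoke Theorem~\ref{newlemma} \emph{twice}, symmetrically, once with $\widetilde{\mathcal T}^+$ playing the role of $X$ and once with $T^\dagger_{\mathfrak q}$ in that role. The paper instead works over $\sZ$ first: it inflates $\mathcal T^+$ (via \cite[Thm.~4.7]{DPS17a}) to an object $X\in\mathscr A(\mathscr S)$ that already satisfies the $\Ext^1_{\mathscr E(\mathscr S)}$-vanishing, sets $T^{\dagger\prime}=T^\dagger\oplus X$, applies Theorem~\ref{newlemma} \emph{once} to get $A^\dagger\sim A^{\dagger\prime}$, and only then localizes. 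The DPS15 vanishing is used not to feed Theorem~\ref{newlemma} but to show the inflation $\mathcal T^+_{\mathfrak q}\hookrightarrow X_{\mathfrak q}$ \emph{splits}, so that $\Hom_{\widetilde H}(\widetilde{\mathcal T}^+,T^{\dagger\prime}_{\mathfrak q})$ is a projective generator; the second Morita equivalence is then obtained by a direct progenerator argument rather than a second appeal to Theorem~\ref{newlemma}.

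Your route is arguably more transparent, and it correctly isolates the DPS15 vanishing as the crux. Two small points to tighten: first, what \cite{DPS15} gives is ordinary $\Ext^1_{\widetilde H}$-vanishing, which is \emph{stronger} than the exact-category $\Ext^1_{\mathscr E(\mathscr S)}$-vanishing you need, so your ``main obstacle'' is in fact already handled once you cite the right results. Second, your assertion that the smallest nonzero filtration term of $\widetilde{\mathcal T}_\omega$ is $S_\omega$ (your property~(1) for the reversed application) is true in the equal-parameter setting via \cite[P4,~P9]{Lus03}, but the paper is more cautious and only asserts that $S_\omega$ is a direct \emph{summand} of that bottom term (see the footnote in the proof); fortunately, a summand already suffices for the inflation $S_\omega\to\widetilde{\mathcal T}^+$ needed in the proof of Theorem~\ref{newlemma}.
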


\begin{proof} It is easily seen from the construction in \cite[\S5C]{DPS15} that each $H$-module $\mathcal T_\omega$
and thus $\mathcal T^+$ belongs to the category ${\mathscr A}(\mathscr S)$ used in \cite[Thm. 4.7]{DPS17a}. According
to {\it op.cit.}, 
there is an inflation ${\mathcal T}^+\to X$ (in the exact category sense), where $X\in{\mathscr A}({\mathscr S})$
satisfies the vanishing 
conditions of Theorem \ref{newlemma}. By construction, the $H$-module
$X$ has a height filtration with sections $X_j/X_{j-1}$ which are direct sums of dual left cell modules $S_\omega$ of the same
height $j$.  Each such $S_\omega$ appears as a direct summand\footnote{In fact, using \cite[P4,P9]{Lus03}, which
certainly holds in the equal parameter setup of \cite{DPS15},  it can
be shown that $S_\omega$ is the full bottom section. The proof of \cite[Thm. 5.6]{DPS15} touches this point, but
without a proof. The fact here that $S_\omega$ a summand follows easily from the Kazhdan-Lusztig cell structure for
the (duals of) the various
parabolic right $H$-modules $x_\lambda H$ (for other $\mathcal T_\omega$ this property is automatic by construction).}
of the lowest term $\mathcal T_{\omega,i}$ in some summand $\mathcal T_\omega$ of $\mathcal T^+$ in the construction \cite[\S 5C,D]{DPS15}
of $\mathcal T^+$. Here
$i$ denotes a value of the height function $\htt$ on the lowest section for with $\mathcal T_{\omega,i}\not= 0$. 

 Let
$T^{\dagger}$ and $T^{\dagger\prime}=T^\dagger\oplus X$, with $X$ as above, be as in Theorem \ref{newlemma}. To prove the theorem it is enough, by
Theorem \ref{newlemma}, to prove Theorem 4.3 with $T^{\dagger\prime}$ replacing $T^\dagger$, 
and its $H$-endomorphism ring $A^{\dagger\prime}$ replacing
$A^\dagger$. 
As in the proof of Theorem \ref{newlemma},
we 
  find that
the module $_{A^{\dagger\prime}}A^{\dagger\prime}$ is filtered by modules $\Hom_{A^{\dagger\prime}}(S_\omega, T^{\dagger\prime})$, $\omega\in\Omega$. The vanishing of $\Ext^1_{{\mathscr E}({\mathscr S)}}(S_\nu, 
T^{\dagger\prime})$, for all $\nu\in\Omega$,  gives, by the remark immediately above Theorem \ref{newlemma},
a surjection
$$\Hom_H(\mathcal T^+,T^{\dagger\prime})\twoheadrightarrow
\Hom_H(S_\omega, T^{\dagger\prime}).$$
However, the exact sequence $0\to \mathcal T^+_{\mathfrak q}\to X_{\mathfrak q}\to (X/\mathcal T^+)_{\mathfrak q}\to 0,$ arising from the inflation above, is split
in mod-$H_{\mathfrak q}$ (note $H_{\mathfrak q}=\widetilde H$),  since
$X/\mathcal T^+\in \mathscr A({\mathscr S})$ and $\Ext^1_{\widetilde H}({\widetilde S_\omega},\widetilde{\mathcal T}^+)=0$
for all $\omega\in\Omega$ by \cite[Cor. 4.5(2), Prop. 5.5, \S 5C,D]{DPS15}. It follows that the
$A^{\dagger\prime}_{\mathfrak q}$-module $\Hom_{\widetilde H}(\wT^+,T^{\dagger\prime}_{\mathfrak q})$ is a projective
generator. Now argue  as in Theorem \ref{newlemma}. \end{proof}

% I think that the $X$ used in $T^{\dagger\prime}$ has the property 
%$$T^{\dagger\prime}_{\mathfrak p}=(T^{\dagger}+X)_{\mathfrak p}\cong T^{\dagger}_{\mathfrak p}\oplus 
%\mathcal T^+_{\mathfrak p}\oplus (X/\mathcal T^+)_{\mathfrak p},$$
%This becomes the same situation as in Lemma 4.1. So the argument above "However..." line seems not necessary??

\begin{rem} It was claimed without proof in \cite{DPS17a} that the algebra $A^\dagger$ constructed there had localizations
$A^\dagger_{\mathfrak q}$
agreeing, up to Morita equivalence, with the corresponding localizations in \cite{DPS15}, denoted ${\mathcal A}^+_{\mathfrak q}$ in our notation here, with $\mathfrak q$ as above. Theorem \ref{Steinberg} provides
the proof of this result.  This is potentially
important for future decomposition number calculations, since it was shown in \cite{DPS15} that
${\mathcal A}_{\mathfrak q}^+$ has a completion with module category equivalent to a Cherednik algebra module category
$\mathcal O$. 

We will give further applications of Theorem \ref{Steinberg} and \cite{DPS17a} in the final two sections.
 \end{rem}

\section{The Hecke algebras at good primes.} 
We begin this section in the setting of Theorem \ref{newlemma}.  In particular, we will use the algebra $A^\dagger=\End_H(T^\dagger)$ as in Theorem \ref{newlemma}. Using \cite[proof of Cor. 1.2.12]{DPS98a},  $A^\dagger$ is projective over $\sZ$. 
  
 By \cite[Thm. 4.9]{DPS17a} (which does not depend on the present discussion), $A^\dagger$ has
 a (strict) stratifying system, $\{\Delta(\omega):=\Hom_H(S_\omega,T^\dagger), P(\omega)\}_{\omega\in\Omega}.$

Also, using Remark \ref{rem4.2}, there are various ideals 
\begin{equation}
J_j:=\Hom_H(T^\dagger/T^\dagger_{N-j}, T^\dagger),\quad {\text{\rm where}}\quad N=\max\{\htt(\lambda)\}_{\lambda\in\Lambda}.\end{equation}
Obviously, we have a sequence $0=J_0\subseteq J_1\subseteq\cdots\subseteq J_{N}=A^\dagger$ of ideals.
%\ref{littlelemma}

\medskip
\begin{lem}\label{littlelemma} For each positive integer $j\leq N$, we have 

(1) the ideal $J_j$ is idempotent;

(2) the quotient $A^\dagger/J_j$ is a $\mathcal Z$-projective module;

(3) each section $J_j/J_{j-1}$ is projective as a left $A^\dagger/J_{j-1}$-module. \end{lem}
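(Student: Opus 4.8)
The plan is to deduce all three parts from the structural results already established, reducing to the stratifying-system machinery of Section 3 together with the height-filtration bookkeeping from Remark \ref{rem4.2}.

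First I would set up the translation between the $J_j$ and the filtration of $T^\dagger$. Recall that $A^\dagger=\End_H(T^\dagger)$ carries the stratifying system $\{\Delta(\omega)=\Hom_H(S_\omega,T^\dagger),\,P(\omega)\}_{\omega\in\Omega}$ from \cite[Thm. 4.9]{DPS17a}, and that $T^\dagger\in\mathscr A(\mathscr S)$ with height filtration $T^\dagger_\bullet$ whose sections are direct sums of dual left cell modules $S_\omega$. Applying $\Hom_H(-,T^\dagger)$ to the exact sequence $0\to T^\dagger_{N-j}\to T^\dagger\to T^\dagger/T^\dagger_{N-j}\to 0$ — which lies in $\mathscr E(\mathscr S)$ — and using the vanishing $\Ext^1_{\mathscr E(\mathscr S)}(S_\omega,T^\dagger)=0$ (property (2) above Theorem \ref{newlemma}), I get a short exact sequence $0\to J_j\to A^\dagger\to \Hom_H(T^\dagger/T^\dagger_{N-j},T^\dagger)\to 0$ of $A^\dagger$-modules, and moreover a compatible filtration of $_{A^\dagger}A^\dagger$ whose sections are direct sums of the $\Delta(\omega)=\Hom_H(S_\omega,T^\dagger)$, organized by height exactly as in Proposition \ref{prop2.2}. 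Concretely, $J_j$ is spanned by those $\Delta$-sections of height $>N-j$.

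For (2): since $T^\dagger/T^\dagger_{N-j}$ is itself filtered by direct sums of the $S_\omega$, which are $\mathcal Z$-projective, it is $\mathcal Z$-projective; and since $T^\dagger$ is $\mathcal Z$-projective (hence $\Hom_H(-,T^\dagger)$ of a $\mathcal Z$-split sequence stays $\mathcal Z$-split), $A^\dagger/J_j\cong\Hom_H(T^\dagger/T^\dagger_{N-j},T^\dagger)$ is $\mathcal Z$-projective. This is the same argument used at the end of the proof of Theorem \ref{Morita 3.10}. For (3): $J_j/J_{j-1}$, under the identification above, is the direct sum over $\omega$ with $\htt(\omega)=N-j+1$ of copies of $\Delta(\omega)$, now regarded as an $A^\dagger/J_{j-1}$-module; since $J_{j-1}$ annihilates these sections (axiom (1) of the stratifying system, as in the proof of Theorem \ref{Morita 3.10}), I must show each $\Delta(\omega)$ is $A^\dagger/J_{j-1}$-projective. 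This follows because $P(\omega)$ maps onto $\Delta(\omega)$ with kernel filtered by $\Delta(\mu)$, $\bar\mu>\bar\omega$ (Definition \ref{strat}(3)), all of which have height $>N-j+1$ and so lie in $J_{j-1}$; hence $P(\omega)/J_{j-1}P(\omega)\cong\Delta(\omega)$, exhibiting $\Delta(\omega)$ as a quotient of the projective $A^\dagger/J_{j-1}$-module $P(\omega)/J_{j-1}P(\omega)$ — and in fact a direct summand, since the relevant $\Ext^1$ over $A^\dagger/J_{j-1}$ into the kernel vanishes by Lemma \ref{firstlemma} (only strictly higher $\mu$ contribute and those are zero modulo $J_{j-1}$). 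Then (1) is automatic: an ideal $J$ with $A/J$ projective over the section-defining quotient and with $J/J'$ and $J'$ built from projective pieces is idempotent — more directly, $J_j$ is idempotent because, after passing to the Morita-equivalent $A^{\dagger\prime}$ picture if desired, $J_j=A^\dagger e A^\dagger$ for a suitable idempotent (as in the passage above Proposition \ref{proposition1}), or simply because $J_j/J_{j-1}$ being $A^\dagger/J_{j-1}$-projective with all sections among $\Delta(\omega)$ forces, inductively, $J_j^2=J_j$ via the standard stratifying ideal criterion of Definition \ref{SSI} together with Proposition \ref{proposition1}(c).

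The main obstacle I anticipate is (3): carefully justifying that $\Delta(\omega)$ is projective — not merely a quotient of a projective — as a module over the truncated algebra $A^\dagger/J_{j-1}$, which requires the $\Ext$-vanishing from Lemma \ref{firstlemma} to kill the higher $\Delta(\mu)$-sections in the kernel of $P(\omega)\twoheadrightarrow\Delta(\omega)$ modulo $J_{j-1}$, and checking that the reordering in Proposition \ref{prop2.2} is compatible with the ideals $J_j$ coming from the $T^\dagger_\bullet$ filtration rather than an abstract height filtration of $P$. Once that compatibility is pinned down, parts (1) and (2) are short. I would present (2) first, then (3), then note (1) as a formal consequence via Definition \ref{SSI} and Proposition \ref{proposition1}(c), and remark that (1)--(3) together say precisely that $0=J_0\subseteq\cdots\subseteq J_N=A^\dagger$ is a defining sequence exhibiting $A^\dagger$ as a standardly stratified $\mathcal Z$-algebra.
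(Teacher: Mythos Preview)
Your overall strategy matches the paper's, but there is a notational slip in (2), some loose reasoning in (3), and a genuine gap in (1).

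In (2), applying the contravariant functor $\Hom_H(-,T^\dagger)$ to $0\to T^\dagger_{N-j}\to T^\dagger\to T^\dagger/T^\dagger_{N-j}\to 0$ gives $0\to J_j\to A^\dagger\to \Hom_H(T^\dagger_{N-j},T^\dagger)\to 0$; you wrote the third term as $\Hom_H(T^\dagger/T^\dagger_{N-j},T^\dagger)$, which is $J_j$ itself. The conclusion survives since $T^\dagger_{N-j}$ is also $\mathcal Z$-projective, but correct this. The paper argues (2) even more simply: $A^\dagger/J_j$ is filtered by the $\mathcal Z$-projective modules $\Delta(\omega)$.

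In (3), you reach the right isomorphism $P(\omega)/J_{j-1}P(\omega)\cong\Delta(\omega)$, but the justification is imprecise. The sections $\Delta(\mu)$ of $K_\omega:=\ker\bigl(P(\omega)\twoheadrightarrow\Delta(\omega)\bigr)$ do not literally ``lie in $J_{j-1}$''. The paper's argument has two steps: first $J_{j-1}P(\omega)\subseteq K_\omega$ because $J_{j-1}$ annihilates $\Delta(\omega)$ (a summand of $J_j/J_{j-1}$); second $K_\omega/J_{j-1}P(\omega)=0$ because it is an $A^\dagger/J_{j-1}$-module and, for each section $\Delta(\mu)$ of $K_\omega$, $\Hom_{A^\dagger}(\Delta(\mu),V)=0$ for every $A^\dagger/J_{j-1}$-module $V$ (each such $\Delta(\mu)$ is a quotient of $P(\mu)$ with $\htt(\mu)\geq N-j+2$, and axiom (1) of Definition~\ref{strat} gives $\Hom_{A^\dagger}(P(\mu),V)=0$). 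Once the isomorphism holds, your subsequent ``direct summand via $\Ext^1$-vanishing'' sentence is redundant: $\Delta(\omega)$ \emph{is} the projective $A^\dagger/J_{j-1}$-module $P(\omega)/J_{j-1}P(\omega)$.

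The real gap is (1). None of your three sketches works. Left-projectivity of $J$ together with $k$-projectivity of $A/J$ does not force $J^2=J$: for the path algebra $A$ of $1\to 2$ over a field, $J=\rad A$ satisfies ${}_AJ\cong Ae_2$ (projective) and $A/J$ semisimple, yet $J^2=0$. Proposition~\ref{proposition1}(c) \emph{assumes} $J^2=J$; Definition~\ref{SSI} merely lists it as an axiom. And producing an idempotent generator in a Morita-equivalent algebra does not show your specific $J_j\subseteq A^\dagger$ is idempotent. The paper's argument uses the very Hom-vanishing you invoke elsewhere: $J_j/J_j^2$ is an $A^\dagger/J_j$-module, while $J_j$ is filtered by $\Delta(\omega)$ with $\htt(\omega)\geq N-j+1$, each a quotient of $P(\omega)$ with $\Hom_{A^\dagger}(P(\omega),V)=0$ for every $A^\dagger/J_j$-module $V$. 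Hence $\Hom_{A^\dagger}(J_j,J_j/J_j^2)=0$, so the quotient map $J_j\twoheadrightarrow J_j/J_j^2$ is zero and $J_j^2=J_j$. Prove (1) this way first; the same vanishing (``Claim~1'' in the paper) then feeds directly into (3).
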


\begin{proof} By the proof of Theorem \ref{newlemma},  each  $J_j/J_{j-1}$ is isomorphic to $\Hom_H(T^\dagger_{N+1-{j}}/T^\dagger_{N-j}, T^\dagger)$ and thus to a direct sum of various
$\Hom_H(S_\omega,T^\dagger)$, with $\htt(\omega)=N+1-j$. We will use this throughout the proof.

\smallskip\noindent
{\bf Claim 1:} For each $j$ and $A^\dagger/J_{j}$-module $V$, we have $\Hom_{A^\dagger}(P(\omega),V)=0$
whenever $\htt(\omega)\geq N-j$.

\smallskip\noindent
{\it Proof of Claim 1:} This fact follows from the filtrations just mentioned and the axioms for a stratifying system when
$V$ is the left regular $A/J_{j}$-module. However, it reduces to this case through the projectivity of $P(\omega)$.
This proves Claim 1.
 
 \vspace{.3cm}
 
 We first prove (1). 
		As a consequence of Claim 1 and the filtrations above,  $\Hom_{A^\dagger}(J_j,V)=0$ whenever $V$ is an $ A^\dagger/J_j$-mod and
$\htt(\omega)\geq N-j$. This applies, in particular, to $V=J_j/J_j^2$, forcing the latter to be 0, i.e., $J_j^2=J_j$ and $J_j$ is idempotent. This
proves (1). 
%We do not see your argument.

 Next, (2) follows from the fact that $A^\dagger/J_j$ is filtered by various $\Delta(\omega)$, which are
$\mathcal Z$-projective.

 Finally, we prove (3). We prove $J_j/J_{j-1}$ is an $A^\dagger/J_{j-1}$-projective module. We know $J_j/J_{j-1}$ is filtered
 by $\Delta(\omega)=\Hom_H(S_\omega,T^\dagger)$, with $\htt(\omega)=N+1-i$. For each $\omega$ there is projective
 $A^\dagger$-module $P(\omega)$ with a $\Delta$-filtration. Also, $P(\omega)$ maps onto $\Delta(\omega)$. The other sections $\Delta(\tau)$ satisfy
  $\htt(\tau)>\htt(\omega)$. 
   
 \medskip\noindent
{\it Claim 2:} Suppose $\omega\in\Lambda$ has height $N-j$. Then
\begin{equation}\label{claim}P(\omega)/J_{j-1}P(\omega)\cong \Delta(\omega).\end{equation}

\medskip\noindent{\it Proof of Claim 2.}
 Equivalently, $J_{j-1}P(\omega)=K_\omega$, the kernel of the map $P(\omega)\twoheadrightarrow\Delta(\omega)$. Clearly, 
 $J_{j-1}P(\omega)\subseteq K_\omega$. 
 Also,  $P(\omega)/J_{j-1}P(\omega)$ is an $A^\dagger/J_{j-1}$-module, as is $K_\omega/J_{j-1}P(\omega)$. 
 However, Claim 1 implies that $\Hom_{A^\dagger}(\Delta(\omega), V)=0$ if $\htt(\omega)\geq N-j$ and $V$ is an
 $A^\dagger/J_j$-module. Since $K_\omega$ is filtered by such $\Delta$'s, it follows taking $V=K_\omega /J_{j-1}P_\omega=0$. This proves Claim 2.
 
 \medskip
 Next, note that $P(\omega)/J_{j-1}P(\omega)$ is a projective $A^\dagger/J_{j-1}$-module. We know that all
 direct summands of $J_j/J_{j-1}$ is a direct sum of projective $A/J_{j-1}$-modules (the various $\Delta(\omega)$
 with $\htt(\omega)=N+1-j$. Thus, $J_j/J_{j-1}$ is projective as an $A/J_{j-1}$-module. This completes the proof of (3) and thus the lemma. \end{proof}

We now define a specific multiplicative monoid $\mathbb S\subseteq \mathcal Z= {\mathbb Z}[t,t^{-1}]$ that will be
used in Theorem \ref{QHA}.  Namely, assume that $\mathbb S$ is generated by all the bad primes for the Weyl group $W$ together with the cyclotomic polynomial $\Phi_4(t)=t^2+1$. Let
$\mathcal Z^\natural=\mathbb S^{-1}\sZ$, the localization of $\sZ$ and $\mathbb S$. Put $H^\natural:=\mathbb S^{-1}H=\sZ^\natural\otimes_\sZ H$.  We consider the
right $H $-module $T^\dagger$ defined in \cite{DPS17a}, and put $T^{\dagger\natural}= {\mathbb S}^{-1}T^\dagger$.  Similarly, put
$A^{\dagger\natural}={\mathbb S}^{-1}A^\dagger\cong \End_{H^\natural}(T^{\dagger\natural})$.

Also, in Theorem \ref{QHA}, we presently require the equal parameter assumption of \cite{DPS15}. That is, we assume each
$c_s=1$ in (\ref{relations}). (The authors expect this  requirement will
be removed in a subsequent paper, and $t^2+1$ will be removed from $\mathbb S$. See Remark \ref{sub}.) In the proof, we shall also use Theorem \ref{Steinberg}; it serves here as a bridge between the results of \cite{DPS17a}, quoted above, and \cite{DPS15}. The module $\wT^+$ in Theorem 4.3 may be viewed as the base change to $\sQ$ of a module
$\mathcal T^{+\natural}$ for $H^\natural$. See \cite[\S5C]{DPS15} which even starts with modules for $H$. A similar remark
applies to $\widetilde{\mathcal A}^+$.

\begin{thm}\label{QHA} With the equal parameter assumption, $A^{\dagger\natural}$ is a split quasi-hereditary algebra over $\mathcal Z^\natural$. It has a split defining sequence obtained from that in Lemma \ref{littlelemma} by base-change to $\mathcal Z^\natural$.\end{thm}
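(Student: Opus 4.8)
The plan is to reduce the statement to the local criterion furnished by Corollary~\ref{OldCPSTheorem}(d) (or, more precisely, by the split-type portion of Theorem~\ref{BigTheorem}(d)), applied to the algebra $A^{\dagger\natural}$ over the base ring $\sZ^\natural=\mathbb S^{-1}\sZ$ with the sequence of ideals $0=J_0^\natural\subseteq J_1^\natural\subseteq\cdots\subseteq J_N^\natural=A^{\dagger\natural}$ obtained by base change from Lemma~\ref{littlelemma}. First I would record the global hypotheses needed for that criterion: $\sZ^\natural$ is a localization of $\sZ=\mathbb Z[t,t^{-1}]$, hence a regular domain of Krull dimension $\leq 2$ (the dimension of $\mathbb Z[t,t^{-1}]$ is $2$, and localization does not increase it); $A^{\dagger\natural}$ is projective over $\sZ^\natural$ since $A^\dagger$ is projective over $\sZ$ by \cite[proof of Cor.~1.2.12]{DPS98a}; and each $A^{\dagger\natural}/J_j^\natural$ is $\sZ^\natural$-projective by Lemma~\ref{littlelemma}(2) together with flat base change. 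The ideals $J_j^\natural$ are idempotent by Lemma~\ref{littlelemma}(1), again preserved under base change. So the only thing left is the local condition: for every prime $\mathfrak p\in\Spec\,\sZ^\natural$ of height $\leq 1$, the ideal $\overline{J_j(\mathfrak p)}/\overline{J_{j-1}(\mathfrak p)}$ must be a heredity ideal of split type in $A^{\dagger\natural}(\mathfrak p)/\overline{J_{j-1}(\mathfrak p)}$.

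Next I would unwind which primes $\mathfrak p$ actually occur. Since $\mathbb S$ is generated by the bad primes of $W$ and by $\Phi_4(t)=t^2+1$, a height~$\leq 1$ prime of $\sZ^\natural$ is either the zero ideal, giving residue field $K=\mathbb Q(t)$, or a height~one prime $\mathfrak p$ of $\sZ$ disjoint from $\mathbb S$ — so either $\mathfrak p=(p)$ for a good prime $p$, or $\mathfrak p=(\Phi_{2e}(t))$ for a cyclotomic polynomial with $e\neq 2$ (the excluded value $e=2$ is exactly $\Phi_4$), or a height one prime of $\mathbb F_p[t,t^{-1}]$-type after reduction, or a ``mixed'' height one prime. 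The decisive cases, however, are the cyclotomic ones $\mathfrak p=(\Phi_{2e}(t))$, $e\neq 2$: this is precisely the setting in which Theorem~\ref{Steinberg} applies, identifying $A^\dagger_{\mathfrak p}$ up to Morita equivalence with $\widetilde{\mathcal A}^+=\mathcal A^+_{\mathfrak p}$, the algebra of \cite{DPS15}, with height filtrations corresponding. For such $\mathfrak p$, \cite[Thm.~5.6]{DPS15} shows $\widetilde{\mathcal A}^+$ is split quasi-hereditary over $\sZ_{\mathfrak p}$ (a DVR), with a split defining sequence whose ideals match, under the Morita equivalence of Theorem~\ref{Steinberg} and under Remark~\ref{rem4.2}, the base-changed $J_j$'s; hence $\overline{J_j(\mathfrak p)}$ defines a split quasi-hereditary structure on $A^{\dagger\natural}(\mathfrak p)$, which in particular says each section is a heredity ideal of split type. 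For the generic point $\mathfrak p=(0)$, $H_K$ is split semisimple (the Hecke algebra over $\mathbb Q(t)$, all $c_s=1$, types other than $^2F_4$), so $A^\dagger_K=\End_{H_K}(T^\dagger_K)$ is a product of matrix algebras over $K$ and the heredity sections are automatically of split type. For the remaining positive-characteristic height one primes $\mathfrak p=(p)$ with $p$ good, I would invoke the stratifying-system structure on $A^\dagger$ from \cite[Thm.~4.9]{DPS17a}, which base changes to $A^{\dagger\natural}(\mathfrak p)$, combined with the fact that over a (good-prime) field the sections $\Delta(\omega)(\mathfrak p)=\Hom_H(S_\omega,T^\dagger)(\mathfrak p)$ become the irreducible-indexed standard modules of a highest weight category: here one uses that at good primes the left-cell/two-sided-cell combinatorics of \cite{Lus03} (properties P1--P15) hold, so that the quasi-poset $\bar\Omega$ collapses to an honest poset indexing $\Irr A^\dagger(\mathfrak p)$ and the $E(\mathfrak p)$-endomorphism algebras of the heredity sections are products of matrix algebras over $k(\mathfrak p)$, i.e.\ of split type. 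The same argument handles any residual height one primes not already covered.

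Having verified the local hypotheses at all height~$\leq 1$ primes, I would then simply cite Corollary~\ref{OldCPSTheorem}(d): it gives that $A^{\dagger\natural}$ is quasi-hereditary of split type over $\sZ^\natural$ with split defining sequence \eqref{sequence}, which here is exactly the base change to $\sZ^\natural$ of the sequence in Lemma~\ref{littlelemma}. This yields both assertions of the theorem at once.

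The main obstacle I anticipate is the good-prime case $\mathfrak p=(p)$: unlike the cyclotomic primes, it is not directly covered by the \cite{DPS15} construction, so one must argue separately that the stratifying system of \cite[Thm.~4.9]{DPS17a} actually refines to a \emph{split quasi-hereditary} structure after reduction mod a good prime — that is, that the quasi-poset becomes a genuine poset (no nontrivial $\sim$-classes survive) and that each $\Hom_H(S_\omega,T^\dagger)(\mathfrak p)$ is an indecomposable standard module whose endomorphism ring is $k(\mathfrak p)$ itself. This is where Lusztig's cell-theoretic positivity and the good-prime hypothesis on $W$ do the real work; everything else (projectivity, idempotency, the reduction to Corollary~\ref{OldCPSTheorem}(d), flat base change) is routine bookkeeping. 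A secondary subtlety is making sure the defining sequence produced by Corollary~\ref{OldCPSTheorem}(d) is literally the base change of the one in Lemma~\ref{littlelemma} rather than merely ``a'' split defining sequence — but this is immediate since that corollary is a statement \emph{about} the given sequence \eqref{sequence}, not an existence statement.
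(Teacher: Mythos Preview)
Your overall framework is exactly the paper's: verify the hypotheses of Corollary~\ref{OldCPSTheorem}(d) over $\sZ^\natural$ using the base-changed sequence from Lemma~\ref{littlelemma}, and check the split quasi-hereditary condition at every prime $\mathfrak p$ of height $\leq 1$. Your treatment of the cyclotomic primes $\mathfrak p=(\Phi_{2e}(t))$, $e\neq 2$, via Theorem~\ref{Steinberg} and \cite{DPS15} is also the paper's argument (the paper cites \cite[Thm.~6.4]{DPS15} rather than Thm.~5.6), as is the generic point.

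The gap is in the remaining height-one primes, and it is the point you yourself flag as the ``main obstacle.'' First, your enumeration of height-one primes is off: there are no ``mixed'' height-one primes in $\sZ$ (those would have height~2), and besides $(p)$ and the cyclotomic $(\Phi_{2e}(t))$ there are all the other principal primes $(f(t))$ with $f$ a primitive irreducible non-cyclotomic polynomial, which you do not address except by a hand-wave. Second, and more seriously, your proposed argument for $\mathfrak p=(p)$ with $p$ good --- invoking P1--P15 to collapse the quasi-poset $\bar\Omega$ to a poset indexing $\mathrm{Irr}\,A^{\dagger}(\mathfrak p)$ --- does not work as stated: P1--P15 are properties of $H$ over $\sZ$, not of a specialization, and in any case left cells in the same two-sided cell need not give pairwise non-isomorphic or irreducible standard modules. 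The paper avoids all of this with a much simpler observation: at every such non-cyclotomic height-one prime (whether $(p)$ for $p$ good, or $(f(t))$ with $f$ not dividing any $\Phi_e(t^2)$), the algebra $A^{\dagger\natural}(\mathfrak p)$ is actually \emph{split semisimple}, by \cite[Thm.~4.2.2]{DPS98a} (the bad primes, now inverted, are exactly those appearing in the denominators of generic degrees). Split semisimple algebras are trivially split quasi-hereditary for any filtration by idempotent ideals with projective quotients, so the local condition is automatic there. The only residual polynomial primes are $(\pm\Phi_{2e}(\pm t))$, handled by the factorization $\Phi_e(t^2)=\Phi_{2e}(t)$ or $\pm\Phi_{2e}(t)\Phi_{2e}(-t)$ together with the automorphism $t\mapsto -t$ of $\sZ$, reducing to the cyclotomic case already treated. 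Replacing your cell-combinatorics paragraph with this semisimplicity argument closes the gap and gives exactly the paper's proof.
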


\begin{proof}
Here we work over $\mathcal Z^\natural$ rather than the DVR $\sQ$ as 
in \cite{DPS15}. In fact, $\sQ$ is a localization of $\mathcal  Z^\natural$ at the ideal $\mathfrak q=(\Phi_{2e})$, $e\not=2$.  
  By \cite[Thm. 6.4]{DPS15},  $\mathcal A^{ +}_{\mathfrak q}$ ($=\widetilde{\mathcal A}^+$ in \cite{DPS15}) is quasi-hereditary, and thus ${\mathcal A}^{+\natural}(\mathfrak q)$ is quasi-hereditary. The proof there shows that 
  ${\mathcal A}^{+}_{\mathfrak q}$ is split quasi-hereditary, with the height function (used in Lemma \ref{littlelemma}) giving
  the required split defining sequence.  These properties pass to $A^{\dagger\natural}_{\mathfrak q}=A^\dagger_{\mathfrak q}$ by Theorem \ref{Steinberg}.
   Also, \cite[Thm. 4.2.2]{DPS98a} implies that $A^{\dagger\natural}(\mathfrak p)$ is split
  semisimple for all choices of height one primes $\mathfrak p= (p)$ with $p$  a good prime integer, or for $\mathfrak p=0$.  (Here we use the fact that $\mathbb S$ contains all the bad
  primes of $W$, appearing in the denominator of the generic degrees.) 
 
  The argument in \cite[op. cit.]{DPS98a} actually works in characteristic
 $0$, and shows that $A^{\dagger\natural}(\mathfrak p)$ is split semisimple, if $\mathfrak p= (f(t))$ for $f(t)\in\mathbb Z[t]$ a (nonscalar) primitive irreducible polynomial, and if $f(t)$ does not divide any product of cyclotomic polynomials $\Phi_e(t^2)$,
   $e\in\mathbb N^+$. We note that 
   \begin{equation}\label{cycl}
   \Phi_e(t^2)=\begin{cases}\Phi_{2e}(t), \quad e\,\,{\text{\rm even}}\\ \pm\Phi_{2e}(t)\Phi_{2e}(-t), \quad e\,\,
   {\text{\rm odd}}.\end{cases}\end{equation}
   by an elementary argument. Thus, if $f(t)$ divides any such product, it must be either $\Phi_{2e}(t)$
   or $\pm\Phi_{2e}(-t)$. The latter polynomial is conjugate, of course, to $\Phi_{2e}(t)$ by an automorphism of
   $\mathcal Z$. The associated prime ideals $\mathfrak p$ always give, if $e\not=2$,  split quasi-hereditary algebras  
   $A^{\dagger\natural}(\mathfrak p)$  with respect to the defining sequence defined using the height function, as discussed above. Now our assertion follows from Corollary \ref{OldCPSTheorem}.
     \end{proof}
 
 \begin{rem}\label{sub}  We expect to show in \cite{DPS17b} that Theorem \ref{QHA} holds when the multiplicative monoid $\mathbb S$
 is replaced by the smaller multiplicative monoid  $\mathbb S'$ generated by the bad primes for $W$. (In other words, the
 cyclotomic polynomial $t^2+1$ can be omitted. Also, in this more recent setting, we should not require equal parameters, but instead can use the
 setting of Theorem \ref{newlemma} in the current paper.)
  \end{rem}
 
In the proof of the following result, we fix a height function $\htt:\Lambda\to\mathbb N$ which is compatible with
the quasi-poset structure $\leq_{LR}^\op$ on $\Lambda$. We assume the stronger version of Theorem \ref{QHA} as 
described in Remark \ref{sub} above, and allow its relaxed hypothesis: thus, $\mathcal Z^\natural$ is obtained from
$\mathcal Z$ by inverting the bad primes of $W$, while $t^2+1$ plays no special role. For a version of Theorem \ref{Theorem4.2} proved using only Theorem \ref{QHA} as proved in the present paper, see Remark \ref{5rem} (a).

\begin{thm}\label{Theorem4.2} There is a family $\{E(\lambda)\}_{\lambda\in\Lambda}$ of $H^\natural$-modules
with the following properties: 

(1) Each $E(\lambda)$ is projective
as a $\nsZ$-module, and $E(\lambda)_K$ is the irreducible
$H_K$-module indexed by $\lambda$.  

(2) Let $\mathfrak m$ be any fixed 
maximal ideal of $\nsZ$. Let $D$ be an irreducible $H^\natural(\mathfrak
m)$-module. There is a unique $\lambda=\lambda(D)\in\Lambda$ such that
$[E(\lambda)(\mathfrak m): D]\not=0$ and such that $[E(\mu)(\mathfrak m):D]=0$ for any $\mu\in\Lambda$
with $\htt(\mu)\leq\htt(\lambda)$ and $\mu\not=\lambda$. 

(3)  Let $\mathfrak m$, $E(\lambda)$, and $\lambda=\lambda(D)$ be as in (2). Then $D$ is in the
head of $E(\lambda)(\mathfrak m)$.

(4) Using the notation of (3), we have $[E(\lambda)(\mathfrak m):D]=1$.
\end{thm}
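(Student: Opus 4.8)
The plan is to deduce Theorem \ref{Theorem4.2} from the split quasi-hereditary structure on $A^{\dagger\natural}$ established in Theorem \ref{QHA}, using the Morita equivalence between $A^{\dagger\natural}$-mod and the category of $H^\natural$-modules filtered by dual left cell modules. First I would set $E(\lambda)$ to be the $H^\natural$-module corresponding, under the Morita equivalence of Theorem \ref{newlemma}/Theorem \ref{Steinberg}, to the standard module $\Delta^\natural(\lambda) = \Hom_{H^\natural}(S_\lambda, T^{\dagger\natural})$ of the quasi-hereditary algebra $A^{\dagger\natural}$ — more precisely, $E(\lambda)$ should be the ``dual'' standard $H^\natural$-module $S_\lambda$-type object whose $K$-form is the irreducible $H_K$-module $E_K(\lambda)$; this gives property (1), since each $\Delta^\natural(\lambda)$ is $\nsZ$-projective and the generic specialization is semisimple with the correct parametrization (using that $\mathbb S$ contains the bad primes, as in the proof of Theorem \ref{QHA}).

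The heart of the argument is a base-change-to-$\mathfrak m$ analysis of the quasi-hereditary structure. Fix a maximal ideal $\mathfrak m$. By Corollary \ref{OldCPSTheorem}, $A^{\dagger\natural}(\mathfrak m)$ is split quasi-hereditary over the field $k(\mathfrak m)$ with the base-changed defining sequence, hence a highest weight category in the classical sense, with standard modules $\Delta(\lambda)(\mathfrak m)$ and irreducibles $L(\lambda)$ indexed by the same poset $\bar\Lambda$ and ordered compatibly with $\htt$. Standard highest weight category theory then gives: each irreducible $D$ of $A^{\dagger\natural}(\mathfrak m)$ is $L(\lambda)$ for a unique $\lambda$; $L(\lambda)$ is the head of $\Delta(\lambda)(\mathfrak m)$ with multiplicity one; and $[\Delta(\mu)(\mathfrak m):L(\lambda)]\neq 0$ forces $\mu \geq \lambda$ in the highest weight order, so in particular $\htt(\mu)\leq\htt(\lambda)$ with $\mu\neq\lambda$ is incompatible with nonvanishing multiplicity (using that $\htt$ is compatible with the quasi-poset and constant on equivalence classes, so $\htt(\mu)=\htt(\lambda)$ with $\mu\neq\lambda$ would need $\bar\mu = \bar\lambda$, contradicting the multiplicity-one-at-the-top structure unless $\mu=\lambda$). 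Transporting these three facts through the Morita equivalence (which is exact and preserves composition multiplicities, heads, and the identification of $E(\lambda)(\mathfrak m)$ with $\Delta(\lambda)(\mathfrak m)$-up-to-Morita) yields (2), (3), and (4) simultaneously: define $\lambda(D)$ to be the weight with $D \cong L(\lambda(D))$; then $[E(\lambda)(\mathfrak m):D]\neq 0$, the minimality/uniqueness statement in (2) follows from the triangularity of decomposition matrices in a highest weight category, $D$ sits in the head by the multiplicity-one head property, and $[E(\lambda)(\mathfrak m):D]=1$ is exactly that same multiplicity-one statement at the top of $\Delta(\lambda)(\mathfrak m)$.

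The main obstacle I anticipate is bookkeeping around the quasi-poset versus poset distinction and making sure the height function $\htt$ really does enforce the triangularity in the sharp form demanded by (2) — namely that among all $\mu$ with $\htt(\mu)\leq\htt(\lambda)$, only $\lambda$ itself contributes $D$. This requires knowing not just that the defining sequence is compatible with $\htt$, but that within a single $\htt$-level the standard modules $\Delta(\mu)$ for distinct $\mu$ (necessarily in distinct equivalence classes $\bar\mu$, since $\htt$ is constant on classes but the $J_j/J_{j-1}$ decompose into the individual $\Delta(\mu)$'s) have disjoint heads — this is built into the stratifying-system axioms (Definition \ref{strat}(1),(2)) and the split quasi-hereditary structure, but needs to be spelled out. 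A secondary point to check carefully is that the Morita equivalence of Theorem \ref{Steinberg} and the $K$-form computation genuinely identify $E(\lambda)_K$ with the irreducible $H_K$-module labelled $\lambda$ in the sense of the paper's indexing of $\Lambda$ via left cells; this is essentially \cite[Thm. 4.2.2]{DPS98a} together with the compatibility of the cell-theoretic labelling, quoted already in the proof of Theorem \ref{QHA}, so it can be invoked rather than reproved.

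Finally I would remark (perhaps deferring to Remark \ref{5rem}) that property (4), multiplicity one, together with (3) is what makes the resulting decomposition matrix genuinely triangular with ones on the diagonal, recovering the Geck--Jacon-type statement; and that everything said for maximal ideals $\mathfrak m$ of $\nsZ$ also applies verbatim after the further localization to a height one prime, by the same highest weight category reasoning applied to $A^{\dagger\natural}_{\mathfrak p}$, which is where the local-global machinery of \S2 connects the present statement to the height one results used elsewhere in the paper.
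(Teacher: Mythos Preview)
Your overall strategy---prove an analogue of the theorem for $A^{\dagger\natural}$ using its split quasi-hereditary structure, then transfer to $H^\natural$---matches the paper's. But the transfer mechanism you invoke does not exist: there is no Morita equivalence between $A^{\dagger\natural}$ and $H^\natural$. Theorems \ref{newlemma} and \ref{Steinberg} give Morita equivalences among various endomorphism algebras of objects in $\mathscr{A}(\mathscr{S})$; none of them relates $A^{\dagger\natural}$ to $H^\natural$. The actual link is that $H_H$ is a direct summand of $T^\dagger$ (\cite[Rem.~4.10]{DPS17a}), so $H^\natural = eA^{\dagger\natural}e$ for the idempotent projection $e$ onto that summand. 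The functor $N\mapsto eN$ is only a Schur-type functor: it is exact, sends irreducibles to irreducibles \emph{or zero}, and satisfies merely $e\,\mathrm{head}(V)\subseteq\mathrm{head}(eV)$. These weaker properties are just enough to push (2), (3), (4) down to $H^\natural$, but you must argue with them rather than with a nonexistent equivalence. In particular, the possibility $eL=0$ for some irreducible $A^{\dagger\natural}(\mathfrak m)$-modules $L$ means the $H^\natural(\mathfrak m)$-irreducibles correspond only to a subset of the $A^{\dagger\natural}(\mathfrak m)$-irreducibles.

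A second, related gap: you write $\Delta^\natural(\lambda)=\Hom_{H^\natural}(S_\lambda,T^{\dagger\natural})$, but the stratifying system has standard modules $\Delta(\omega)$ indexed by left cells $\omega\in\Omega$, not by $\lambda\in\Lambda$, and the map $\Omega\to\Lambda$ is many-to-one. The paper produces the $E(\lambda)$'s differently: it decomposes each section $J_j/J_{j-1}$ using centrally primitive idempotents $e_\lambda$ in the split endomorphism algebra $\End_{A^{\dagger\natural}}(J_j/J_{j-1})\cong\prod M_{n_\lambda}(\mathcal Z^\natural)$ (the matrix form requires a Swan--Serre freeness argument over $\mathcal Z^\natural$), then picks out a rank-one summand $E(\lambda)=f_\lambda e_\lambda(J_j/J_{j-1})$ via a diagonal primitive idempotent $f_\lambda$. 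This makes each $E(\lambda)$ a \emph{projective} $A^{\dagger\natural}/J_{j-1}$-module with $\Hom(E(\lambda),E(\mu))=0$ for $\lambda\neq\mu$ in the same height level, from which (2)--(4) follow directly without appealing to abstract highest weight category facts. Your sketch of (2) also has a slip: $\htt(\mu)=\htt(\lambda)$ with $\mu\neq\lambda$ does not force $\bar\mu=\bar\lambda$; the vanishing at equal height comes from the orthogonality $\Hom(E(\lambda),E(\mu))=0$, not from the quasi-poset structure.
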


\begin{proof} The notation $\Lambda$ was introduced in \S4 as a set indexing the irreducible left (or right) $H_K$-modules. The structure of $A^\dagger_K=A^{\dagger\natural}_K$ as endomorphism algebra shows that $\Lambda$ also indexes
in a corresponding way the irreducible left $A^\dagger_K=A^{\dagger\natural}_K$-modules. It is observed in \cite[Rem. 4.10]{DPS17a} that
the right regular module $H_H$ has a split embedding into $T^\dagger$. Using the idempotent projection
$e:T^\dagger\to H_H\subseteq T^\dagger$, we may identify 
$$H=eA^\dagger e\quad{\text{\rm and}}\,\, H^\natural=eA^{\dagger\natural}e.$$
Using either identification, multiplication by $e$ sends an irreducible $A^\dagger_K=A^{\dagger\natural}_K$-module
to an irreducible $H_K=H^\natural_K$-module. We may take multiplication by $e$ as defining the indexing correspondence
for $\Lambda$.

Multiplication by $e$ sends irreducible $A^{\dagger\natural}_k$-modules to irreducible or zero $H^\natural_k$-modules, whenever
$k$ is any field with a ring homomorphisms $\mathcal Z^\natural\to k$. Also, $e\,{\text{\rm head}}(V)\subseteq
{\text{\rm head}}(eV)$ for any left $A^{\dagger\natural}$-module $V$. 

Next, notice that Theorem \ref{Theorem4.2} makes sense if $H^\natural$ and $H_K$ are replaced in its wording by
$A^{\dagger\natural}$ and $A^{\dagger\natural}_K$, respectively. Moreover, this new version implies the original 
version using the above remarks, as the reader may check. We now prove the new version.

Recall that Theorem \ref{QHA} gives a defining sequence $0=J_0\subseteq J_1\subseteq\cdots
\subseteq J_n$ for $A^{\dagger\natural}$ associated to
the height function; see Lemma \ref{littlelemma}. The splitting property gives that each $\End_j:=\End_{A^{\dagger\natural}}(J_j/J_{j-1})$ is a direct product  of split Azumaya algebras. The latter are all  $\mathcal Z^\natural$-endomorphism algebras of projective $\mathcal Z^\natural$-modules. By \cite[p. 111, first paragraph]{Sw78}, the latter projective modules are free.
(We use here that $\mathcal Z^\natural$ is obtained from $\sZ=\mathbb Z[t,t^{-1}] $ by inverting some rational primes in $\mathbb Z$. This uses a Swan theorem version of Serre's conjecture. We remark that another way to insure projective modules over the base ring are free is to just pass to a localization at a maximal ideal, as in Remark \ref{5rem}(a).)

Thus, the direct product decomposition of $\End_j$ may be regarded as a direct sum of ideals associated to various
centrally primitive idempotents in $\End_j$. These idempotents may be obviously indexed as $e_\lambda$, $\lambda\in\Lambda_j$,
where $\Lambda_j$ corresponds to the irreducible left $(A^{\dagger\natural})_K$-modules in $(J_j/J_{j-1})_K$. Thus,
$J_j/J_{j-1}=\bigoplus_{\lambda\in \Lambda_j}e_\lambda(J_i/J_{i-1})$. The endomorphism algebra $\End_{A^{\dagger\natural}}(e_\lambda J_j/J_{j-1})$ is a full matrix algebra $M_{n_\lambda}(\mathcal Z^\natural)$ and the
$A^{\dagger\natural}_K$-module $(e_\lambda J_j/J_{j-1})_K$ is a direct sum $E_K(\lambda)^{\oplus _{n_\lambda}}$,
where $E_K(\lambda)$ is the irreducible $A^{\dagger\natural}_K$-module indexed by $\lambda$. 

Now fix any diagonal primitive idempotent $f_\lambda\in M_{n_\lambda}(\mathcal Z^\natural)$, and set
$$E(\lambda):= f_\lambda e_\lambda J_j/J_{j-1}.$$
Then $E(\lambda)$ is a projective $A^{\dagger\natural}/J_{j-1}$-module, and $E(\lambda)_K\cong E_K(\lambda)$.

We now check the properties in the new version of Theorem \ref{Theorem4.2}. Property (1) is obvious. Fix an irreducible
$A^{\dagger\natural}$-module $D$. Let $j$ be the smallest value with $[(J_j/J_{j-1}(\mathfrak m):D]\not=0$. 
Thus, $D$ is killed by $J_{j-1}$, so is an irreducible $A/J_{j-1}$-module. As such it appears in the head of
$A/J_{j-1}(\mathfrak m)$. Since it does not appear $(A/J_j)(\mathfrak m)$, it must appear in the head of 
$(J_j/J_{j-1})(\mathfrak m)$. In particular, it must appear in the head of $E(\lambda)(\mathfrak m)$ for some
$\lambda\in\Lambda_j$. Such a $\lambda$ is unique, since $E(\lambda)$ is a projective $A^{\dagger\natural}/J_{j-1}$-module, and $\Hom_{A^{\dagger\natural}}(E(\lambda),E(\mu))=0$ for $\lambda\not=\mu$ in $\Lambda_j$. This
proves (2) and (3).  Also, $\Hom_{A^{\dagger\natural}}(E(\lambda),E(\lambda))$ has rank one over $\mathcal Z^\natural$.
This proves (4) and the theorem is proved.
\end{proof}

\begin{rems} \label{5rem} (a) A local version of the theorem, using $\mathfrak m$ from the start, can be proved using the setting
of Theorem \ref{QHA} in this paper. Here one must assume that $t^2+1$ does not lie in $\mathfrak m$, and $\mathcal Z^\natural$ and $H^\natural$ should be replaced by ${\mathcal Z}^{\natural}_{\mathfrak m}$ and $H^\natural_{\mathfrak m}$,
respectively. There is only one choice, in this formulation for $\mathfrak m$ in (2), (3), (4). Still, it is
interesting that a result for the local case of $H^\natural_{\mathfrak m}$ can be deduced from properties of the various
$H^\natural_{\mathfrak p}$ for $\mathfrak p$ of height $\leq 1$ in $\mathcal Z$, which, in effect, happens in the
proof.

(b) For any fixed $\mathfrak m$, the formulation of Theorem \ref{Theorem4.2}
is (deliberately) similar to Geck's triangularization theorem \cite[Thm. 4.4.1]{GJ11}. The latter result uses a ``unipotent support'' function, an extension
of Lusztig's $a$-function, whereas we use a general height function. The context in \cite{GJ11} has the virtue of
relevance to unipotent conjugacy classes and character families, while ours has a similar relevance to
generalized $q$-Schur algebras.
 \end{rems}

\section{Bad primes and standardly stratified algebras.} We again use the notation of \S4, as introduced above Theorem 
\ref{newlemma},
and again let $A^\dagger$ denote the algebra $\End_{H}(T^\dagger)$  over $\sZ:=\mathbb Z[t,t^{-1}]$ (also discussed in \S4). Thus, Lemma
\ref{littlelemma} is available. Our aim in this section, is to give cruder versions of Theorems \ref{QHA} and \ref{Theorem4.2}
over $\mathcal Z$, rather
than $\sZ^\natural$, without any preference for ``good primes.'' Also, unequal parameters are allowed as in the
setting of \cite[Thm. 4.9]{DPS17a}.

For $\omega,\omega'\in\Omega$, define a pre-order
$$\omega\preceq\omega'\iff \htt(\omega)<\htt(\omega'), \text{{\rm or }} \htt(\omega)=\htt(\omega')\text{ {\rm and} }\omega\sim_{LR}\omega'.$$
(The same definition is in the preamble to \cite[Thm. 4.9]{DPS17a}.). Then $(\Omega,\preceq)$ becomes a quasi-poset and $\htt$ remains a height function with respect to $\preceq$.

The following result is stated and  proved in \cite[Thm. 4.9]{DPS17a}. It parallels Theorem \ref{QHA} above.

\begin{thm}\label{GreenThm4.9}  The $\sZ$-algebra $A^\dagger:=\End_{H}(T^\dagger)$ is standardly stratified. In fact,
 it has stratifying system, relative to the quasi-poset $(\Omega,\preceq)$, consisting of all $\Delta(\omega):=\Hom_{H}(S_\omega,T^\dagger)$, with $S_\omega$ ranging over the dual left cell modules.  \end{thm}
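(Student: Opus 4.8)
The plan is to reduce Theorem \ref{GreenThm4.9} to a citation of \cite[Thm. 4.9]{DPS17a}, but since the statement is essentially quoted from that source, the task is really to verify that the hypotheses needed there are in place in our current setup and then to invoke Theorem \ref{Morita 3.10} to extract the standard stratification from the stratifying system. First I would recall the construction of $T^\dagger=\bigoplus_{\omega\in\Omega}X_\omega^{\oplus m_\omega}$ as in \eqref{dagger}, together with properties (1) and (2) recorded immediately above Theorem \ref{newlemma}: each $X_\omega$ lies in $\mathscr A(\mathscr S)$ with smallest nonzero height-filtration term $S_\omega$, and $\Ext^1_{\mathscr E(\mathscr S)}(S_\omega,T^\dagger)=0$ for all $\omega$. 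These are exactly the inputs used in \cite[Thm. 4.9]{DPS17a} to produce the stratifying system $\{\Delta(\omega):=\Hom_H(S_\omega,T^\dagger),\,P(\omega)\}_{\omega\in\Omega}$ for $A^\dagger$-mod relative to $(\Omega,\preceq)$.

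Next I would spell out why $(\Omega,\preceq)$ is a legitimate quasi-poset and why $\htt$ is a height function for it: transitivity and reflexivity of $\preceq$ are immediate from the definition, and the two defining properties of a height function (strict monotonicity on $\prec$, constancy on $\sim$-classes) follow because within a single $\preceq$-equivalence class the height is constant by construction, while $\omega\prec\omega'$ forces either $\htt(\omega)<\htt(\omega')$ outright or $\omega\sim_{LR}\omega'$ with equal heights, the latter being excluded from strict inequality. I would then check the three axioms of Definition \ref{strat} for the data $\{\Delta(\omega),P(\omega)\}$: axiom (1) is the $\Hom$-vanishing below the order, which comes from the Ext-vanishing property (2) above together with the remark preceding Theorem \ref{newlemma} on how $\Hom_H(-,Y)$ behaves on sequences in $\mathscr E(\mathscr S)$; axiom (2) (every irreducible is a quotient of some $\Delta(\omega)$) follows because $T^\dagger$ contains the regular module $H_H$ as a split summand and $\bigoplus P(\omega)$ is a progenerator; axiom (3) (the $\Delta$-filtration of $P(\omega)$) is the content of the height-reordering Proposition \ref{prop2.2} applied to the filtration of $P(\omega)$ coming from $\mathscr A(\mathscr S)$. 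Having assembled the stratifying system, Theorem \ref{Morita 3.10} then yields at once that $A^\dagger$ (or its Morita companion $A'$) is standardly stratified with defining sequence $0=J_0\subseteq J_1\subseteq\cdots\subseteq J_n=A^\dagger$, the sections $J_i/J_{i-1}$ being direct sums of standard modules of a fixed height; this is already recorded concretely in Lemma \ref{littlelemma}, so I would simply point to that lemma for the explicit ideals $J_j=\Hom_H(T^\dagger/T^\dagger_{N-j},T^\dagger)$ and its verification that each $J_j$ is idempotent, each $A^\dagger/J_j$ is $\sZ$-projective, and each $J_j/J_{j-1}$ is $A^\dagger/J_{j-1}$-projective — which are precisely conditions (0),(1),(2) of Definition \ref{SSI} for the sections.

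The main obstacle, I expect, is not any single hard argument but rather bookkeeping: making sure that the height function used in the stratifying system of \cite[Thm. 4.9]{DPS17a} is compatible with $\preceq$ on $\Omega$ (it is, by the observation that $\htt$ is constant on left cells inside a common two-sided cell and monotone for $\leq^\op_{LR}$), and that the passage from ``stratifying system'' to ``standardly stratified with a specified defining sequence'' via Theorem \ref{Morita 3.10} is stated for $A^\dagger$ itself and not merely a Morita equivalent — here one invokes Remark \ref{rem3.11}(a), noting that $A^\dagger$ is standardly stratified by the transported sequence, with Lemma \ref{littlelemma} confirming directly that the transported ideals $J_j$ in $A^\dagger$ do have standard-stratifying sections. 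Since essentially everything is either quoted or already proved in the excerpt, the proof can be kept short: assemble the stratifying system from properties (1),(2) and Proposition \ref{prop2.2}, cite \cite[Thm. 4.9]{DPS17a}, and appeal to Lemma \ref{littlelemma} for the explicit defining sequence.
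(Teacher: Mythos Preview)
Your proposal is correct and takes essentially the same approach as the paper: the paper's ``proof'' consists solely of the sentence preceding the theorem, namely that the result is stated and proved in \cite[Thm.~4.9]{DPS17a}, with no further argument given. Your elaboration---verifying the quasi-poset structure, checking the stratifying-system axioms via the $\Ext^1_{\mathscr E(\mathscr S)}$-vanishing, and invoking Theorem~\ref{Morita 3.10} and Lemma~\ref{littlelemma} to pass to a defining sequence---is a reasonable unpacking of what that citation entails, though the paper itself offers none of it here.
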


For $\omega\in\Omega$, let $[\omega]$ denote the two-sided cell to which $\omega$ belongs. It will be convenient in the theorem below to let $S[\omega]$ denote the two-sided cell module associated with $[\omega]$, viewed as a left $H$-module.\footnote{modulo
higher cells, using $\leq^{\op}_{LR}$.}  Part (1) of the theorem is implicit in the discussion above Theorem \ref{newlemma}.
The theorem parallels the first three parts of Theorem \ref{Theorem4.2}.

\begin{thm}\label{Theorem5.2} (1) All irreducible $H_K$-modules appear in some $S[\omega]_K$.

(2) Let $\mathfrak m$ be any fixed maximal ideal of $\sZ$. Let $D$ be an irreducible $H(\mathfrak m)$-module.
There is a unique two-sided cell $[\omega]= [\omega](D)$ with $[S[\omega](\mathfrak m):D]\not=0$ and $[S[\nu](\mathfrak m):D]=0$ for any
$\nu$ with $\htt(\nu)\leq\htt(\omega)$ and $[\nu]\not=[\omega]$. 

(3) For $[\omega]=[\omega](D)$ as above, $D$ is in the head of $S[\omega](\mathfrak m).$
\end{thm}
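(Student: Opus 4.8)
The strategy is to reduce Theorem \ref{Theorem5.2} to a statement about the standardly stratified algebra $A^\dagger = \End_H(T^\dagger)$ over $\sZ$ (available by Theorem \ref{GreenThm4.9}), mirroring how the proof of Theorem \ref{Theorem4.2} reduces its assertions to statements about $A^{\dagger\natural}$. First I would recall, as in the proof of Theorem \ref{Theorem4.2}, that the right regular module $H_H$ embeds as a split summand of $T^\dagger$ (by \cite[Rem. 4.10]{DPS17a}), so that, via the corresponding idempotent $e\colon T^\dagger\to H_H\subseteq T^\dagger$, one has $H = eA^\dagger e$, and multiplication by $e$ carries irreducible $A^\dagger_K$-modules to irreducible $H_K$-modules (and, over any residue field, irreducible or zero $H$-modules), with $e\,\mathrm{head}(V)\subseteq \mathrm{head}(eV)$. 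Under this correspondence the set indexing irreducible $H_K$-modules is identified with the set $\Lambda$ indexing irreducible $A^\dagger_K$-modules, and each two-sided cell $[\omega]$ corresponds to a ``block'' of $\Lambda$, namely the set $[\lambda]\subseteq\Lambda$ of $\mu$ with $E_K(\mu)$, $E_K(\lambda)$ lying in the same two-sided-cell ideal $\mathfrak C_i$ of $H_K$. The point is that $S[\omega]$, viewed through this dictionary, becomes (up to base change and the functor $\Hom_H(-,T^\dagger)$) the direct sum of the standard modules $\Delta(\nu)$ over left cells $\nu$ inside $[\omega]$; equivalently, $S[\omega]$ corresponds to the section $J_j/J_{j-1}$ of the defining sequence of Lemma \ref{littlelemma} restricted to the ``two-sided-cell component'' $\bigoplus_{\nu\in[\omega]}e_\nu$, where $j = N+1-\htt(\omega)$.

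**Key steps.** With that setup, part (1) is immediate: it is just the statement, already made above Theorem \ref{newlemma} and underlying axiom (2) of the stratifying system, that every irreducible $H_K$-module is a composition factor of some $S[\omega]_K$ — indeed $H_K = \bigoplus_\omega S(\omega)_K$ as left modules and $H_K$ decomposes as $\mathfrak C_1\times\cdots\times\mathfrak C_r$ over the two-sided cells, so the two-sided-cell modules exhaust all irreducibles. For part (2), fix a maximal ideal $\mathfrak m$ and an irreducible $H(\mathfrak m)$-module $D$. Lift $D$ to an irreducible $A^\dagger(\mathfrak m)$-module $\widehat D$ with $e\widehat D\cong D$ (possible since $H(\mathfrak m)=eA^\dagger(\mathfrak m)e$ and $A^\dagger(\mathfrak m)e$ is a progenerator-type object, exactly as in the proof of Theorem \ref{Theorem4.2}). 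Let $j$ be the smallest index with $[(J_j/J_{j-1})(\mathfrak m):\widehat D]\neq 0$; since $J_j/J_{j-1}$ is a direct sum of $\Delta(\nu)$'s with $\htt(\nu)=N+1-j$ (Lemma \ref{littlelemma} and the proof of Theorem \ref{newlemma}), and since $\widehat D$ is then killed by $J_{j-1}$ and appears in the head of $(J_j/J_{j-1})(\mathfrak m)$ (because it does not appear in $(A^\dagger/J_j)(\mathfrak m)$), there is a left cell $\omega$ with $\htt(\omega)=N+1-j$ such that $\widehat D$ occurs in $\Delta(\omega)(\mathfrak m)$. Translating back through $e$, $D$ occurs in $S[\omega](\mathfrak m)$, and the minimality of $j$, together with the fact that the $\Delta(\nu)$ for $\nu$ of strictly larger height (smaller $j'$) are precisely the sections killed earlier, forces $[S[\nu](\mathfrak m):D]=0$ for all $\nu$ with $\htt(\nu)<\htt(\omega)$; for $\nu$ with $\htt(\nu)=\htt(\omega)$ but $[\nu]\neq[\omega]$, the vanishing comes from the block decomposition $J_j/J_{j-1}=\bigoplus_{\text{two-sided cells of height }j'} (\cdots)$ into $H_K$-central components, each a separate two-sided-cell module — $\widehat D$ lies in exactly one such component. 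Uniqueness of $[\omega]$ follows. Part (3) is then the statement that $D\in\mathrm{head}(S[\omega](\mathfrak m))$, which follows from $\widehat D\in\mathrm{head}(\Delta(\omega)(\mathfrak m))$ together with $e\,\mathrm{head}\subseteq\mathrm{head}(e\,-)$.

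**The main obstacle.** The delicate point — and the step I would write most carefully — is the precise dictionary between the two-sided-cell module $S[\omega]$ as a left $H$-module (defined ``modulo higher cells, using $\leq^{\op}_{LR}$'', per the footnote) and the direct sum of left-cell standard modules $\Delta(\nu)$, $\nu\in[\omega]$, sitting inside the section $J_j/J_{j-1}$ of $A^\dagger$. One must verify that the height-filtration sections of $T^\dagger$ (and hence of $A^\dagger$) restricted to a single two-sided cell recover, after applying $e$ and base-changing to $K$, exactly $S[\omega]_K$, and that this is compatible with the block decomposition of $H_K$. This is essentially bookkeeping with the constructions of \cite{DPS17a} and the cell theory of \cite{Lus03}, but it is where the argument could go wrong if the left-cell-to-two-sided-cell passage is mishandled; in particular one needs that $\htt$ is constant on left cells within a two-sided cell (noted in \S4) so that the grouping by $j$ is consistent with grouping by $[\omega]$. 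Everything else is a routine transcription of the corresponding steps in the proof of Theorem \ref{Theorem4.2}, now using Theorem \ref{GreenThm4.9} in place of Theorem \ref{QHA} and making no good-prime assumptions, so that no split or quasi-hereditary hypotheses are invoked — only the standardly stratified structure and Lemma \ref{littlelemma}.
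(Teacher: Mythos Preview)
Your overall strategy matches the paper's: pass to $A^\dagger$ via the idempotent $e$ with $eA^\dagger e\cong H$, lift $D$ to an irreducible $A^\dagger(\mathfrak m)$-module $L$ with $eL\cong D$, and use the identification $e\Delta(\omega)\cong S(\omega)$ (the paper makes this explicit via $\Hom_H(S_\omega,H_H)\cong\Hom_{\sZ}(S_\omega,\sZ)\cong S(\omega)$). Parts (1) and (3) are handled essentially as in the paper.

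There is, however, a genuine gap in your treatment of part~(2), precisely at the case $\htt(\nu)=\htt(\omega)$ with $[\nu]\neq[\omega]$. You assert that $J_j/J_{j-1}$ decomposes into ``$H_K$-central components'' indexed by the two-sided cells of height $N+1-j$, and that $\widehat D$ lies in exactly one such component. But the central idempotents of $H_K$ cutting out two-sided cells do not in general lie in $H$ over $\sZ$ (they involve bad-prime denominators), so they do not give a decomposition of $A^\dagger$- or $A^\dagger(\mathfrak m)$-modules. The direct sum $J_j/J_{j-1}=\bigoplus_\nu\Delta(\nu)$ can of course be regrouped by two-sided cell, but nothing you have written rules out the same irreducible $\widehat D$ occurring as a composition factor of both $\Delta(\nu_1)(\mathfrak m)$ and $\Delta(\nu_2)(\mathfrak m)$ with $[\nu_1]\neq[\nu_2]$ at the same height. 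Your ``main obstacle'' paragraph worries about the $S[\omega]$--$\Delta$ dictionary, but the real difficulty is this uniqueness across cells at a fixed height, and the defining sequence $\{J_j\}$ from Lemma~\ref{littlelemma} (which sees only $\htt$) cannot resolve it.

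The paper avoids this by not routing through $\{J_j\}$ at all. Instead it applies the stratifying-system axioms directly with respect to the \emph{refined} quasi-poset $(\Omega,\preceq)$ of Theorem~\ref{GreenThm4.9}: choose $\omega$ with $\Hom_{A^\dagger}(\Delta(\omega),L)\neq 0$; whenever $L$ is a composition factor of $\Delta(\nu)(\mathfrak m)$ one has $\Hom_{A^\dagger}(P(\omega),\Delta(\nu))\neq 0$, hence $\omega\preceq\nu$ by axiom~(1). By the definition of $\preceq$, this forces either $\htt(\omega)<\htt(\nu)$ or $\omega\sim_{LR}\nu$ (i.e.\ $[\omega]=[\nu]$), which is exactly the required uniqueness. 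The two-sided-cell information is carried by the pre-order $\preceq$, not by any block decomposition. An equivalent repair of your argument would be to refine $\htt$ so that distinct two-sided cells receive distinct values, making the same-height case vacuous; but this is just another way of using $\preceq$.
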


\begin{proof} It suffices to prove statements (2) and (3): We recall from \cite[Rem. 4.10]{DPS17a} that the
right $H$-module $H_H$ is a direct summand of $T^\dagger$. Let $e\in A^\dagger=\End_H(T^\dagger)$ be the projection $T^\dagger\to H_H\subseteq T^\dagger$.   Thus, $eA^\dagger e\cong H$. Also, any $eA^\dagger e$-module $e\Delta(\omega)$
identifies with the left $H$-module $S(\omega)$ using the isomorphisms 
$$\Hom_H(S_\omega,H_H)\cong \Hom_\sZ(S_\omega,\sZ)\cong S(\omega).$$
Here the left isomorphism follows from the bilinear form  induced identification
 $H_H  \cong \Hom_\sZ({_HH},\sZ)$ and the general isomorphism $\Hom_H(S_\omega,\Hom_{\sZ}({_H}H,\sZ))
 \cong \Hom_{\sZ}(S_\omega\otimes_HH,\sZ)$. 

Let $D$ be a given irreducible $H(\mathfrak m)$-module; it may also be viewed an irreducible left $H$-module. Viewing $H$ as
$eA^\dagger e$, there is a unique irreducible $A^\dagger$ module $L$ such
that $eL\cong D$. (This is easily argued by passing first to $A^\dagger(\mathfrak m)$, then to a simple algebra
 factor of its semisimple head.) Since $\{\Delta(\omega)\}_{\omega\in\Omega}$ is a stratifying
system,  there is a  $\omega\in\Omega$ with $\Hom_{A^\dagger}(\Delta(\omega),L)\not=0$. Also,
if $eL$ is a composition factor of $e\Delta(\nu)$ for some $\nu\in\Omega$, then $\omega\preceq\nu$, since
$\Hom_{A^\dagger}(P(\omega),\Delta(\nu))\not=0$. If the two-sided cells $[\omega]$ and $[\nu]$ are unequal,
 then $\htt(\nu)> \htt(\omega)$ by construction
of $\preceq$. This proves (2). Also, we get (3), since $e(\text{\rm head}(\Delta(\omega)))$
is contained in the head of $e\Delta(\omega).$ 
\end{proof}

\end{document}